\title{A Stochastic Gradient Method with Mesh Refinement for PDE Constrained Optimization under Uncertainty}
\author{Caroline Geiersbach\thanks{Department of Statistics and Operations Research, University of Vienna, A-1030 Vienna, Austria 
  (\texttt{caroline.geiersbach@univie.ac.at}).}
\and Winnifried Wollner\thanks{Fachbereich Mathematik, Technische Universit\"at Darmstadt, 64293 Darmstadt, Germany, 
  (\texttt{wollner@mathematik.tu-darmstadt.de})}}
\newcommand{\R}{\mathbb{R}}
\newcommand{\N}{\mathbb{N}}
\newcommand{\E}{\mathbb{E}}
\newcommand{\pP}{\mathbb{P}}
\newcommand{\D}{\mathop{}\!\mathrm{d}}
\newcommand{\U}{\mathcal{U}}
\newcommand{\tu}{\tilde{u}}
\newcommand{\Uad}{\mathcal{U}^{\text{ad}}}
\newcommand{\Dad}{D_{\text{ad}}}
\DeclareMathOperator*{\argmin}{arg\,min}
\DeclareMathOperator*{\esssup}{ess\,sup}
\numberwithin{equation}{section}
\newtheorem{thm}{thm}[section]
\newtheorem{lemma}[thm]{Lemma}
\newtheorem{definition}[thm]{Definition}
\newtheorem{assumption}[thm]{Assumption}
\theoremstyle{definition}
\newtheorem{remark}[thm]{Remark}
\begin{document}
\maketitle
\begin{abstract}
Models incorporating uncertain inputs, such as random forces or material parameters, have been of increasing interest in PDE-constrained optimization. In this paper, we focus on the efficient numerical minimization of a convex and smooth tracking-type functional subject to a linear partial differential equation with random coefficients and box constraints. The approach we take is based on stochastic approximation where, in place of a true gradient, a stochastic gradient is chosen using one sample from a known probability distribution. Feasibility is maintained by performing a projection at each iteration. In the application of this method to PDE-constrained optimization under uncertainty, new challenges arise. We observe the discretization error made by approximating the stochastic gradient using finite elements. Analyzing the interplay between PDE discretization and stochastic error, we develop a mesh refinement strategy coupled with decreasing step sizes. Additionally, we develop a mesh refinement strategy for the modified algorithm using iterate averaging and larger step sizes. The effectiveness of the approach is demonstrated numerically for different random field choices.
\end{abstract}

\section{Introduction}
In this paper, we are concerned with the numerical solution of a convex
optimization problem with convex constraints and an elliptic partial
differential equation (PDE) subject to uncertainty.  In applications,
the material coefficients and external inputs might not be known
exactly. They can then be modeled to be distributed according to a known probability distribution. When the number of possible
scenarios in the probability space is small, then the optimization
problem can be solved over the entire set of scenarios. This approach
is not relevant for most applications, as it becomes intractable if
the source of uncertainty contains more than a few scenarios. Solvers for problems with random PDEs generally use either a discretization of the stochastic space or rely on sampling. Methods with a discretized stochastic space include the stochastic Galerkin method \cite{Babuska2004} and sparse-tensor discretization \cite{Schwab2011}. Sample-based approaches involve taking random or carefully chosen realizations of the input parameters; these approaches include Monte Carlo or quasi Monte Carlo methods and stochastic collocation \cite{Babuska2007}. 

In PDE-constrained optimization under uncertainty, there are several main algorithmic approaches. Most approaches involve using deterministic optimization methods in combination with a sampling or discretization scheme for the stochastic space. Stochastic collocation has been combined with multi-grid methods \cite{Borzi2009}, gradient descent and SQP methods \cite{Tiesler2012}, and a trust region method \cite{Kouri2013}. In combination with sparse-grid collocation or low-rank tensors, trust-region methods have been proposed \cite{Kouri2014, Garreis2019+}.  Discretization of both spatial and stochastic spaces have been proposed in \cite{Hou2011}, and with a one-shot approach with stochastic Galerkin finite elements in \cite{Rosseel2012}. All of these methods suffer from the curse of dimensionality -- as the stochastic dimension increases, the number of quadrature points must increase exponentially. 

Sample average approximation, also known as the Monte Carlo method, involves replacing the stochastic integral with a fixed sample of randomly chosen points. While the error in a Monte Carlo estimator decreases as $\mathcal{O}(1/\sqrt{N})$, where $N$ is the number of sampled points, this rate is independent of the stochastic dimension. There are known improvements to substantially improve the slow convergence of this method that have been developed for this problem class, including the multilevel Monte Carlo method \cite{Ali2017} or the quasi Monte Carlo method \cite{Guth2019+}. In the context of approaches independent of the stochastic dimension, it is also worth mentioning the work of \cite{Alexanderian2017} and a following work \cite{Chen2019}, which is fundamentally different from the above approaches; this approach relies on Taylor expansions with respect to the parameter of the parameter-to-objective map.

Recently, stochastic approximation methods have been investigated for
efficiently solving PDE-constrained optimization problems involving
uncertainty \cite{Haber2012, Martin2018, Geiersbach2018}. This
approach has previously been unexploited for PDE-constrained
optimization, even though it is a classical method for solving
stochastic optimization problems dating back to the 1950s
\cite{Robbins1951, Kiefer1952}. The main tool in stochastic
approximation is a stochastic gradient, in place of the true gradient,
to iteratively minimize the expected value over a random
function. This method differs from the approaches mentioned above
  in that it is a fundamentally random method; sampling is performed
  in the course of the optimization procedure, rather than in addition to
  it. Like sample average approximation, it enjoys convergence rates independent of the stochastic dimension.
In \cite{Haber2012}, the authors compare the stochastic
approximation approach with the sample average approximation method
for a fully discrete (both spatially and stochastically)
PDE-constrained optimization problem, but they do not handle
additional constraints or PDE discretization error. A mesh refinement
strategy was presented in \cite{Martin2018}, but only in combination
with step sizes of the form $c/n$; additionally, their
results do not handle the case with additional constraints or with
iterate averaging. Convergence theory with additional constraints in
Hilbert spaces was presented in \cite{Geiersbach2018} along with a
summary of step size rules, both for strongly convex and generally
convex objective functionals; however, PDE discretization error was
not handled in this work. In this work, we will extend the results in
\cite{Geiersbach2018} to incorporate bias by PDE discretization
error. We will see that we can obtain the same convergence theory, with
the same expected error decay, if the discretization accuracy is
steered such that the bias decays fast enough. 

Relying on a priori error estimate for the discretization error, we
provide a rule how the maximal mesh size should be coupled with the
iteration progress. Analogously, one could couple the iteration with
some a posteriori error measure, which has been well investigated for
deterministic PDE-constrained optimization problems, see,
e.g.,~\cite{Rannacher2010, RannacherVexlerWollner:2011}, and including the treatment of inexact discrete solutions~\cite{MeyerRademacherWollner:2015}.

The paper is structured as follows. In section~\ref{section:preliminaries}, the algorithm and notation is presented. In section~\ref{sec:PSG-Efficiency}, efficiency estimates are derived for different step sizes choices. An application to PDE-constrained optimization is introduced in section~\ref{sec:ModelProblem}, and a discretized version of the algorithm is presented. The presented version allows the coupling of step size rules to successive mesh refinement. Convergence orders for the algorithm are presented in Theorem~\ref{thm:main-result}, which is our main result. Experiments supporting the theoretical work are in section~\ref{sec:Experiments}, and we close with final remarks in section~\ref{section:conclusion}.

\section{Preliminaries}
\label{section:preliminaries}
We consider problems of the form
\begin{equation}\label{eq:SAproblem}
 \min_{u \in \Uad} \left\lbrace j(u) = \E[J(u,\xi)] = \int_\Omega J(u,\xi(\omega)) \D\pP (\omega)\right\rbrace,
\end{equation}
where $\Uad$ is a nonempty, closed, and convex subset of a Hilbert
space $(\U, (\cdot,\cdot)_{\U})$. We recall that a probability space is given by a triple $(\Omega, \mathcal{F}, \pP)$, where $\Omega$ represents the sample space, $\mathcal{F} \subset 2^{\Omega}$ is the $\sigma$-algebra of events and $\pP\colon \Omega \rightarrow [0,1]$ is a probability measure defined on $\Omega$. For the random vector $\xi: \Omega \rightarrow \Xi \subset \R^m$, we will often denote a realization of the random vector as simply $\xi \in \Xi.$ It is assumed that for almost every $\omega$, $u \mapsto J(u,\xi(\omega))$ is convex on $\Uad$, making $j$ convex as well.
Additionally, we require that $J:\U \times \Xi \rightarrow \R$ is
$L^2$-Fr\'echet differentiable on an open neighborhood of $\Uad$
according to the following definition, where $L^p(\Omega)$ denotes the
space of all $p$-times integrable real-valued functions with norm
$\lVert f \rVert_{L^p(\Omega)} = (\int_\Omega |f(\omega)|^p \D
\pP(\omega))^{1/p}$ and $\lVert \cdot \rVert_{\U} = \sqrt{( \cdot,
  \cdot )_{\U}}$ denotes the (strictly convex) norm on $\U$.

For the convenience of the reader, we recall the following definition from \cite{Geiersbach2018}.
\begin{definition}
\label{definition-Lp-differentiable}
A $p$-times integrable random functional $J:\U \times \Xi \rightarrow \R$ is called $L^p$-Fr\'echet differentiable at $u$ if for an open set $U \subset \U$ containing $u$ there exists a bounded and linear random operator $A:U \times \Xi \rightarrow \R$ such that $\lim_{h \rightarrow 0} \lVert J(u + h, \xi) - J(u, \xi) + A(u,\xi)h \rVert_{L^p(\Omega)} / \lVert h \rVert_\U = 0$.
\end{definition}
By H\"older's inequality, if $u \mapsto J(u,\cdot)$ is
$L^p$-differentiable and $1 \leq r < p$, then it is also
$L^r$-differentiable with the same derivative. This implies that $j:\U
\rightarrow \R$ is Fr\'echet
differentiable.\footnote{Definition~\ref{definition-Lp-differentiable} with $p=1$ is the
  minimal requirement for allowing the exchange of the derivative and
  the expectation, i.e.,~$\nabla j(u) = \int_{\Omega} \nabla_u
  J(u,\xi(\omega)) \D \pP(\omega)$. A sufficient, but not necessary, condition for this is that (i) $j(v)$ is finite for all $v \in U$ and $u \mapsto J(u,\xi)$ is a.s.~Fr\'{e}chet differentiable at $u$; and (ii) there exists an $\pP$-integrable dominating function $g$ such that for all $v \in U$, $\lVert \nabla_u J(v,\xi)\rVert_{\U} \leq g(\xi)$ a.s.}

The projection onto a closed convex set $\Uad \subset \U$ is denoted by $\pi_{\Uad}:\U \rightarrow \Uad$ and is defined as the function such that
$$ \pi_{\Uad}(u) = \underset{w \in \Uad}{\argmin} \,  \lVert u-w \rVert_{\U}.$$
The projected stochastic gradient (PSG) method, which is studied in this paper, is summarized in Algorithm \ref{alg:PSG_Hilbert}. It relies on a stochastic gradient, or a function $G:\U \times \Xi \rightarrow \U$ such that $G(u,\xi) \approx \nabla \E[J(u,\xi)]$; one choice for $G(u,\xi)$ is $\nabla_u J(u,\xi)$.
\begin{algorithm}
\begin{algorithmic}[1] 
\STATE \textbf{Initialization:} $u^1 \in \U$
\FOR{$n=1,2,\dots$}
\STATE Generate $\xi^n$, independent from $\xi^1, \dots, \xi^{n-1}$, and $t_n >0$.
\STATE $u^{n+1} := \pi_{\Uad}(u^n - {t_n}G(u^n,\xi^n))$.
\ENDFOR
\end{algorithmic}
\caption{Projected Stochastic Gradient (PSG) Method}
\label{alg:PSG_Hilbert}
\end{algorithm}

We recall that a sequence $\{ \mathcal{F}_n\}$ of increasing sub-$\sigma$-algebras of $\mathcal{F}$ is called a filtration. A stochastic process $\{\beta_n\}$ is said to be adapted to the filtration if $\beta_n$ is $\mathcal{F}_n$-measurable for all $n$. If 
$$\mathcal{F}_n = \sigma(\beta_1, \dots, \beta_n),\footnote{The $\sigma$-algebra generated by a random variable $\beta:\Omega \rightarrow \R$ is given by $\sigma(\beta) = \{ \beta^{-1}(B): B \in \mathcal{B}\}$, where $\mathcal{B}$ is the Borel $\sigma$-algebra on $\R$. Analogously, the $\sigma$-algebra generated by the set of random variables $\{ \beta_1, \dots, \beta_n\}$ is the smallest $\sigma$-algebra such that $\beta_i$ is measurable for all $i=1, \dots, n$.}$$
we call $\{ \mathcal{F}_n\}$ the natural filtration. Furthermore, we define for an integrable random variable $\beta:\Omega \rightarrow \R$ the conditional expectation $\E[\beta | \mathcal{F}_n]$, which is itself a random variable that is $\mathcal{F}_n$-measurable and satisfies $\int_A \E[\beta(\omega) | \mathcal{F}_n] \D \pP(\omega) = \int_A \beta(\omega) \D \pP(\omega)$ for all $A \in \mathcal{F}_n$.

We make the similar assumptions on the  gradient as \cite{Geiersbach2018}; for the purposes of this paper, we will focus on the case where $\Uad$ is bounded.

\begin{assumption}\label{assumption:gradient}
Let $\{\mathcal{F}_n \}$ be an increasing sequence of
$\sigma$-algebras and the  sequence of stochastic gradients generated by Algorithm~\ref{alg:PSG_Hilbert} be given by $\{G(u^n,\xi^n)\}$.  For each $n$, there exist $r^n$, $w^n$ with
$$r^n = \E[G(u^n,\xi^n) | \mathcal{F}_n] - \nabla j(u^n), \quad w^n =
G(u^n, \xi^n) - \E[G(u^n, \xi^n) | \mathcal{F}_n],$$
which satisfy the following assumptions: (i) $ u^n$ and $r^n $ are $\mathcal{F}_n$-measurable; (ii) $K_n:=\esssup_{\omega \in \Omega} \lVert r^n(\omega)\rVert_{\U}$ is bounded, i.e.,~\mbox{$\sup_n K_n < \infty$}; (iii) there exists a constant $M > 0$ such that $\E[\lVert G(u,\xi)\rVert_{\U}^2 ] \leq M$ for all $u \in \Uad.$
\end{assumption}

Notice that by construction, $\E[w^n | \mathcal{F}_n] = 0$ and hence no
further assumptions on $w^n$ are needed.

\section{Efficiency Estimates for Stochastic Gradient
  Methods}\label{sec:PSG-Efficiency}
To obtain efficiency estimates, we let $u$ be an optimal solution of \eqref{eq:SAproblem} and $g^n = G(u^n,\xi^n)$. Since $u \in \Uad$, $\pi_{\Uad}(u) = u.$ Thus, the nonexpansivity of the projection operator yields
 \begin{equation}\label{eq:boundsequence}
 \begin{aligned}
\lVert u^{n+1} - u \rVert_{\U}^2 &= \lVert \pi_{\Uad}(u^n - t_n g^n) - \pi_{\Uad}(u) \rVert_{\U}^2 \\
&\leq  \lVert u^n - t_n g^n - u \rVert_{\U}^2\\
& = \lVert u^n - u \rVert_{\U}^2 - 2 t_n ( u^n - u, g^n )_{\U} + t_n^2 \lVert g^n \rVert_{\U}^2.
\end{aligned}
 \end{equation}
Since $\xi^n$ is independent from $\xi^1, \dots, \xi^{n-1}$, it follows that
\begin{equation}
\label{eq:gradient-martingale-to-mean}
 \E[\lVert g^n \rVert_{\U}^2 | \mathcal{F}_n] = \E[\lVert G(u^n,\xi)\rVert_{\U}^2] \leq M.
\end{equation}
By Assumption~\ref{assumption:gradient}, $g^n = \nabla j(u^n) + w^n + r^n.$ Since $u^n$ and $ r^n$ are $\mathcal{F}_n$-measurable, it follows that \mbox{$\E[u^n |\mathcal{F}_n ] = u^n$} and $\E[r^n |\mathcal{F}_n ] = r^n$. Note as well that $\E[w^n|\mathcal{F}_n] = 0$ holds. Thus taking conditional expectation with respect to $\mathcal{F}_n$ on both sides of \eqref{eq:boundsequence}, we get
\begin{equation}\label{eq:fundamental-inequality-efficiency}
\begin{aligned}
 &\E[\lVert u^{n+1} - u \rVert_{\U}^2 | \mathcal{F}_n ]  \leq \lVert u^n - u \rVert_{\U}^2 - 2 t_n ( u^n - u, \nabla j(u^n) + r^n )_{\U} + t_n^2 M.
\end{aligned}
\end{equation}
In the following computations, let $e_n^2:=\E[\lVert u^n - u \rVert_{\U}^2]$.

\subsection{Strongly Convex Case}
Notice that
   $$- 2t_n ( u^n - u, r^n )_{\U} \leq 2 t_n (\lVert u^n - u \rVert_{\U}^2 + 1) \lVert r^n \rVert_{\U}$$ 
and the $\mu$-strong convexity of $j$ implies that $( u^n - u, \nabla j(u^n) )_{\U} \geq \mu \lVert u^n - u\rVert_{\U}^2$. Hence, taking expectation on both sides of \eqref{eq:fundamental-inequality-efficiency}, we obtain
 \begin{equation*}
  e_{n+1}^2 \leq e_n^2 (1 - 2 \mu t_n + 2 t_n K_n ) + t_n^2 M + 2 t_n K_n. 
 \end{equation*}
 To ensure convergence of $\{e_n^2 \}$, we require that $\sum_n t_n K_n < \infty$ and $\sum_n t_n^2 < \infty$; see \cite[Theorem 3.6]{Geiersbach2018}. We use for some later to be determined $K,\nu,\theta > 0$ the ansatz
\begin{equation}\label{eq:Kstrongly-convex}
  K_n \leq \frac{K}{n + \nu}, \quad t_n = \frac{\theta}{n+\nu},
\end{equation}
resulting in the inequality
\begin{equation}
 \label{eq:recursion-strongly-convex}
e_{n+1}^2 \leq e_n^2 \left(1 - \frac{ 2 \mu \theta}{n +\nu}  + \frac{2\theta K}{(n+\nu)^2} \right) + \frac{\theta^2 M +2 \theta K}{(n+\nu)^2}.
\end{equation}

\begin{lemma}\label{lemma:recursion-stronglyconvex}
For a recursion of the form
\begin{equation}
\label{eq:recursion-relation}
 e_{n+1}^2 \leq e_n^2 \left(1 - \frac{c_1}{n+\nu} +\frac{c_2}{(n+\nu)^2}\right)+\frac{c_3}{(n+\nu)^2},
 \end{equation}
if $e_1^2, c_2, c_3 \geq 0$, $c_1 > 1$, and $\nu+1 \geq \frac{c_2}{c_1-1}$,
it follows that
\begin{equation}\label{eq:recursion-formula}
e_n^2 \leq \frac{\rho}{n+\nu},
\end{equation}
where 
$$\rho:=\max \Big\lbrace (1+\nu)e_1^2,  \frac{-c_3(1+\nu)}{(1+\nu)(1-c_1)+c_2}\Big\rbrace.$$ 
\end{lemma}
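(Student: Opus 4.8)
The plan is to prove $e_n^2 \le \rho/(n+\nu)$ by induction on $n$, exploiting that $\rho$ is defined as a maximum of two quantities that play distinct roles: the first term $(1+\nu)e_1^2$ seeds the base case, while the second sustains the induction. For the base case $n=1$, I would simply note that $\rho \ge (1+\nu)e_1^2$ gives $e_1^2 \le \rho/(1+\nu)$ directly, so nothing beyond the definition of $\rho$ is required.

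For the inductive step, assume $e_n^2 \le \rho/(n+\nu)$ and abbreviate $m := n+\nu$. Inserting this bound into the recursion \eqref{eq:recursion-relation}, the target becomes
\[
\frac{\rho}{m}\Bigl(1 - \frac{c_1}{m} + \frac{c_2}{m^2}\Bigr) + \frac{c_3}{m^2} \le \frac{\rho}{m+1}.
\]
Multiplying through by $m^3$ and using $\rho/m - \rho/(m+1) = \rho/(m(m+1))$, together with the identity $m^2/(m+1) = m - m/(m+1)$, I would reduce this to the single affine inequality
\[
f(m) := m\,[\rho(1-c_1) + c_3] + \rho c_2 \le \frac{\rho m}{m+1},
\]
whose right-hand side is nonnegative. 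It therefore suffices to establish $f(m)\le 0$ for all $m \ge 1+\nu$.

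Since $f$ is affine in $m$, this follows from two observations: (i) $f$ is nonincreasing, i.e.\ $\rho(1-c_1)+c_3 \le 0$, equivalently $\rho \ge c_3/(c_1-1)$; and (ii) $f(1+\nu)\le 0$. Both are consequences of the lower bound $\rho \ge -c_3(1+\nu)/D$ built into the definition of $\rho$, where $D := (1+\nu)(1-c_1)+c_2$. The hypothesis $\nu+1 \ge c_2/(c_1-1)$ forces $D \le 0$, so multiplying this lower bound by $D$ (and reversing the inequality) yields $\rho D + (1+\nu)c_3 \le 0$, which is exactly $f(1+\nu)\le 0$; chaining with $c_2 \ge 0$ then upgrades the same bound to $\rho \ge c_3/(c_1-1)$, giving (i). This is precisely where the hypothesis relating $\nu$ to $c_1$ and $c_2$ is consumed.

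The step I expect to be the main obstacle is not this algebra but the \emph{direction} of the substitution in the inductive step: replacing $e_n^2$ by its upper bound $\rho/m$ inside the recursion is only legitimate when the multiplier $A_m := 1 - c_1/m + c_2/m^2$ is nonnegative, since otherwise the inequality reverses. I would therefore need to verify that $A_m \ge 0$ for every $m \ge 1+\nu$, i.e.\ that $1+\nu$ lies beyond the larger root of $m^2 - c_1 m + c_2$. This is the delicate point, and the place where one must confirm that the assumed relation between $\nu$, $c_1$, and $c_2$ (strengthening it if necessary) guarantees the recursion can be propagated at \emph{every} index, rather than only for sufficiently large $n$.
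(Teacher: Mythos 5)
Your proposal is essentially the paper's own argument. The base case is identical, and your affine inequality $f(m)\le 0$ is exactly what the paper verifies: multiplying the paper's key claim
\[
\Bigl(\frac{\hat n(1-c_1)+c_2}{\hat n^3}\Bigr)\rho+\frac{c_3}{\hat n^2}\le 0
\]
by $\hat n^3$ gives $\hat n\,[\rho(1-c_1)+c_3]+\rho c_2\le 0$, i.e.\ $f(\hat n)\le 0$, while the paper's other ingredient, $\hat n^3\ge \hat n(\hat n-1)(\hat n+1)$, accounts precisely for the nonnegative term $\rho m/(m+1)$ that you discard. The only difference is how $f(m)\le 0$ is established: the paper substitutes the lower bound $\rho\ge -c_3(1+\nu)/D$, with $D:=(1+\nu)(1-c_1)+c_2$, into the term whose $\rho$-coefficient is nonpositive, whereas you use that $f$ is affine with nonpositive slope and $f(1+\nu)\le 0$. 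Your derivations of both facts from the definition of $\rho$ and $c_2\ge 0$ are correct (both proofs tacitly need $D<0$; at $D=0$ the definition of $\rho$ itself degenerates, an edge case you share with the paper).

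The obstacle you flag at the end is, however, a genuine gap --- not in your write-up specifically, but in the lemma and in the paper's proof of it. The paper simply asserts that $\nu+1\ge c_2/(c_1-1)$ implies $1-c_1/\hat n+c_2/\hat n^2>0$; this implication is false, since $c_2/(c_1-1)$ need not lie beyond the larger root of $x^2-c_1x+c_2$. For example, $c_1=10$, $c_2=0$, $\nu=0$ satisfies the hypothesis, yet the multiplier at $n=2$ equals $1-5=-4$. Worse, the lemma as stated actually fails without a further assumption: with $c_1=10$, $c_2=0$, $c_3=1$, $\nu=0$, $e_1^2=0$, the nonnegative sequence $e_1^2=0$, $e_2^2=1/16$, $e_n^2=0$ for $n\ge 3$ satisfies \eqref{eq:recursion-relation} for every $n$ (the right-hand side equals $1$ at $n=1$, equals $0$ at $n=2$, and equals $1/(n+\nu)^2\ge 0$ thereafter), yet $\rho=1/9$, so the claimed bound $e_2^2\le\rho/2=1/18<1/16$ is violated. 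Your closing caveat is therefore exactly on target: the hypothesis must be strengthened so that the multiplier is nonnegative for all $m\ge 1+\nu$; requiring in addition $\nu+1\ge c_1$ suffices, since then $m^2-c_1m+c_2\ge c_2\ge 0$ on that range. With this extra condition your inductive step (and the paper's) closes at every index; in the application it merely adds the requirement $\nu\ge 2\mu\theta-1$ in Theorem~\ref{thm:strongly-convex-step-size-bias-rule} and Theorem~\ref{thm:main-result}, which does not change the stated convergence rates.
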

\begin{proof}
We show \eqref{eq:recursion-formula} by induction. The statement for
$n=1$ is clearly satisfied since $e_1^2 = \frac{\nu+1}{\nu+1} e_1^2 \leq \frac{\rho}{\nu+1}.$

For $n>1$, we assume that \eqref{eq:recursion-formula} holds for
$n$. We abbreviate $\hat{n}:=n+\nu$ and since $\nu+1 \ge
\frac{c_2}{c_1-1}$, we have
\[
  1 - \frac{c_1}{\hat{n}} +\frac{c_2}{\hat{n}^2} > 0.
\]
Thus by \eqref{eq:recursion-relation} and
\eqref{eq:recursion-formula}, we get 
\begin{equation}\label{eq:lem:recursion-formula}
  \begin{aligned}
    e_{n+1}^2 &\leq  \left(1 - \frac{c_1}{\hat{n}} +\frac{c_2}{\hat{n}^2}\right) \frac{\rho}{\hat{n}}+\frac{c_3}{\hat{n}^2}\\
    & = \left( \frac{\hat{n}^2-\hat{n}}{\hat{n}^3}\right)\rho+ \left(\frac{\hat{n}(1-c_1)+c_2}{\hat{n}^3} \right)\rho+\frac{c_3}{\hat{n}^2}\\
    & \leq \frac{\rho}{\hat{n}+1}.
  \end{aligned}
\end{equation}
In the last inequality, we used the fact that $\hat{n}^3 \geq
\hat{n}(\hat{n}-1)(\hat{n}+1)$ and the fact that for all $n \in \N$ and
$\hat{n} = n+\nu$, the relation
$$\left(\frac{\hat{n}(1-c_1)+c_2}{\hat{n}^3}
\right)\rho+\frac{c_3}{\hat{n}^2}\leq
\left(\frac{\hat{n}(1-c_1)+c_2}{\hat{n}^3}
\right)\frac{-c_3(1+\nu)}{(\nu+1)(1-c_1)+c_2} +\frac{c_3}{\hat{n}^2}$$
is true,
since the factor in front of $\rho$ is negative by assumption on
$\nu$, i.e., $(\nu+1)(1-c_1) + c_2 \leq 0$.
Further, we calculate 
\begin{align*}
  \left(\frac{\hat{n}(1-c_1)+c_2}{\hat{n}^3} \right)\frac{-c_3(1+\nu)}{(1+\nu)(1-c_1)+c_2} +\frac{c_3}{\hat{n}^2} &\leq 0 \\
\Leftrightarrow -c_3(1+\nu)[(n+\nu)(1-c_1)+c_2] &\geq -c_3(n+\nu)[(1+\nu)(1-c_1)+c_2]\\
\Leftrightarrow (1+\nu) &\leq (n+\nu), 
\end{align*}
thus showing~\eqref{eq:lem:recursion-formula}.
\end{proof}

Summarizing the above derivation, we obtain the following
convergence theorem.
\begin{thm}
\label{thm:strongly-convex-step-size-bias-rule}
If $j$ is $\mu$-strongly convex and $\theta$ and $\nu$ are chosen such that $\theta > 1/(2\mu)$ and $\nu \geq 2\theta K/(2\mu \theta -1) -1$, then 
\begin{equation}
\label{eq:efficiency-iterates-strongly-convex}
 \E[\lVert u^n - u \rVert_{\U}] \leq \sqrt{\frac{\rho}{n+\nu}}
\end{equation}
with
$$\rho:= \max \left\lbrace (1+\nu) \E[\lVert u^1-u \rVert_{\U}^2], \frac{-(\theta^2 M+ 2\theta K)(1+\nu)}{(1+\nu)(1-2\mu \theta) + 2\theta K}\right\rbrace.$$

If additionally, $\nabla j$ is Lipschitz continuous with constant $L>0$ and $\nabla j(u) = 0$, then
\begin{equation}\label{eq:objectiveerrorbounds}
\E[j(u^n)-j(u)] \leq \frac{L\rho}{2(n+\nu)}. 
\end{equation}
\end{thm}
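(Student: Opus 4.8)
The plan is to recognize that the recursion~\eqref{eq:recursion-strongly-convex} is exactly an instance of the relation~\eqref{eq:recursion-relation} treated in Lemma~\ref{lemma:recursion-stronglyconvex}, so that the first bound follows by invoking that lemma with suitable constants and then passing to the square root via Jensen's inequality; the second bound is then a direct consequence of smoothness of $j$.

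First I would set $c_1 = 2\mu\theta$, $c_2 = 2\theta K$, and $c_3 = \theta^2 M + 2\theta K$, so that~\eqref{eq:recursion-strongly-convex} becomes literally~\eqref{eq:recursion-relation}. I then verify the three hypotheses of Lemma~\ref{lemma:recursion-stronglyconvex}. Nonnegativity of $e_1^2$, $c_2$, and $c_3$ is immediate since $\theta, K, M > 0$. The condition $c_1 > 1$ translates to $2\mu\theta > 1$, which is exactly the assumption $\theta > 1/(2\mu)$. The condition $\nu + 1 \geq c_2/(c_1 - 1)$ reads $\nu + 1 \geq 2\theta K/(2\mu\theta - 1)$, which is precisely $\nu \geq 2\theta K/(2\mu\theta - 1) - 1$; here it is essential that $c_1 - 1 = 2\mu\theta - 1 > 0$ so that dividing by it preserves the inequality direction. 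With all hypotheses in force, Lemma~\ref{lemma:recursion-stronglyconvex} yields $e_n^2 \leq \rho/(n+\nu)$, and substituting the constants into the lemma's expression for $\rho$ reproduces exactly the $\rho$ stated in the theorem, using $e_1^2 = \E[\lVert u^1 - u \rVert_{\U}^2]$.

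Next I would recall that $e_n^2 = \E[\lVert u^n - u \rVert_{\U}^2]$ and apply Jensen's inequality, exploiting concavity of $t \mapsto \sqrt{t}$, to get $\E[\lVert u^n - u \rVert_{\U}] \leq \sqrt{\E[\lVert u^n - u \rVert_{\U}^2]} \leq \sqrt{\rho/(n+\nu)}$, which is~\eqref{eq:efficiency-iterates-strongly-convex}. For the second claim, I would invoke the standard descent inequality for functionals with $L$-Lipschitz gradient, namely $j(v) \leq j(w) + (\nabla j(w), v - w)_{\U} + \tfrac{L}{2}\lVert v - w \rVert_{\U}^2$, applied with $v = u^n$ and $w = u$. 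Since $\nabla j(u) = 0$, the linear term vanishes and the inequality $j(u^n) - j(u) \leq \tfrac{L}{2}\lVert u^n - u \rVert_{\U}^2$ holds for the (random) iterate $u^n$. Taking expectations and using the bound on $e_n^2$ already established gives $\E[j(u^n) - j(u)] \leq \tfrac{L}{2} e_n^2 \leq L\rho/(2(n+\nu))$, which is~\eqref{eq:objectiveerrorbounds}.

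I do not anticipate any genuine obstacle, since the substantive analytic work has already been done in deriving~\eqref{eq:recursion-strongly-convex} and in proving Lemma~\ref{lemma:recursion-stronglyconvex}; this theorem is essentially a matter of bookkeeping. The only point requiring minor care is confirming that the two parameter conditions on $\theta$ and $\nu$ correspond exactly to the hypotheses $c_1 > 1$ and $\nu + 1 \geq c_2/(c_1-1)$ of the lemma under the above identification, and that the positivity of $c_1 - 1$ is used to orient the threshold on $\nu$ correctly.
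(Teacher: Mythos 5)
Your proof is correct and takes essentially the same route as the paper: it identifies \eqref{eq:recursion-strongly-convex} as an instance of Lemma~\ref{lemma:recursion-stronglyconvex} with $c_1 = 2\mu\theta$, $c_2 = 2\theta K$, $c_3 = \theta^2 M + 2\theta K$, and then combines the descent inequality $j(u^n) \le j(u) + \tfrac{L}{2}\lVert u^n - u\rVert_{\U}^2$ (valid since $\nabla j(u)=0$) with the resulting bound. If anything, your write-up is slightly more careful than the paper's terse proof: you make the Jensen step $\E[\lVert u^n-u\rVert_{\U}] \le \sqrt{\E[\lVert u^n-u\rVert_{\U}^2]}$ explicit, and you take expectations of the descent inequality against the second-moment bound $e_n^2 \le \rho/(n+\nu)$ directly, which is the logically clean path, whereas the paper nominally combines it with the first-moment estimate \eqref{eq:efficiency-iterates-strongly-convex}.
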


\begin{proof}
 The estimate \eqref{eq:efficiency-iterates-strongly-convex} is an immediate consequence of \eqref{eq:recursion-strongly-convex} and Lemma~\ref{lemma:recursion-stronglyconvex}. If $\nabla j$ is Lipschitz continuous and $\nabla j(u) = 0$, then it follows that
\begin{equation}\label{eq:strongly-convex-interior-point}
j(u^n) \leq j(u) + \frac{L}{2} \lVert u^n - u \rVert_{\U}^2,
\end{equation}
so combining~\eqref{eq:strongly-convex-interior-point}
with~\eqref{eq:efficiency-iterates-strongly-convex}, we get \eqref{eq:objectiveerrorbounds}.
\end{proof}

\subsection{Convex Case with Averaging}
In the general convex case, or where a good estimate for $\mu$ does
not exist, step sizes of the form $t_n = \theta/n$ may be too small
for efficient convergence. An example is given in \cite{Nemirovski2009} showing that an overestimated strong convexity parameter $\mu$ leads to extremely slow convergence. A significant improvement can be obtained by using larger steps of the order $\mathcal{O}(1/\sqrt{n})$. Then, instead of observing convergence of the sequence $\{ u^n\}$ we observe the convergence of certain averages $\tilde u_i^N$ of the iterates, with
$\gamma_n:=t_n/(\sum_{\ell=i}^N t_\ell)$ and the average of the iterates for some choice of $i$ to $N$ given by
\begin{equation}\label{eq:averagediterates}
\tilde{u}_i^N = \sum_{n=i}^N \gamma_n u^n.
\end{equation}
To derive these estimates, we use \eqref{eq:fundamental-inequality-efficiency} and the fact that
$( u^n - u, \nabla j(u^n) )_{\U} \geq j(u^n) - j(u)$ by convexity of $j$ to get a recursion of the form
\begin{equation}
\label{eq:recursion-convex}
\begin{aligned}
e_{n+1}^2 &\leq e_n^2 (1+2 t_n K_n) - 2t_n \E[j(u^n)-j(u)] + t_n^2 M +2 t_n K_n.
\end{aligned}
 \end{equation}  
Rearranging \eqref{eq:recursion-convex} and summing over $1 \leq i
\leq N$ on both sides,
\begin{equation}
\begin{aligned}
\label{eq:estimate-jerror-convex}
\sum_{n=i}^N t_n \E[j(u^n)-j(u)] &\leq \sum_{n=i}^N \left[ \frac{e_n^2}{2} (1+ 2 t_n K_n) - \frac{e_{n+1}^2}{2} + \frac{t_n^2 M}{2} +  t_n K_n\right]\\ 
&\leq \frac{e_i^2}{2} + \frac{1}{2} \sum_{n=i}^N  \left[ 2 t_n K_n e_n^2 + t_n^2 M +  2 t_n K_n\right].
\end{aligned}
\end{equation}
By convexity of $j$, we have $j(\tu_i^N) \leq \sum_{n=i}^N \gamma_n j(u^n)$ so by \eqref{eq:estimate-jerror-convex}
\begin{equation}
\label{eq:basic-efficiency-estimate-convex}
 \E[j(\tu_i^N)-j(u)] \leq \frac{e_i^2 + \sum_{n=i}^N  [2 t_n K_n e_n^2 + t_n^2 M +  2 t_n K_n] }{2 \sum_{n=i}^N t_n}.
\end{equation}
Set $\Dad: = \sup_{u \in \Uad}  \lVert u_1 - u \rVert_{\U}$. Notice that $e_1^2
\leq \Dad^2$ and $e_i^2 \leq 4 \Dad^2$ since $\lVert u_i - u\rVert_{\U} \leq \lVert u_i - u_1\rVert_{\U} + \lVert u_1 - u \rVert_{\U} \leq 2\Dad$. 
 Thus from \eqref{eq:basic-efficiency-estimate-convex} we get 
\begin{align}
 \E[j(\tu_1^N)-j(u)] &\leq \frac{\Dad^2 + \sum_{n=1}^N \left[ 8 t_n K_n \Dad^2 + t_n^2 M +  2 t_n K_n \right] }{2 \sum_{n=1}^N t_n},\label{eq:estimate-jerror-convex-i1}\\
 \E[j(\tu_i^N)-j(u)] &\leq \frac{4 \Dad^2 + \sum_{n=i}^N \left[ 8 t_n K_n \Dad^2 + t_n^2 M +  2 t_n K_n\right] }{2 \sum_{n=i}^N t_n}, \quad 1 < i \leq N \label{eq:estimate-jerror-convex-i2}.
\end{align}
If $K_n = 0$, then we recover the estimates~\cite[(2.18)]{Nemirovski2009}.

\paragraph{Constant Step Size Policy}
First, observe the case where $t_n = t$ and $i=1$. It follows by \eqref{eq:estimate-jerror-convex-i1} that
$$ \E[j(\tu_1^N)-j(u)] \leq \frac{\Dad^2 + \sum_{n=1}^N  \left[8 t K_n \Dad^2 + t^2 M +  2 t K_n\right] }{2 N t}$$
Minimizing $f(t):= (\Dad^2 + \sum_{n=1}^N \left[ 8 t K_n \Dad^2 + t^2 M +  2 t K_n\right]) /(2 N t)$, we get the step size policy
\begin{equation}\label{eq:stepsizerule-convex-constant}
 t = \frac{\Dad}{\sqrt{M N}},
\end{equation}
which is the same step size rule as one would use where $K_n = 0.$ Plugging \eqref{eq:stepsizerule-convex-constant} into \eqref{eq:estimate-jerror-convex-i1}, we get 
$$\E[j(\tu_1^N)-j(u)] \leq \frac{\Dad \sqrt{M}}{\sqrt{N}} + \frac{4 \Dad^2+1}{N} \sum_{n=1}^N K_n.$$
Hence for convergence with the same speed as in the case $K_n = 0$ it
is sufficient that 
\begin{equation}\label{eq:Kconvex-constant}
  \sum_{n=1}^N K_n \propto \sqrt{N}.
\end{equation}

\paragraph{Variable Step Size Policy}
Alternatively, one can work with the decreasing step size policy for a constant $\theta > 0$
\begin{equation}\label{eq:stepsizerule-convex}
 t_n = \frac{\theta \Dad}{\sqrt{M n}}.
\end{equation}
Plugging \eqref{eq:stepsizerule-convex} into \eqref{eq:estimate-jerror-convex-i2}, we get using the inequalities $$\sum_{n=i}^N \frac{1}{n} \leq \frac{N-i+1}{i}, \quad \sum_{n=i}^N \frac{1}{\sqrt{n}} \geq \frac{N-i+1}{\sqrt{N}}$$
the following estimate for $1 \leq i \leq N$ 
\begin{equation*}
\quad \E[j(\tilde{u}_i^N) - j(u)] \leq \frac{1}{\sqrt{N}} \left[ \frac{2 \Dad N \sqrt{M}}{\theta (N-i+1)} +\frac{(4\Dad^2+1)N}{N-i+1} \sum_{n=i}^N \frac{K_n}{\sqrt{n}} + \frac{\theta \Dad \sqrt{M} N}{2 i} \right].
\end{equation*}
Hence to balance the terms it is suitable to select 
\begin{equation}\label{eq:Kconvex-variable}
  \sum_{n=i}^N \frac{K_n}{\sqrt{n}} \propto 1
\end{equation}
and $i = \lceil\alpha N\rceil$ for some $\alpha \in (0,1)$.

We summarize the convergence rate for iterate averaging in the general convex case in the following theorem.
\begin{thm}
\label{thm:convex-averaging}
If $j$ is convex and iterates are averaged according to \eqref{eq:averagediterates}, then with the constant step size policy \eqref{eq:stepsizerule-convex-constant} and bias $K_n$ satisfying \eqref{eq:Kconvex-constant}, we have
$$\E[j(\tu_1^N)-j(u)] \leq \mathcal{O}\left(\frac{1}{\sqrt{N}}\right).$$
If variable step sizes are chosen according to \eqref{eq:stepsizerule-convex} and bias satisfies \eqref{eq:Kconvex-variable} for $i = \lceil\alpha N\rceil$ and some $\alpha \in (0,1)$, it follows
$$\E[j(\tilde{u}_i^N) - j(u)] \leq \mathcal{O}\left(\frac{1}{\sqrt{N}}\right).$$
\end{thm}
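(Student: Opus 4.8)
The plan is to observe that both assertions follow almost immediately from the efficiency estimates already assembled above; essentially all the analytical work has been done in deriving \eqref{eq:estimate-jerror-convex-i1} and \eqref{eq:estimate-jerror-convex-i2}, and what remains is to substitute the prescribed step sizes and bias decay rates and to check that the residual factors stay bounded.

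For the constant step size policy I would begin from the bound obtained by inserting \eqref{eq:stepsizerule-convex-constant} into \eqref{eq:estimate-jerror-convex-i1}, namely
$$\E[j(\tu_1^N)-j(u)] \leq \frac{\Dad\sqrt{M}}{\sqrt{N}} + \frac{4\Dad^2+1}{N}\sum_{n=1}^N K_n.$$
The first term is already of order $1/\sqrt{N}$. For the second, I would invoke the hypothesis \eqref{eq:Kconvex-constant}, i.e.,~$\sum_{n=1}^N K_n \leq C\sqrt{N}$ for some constant $C>0$, so that the second term is bounded by $(4\Dad^2+1)C/\sqrt{N}$. Both contributions are therefore $\mathcal{O}(1/\sqrt{N})$, which yields the first claim.

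For the variable step size policy I would start from the estimate produced by inserting \eqref{eq:stepsizerule-convex} into \eqref{eq:estimate-jerror-convex-i2},
$$\E[j(\tu_i^N)-j(u)] \leq \frac{1}{\sqrt{N}}\left[\frac{2\Dad N\sqrt{M}}{\theta(N-i+1)} + \frac{(4\Dad^2+1)N}{N-i+1}\sum_{n=i}^N\frac{K_n}{\sqrt{n}} + \frac{\theta\Dad\sqrt{M}N}{2i}\right].$$
The decisive step is to control the two ratios $N/(N-i+1)$ and $N/i$ uniformly in $N$ through the choice $i=\lceil\alpha N\rceil$: since $\alpha N \leq i \leq \alpha N + 1$, one has $N/i \leq 1/\alpha$ and $N-i+1 \geq (1-\alpha)N$, whence $N/(N-i+1)\leq 1/(1-\alpha)$ for all sufficiently large $N$. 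Combining these bounds with the bias hypothesis \eqref{eq:Kconvex-variable}, i.e.,~$\sum_{n=i}^N K_n/\sqrt{n} \leq C'$ for a constant $C'>0$, each of the three summands inside the bracket is bounded by a constant depending only on $\alpha$, $\theta$, $\Dad$, $M$, and $C'$. Hence the bracket is $\mathcal{O}(1)$ and the whole estimate is $\mathcal{O}(1/\sqrt{N})$, establishing the second claim.

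I do not anticipate a genuine obstacle here, since the heavy lifting was completed in the derivations leading to \eqref{eq:estimate-jerror-convex-i1} and \eqref{eq:estimate-jerror-convex-i2}. The only point requiring care is the bookkeeping of the ceiling in $i=\lceil\alpha N\rceil$, namely ensuring $N-i+1\geq 1$ so the denominators never vanish, while verifying that the two ratios converge to the finite limits $1/(1-\alpha)$ and $1/\alpha$ so that the $\mathcal{O}(1)$ conclusion for the bracketed expression is legitimate.
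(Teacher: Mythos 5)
Your proposal is correct and takes essentially the same route as the paper: there the theorem is stated as a summary of the derivations culminating in \eqref{eq:estimate-jerror-convex-i1} and \eqref{eq:estimate-jerror-convex-i2}, followed by exactly the substitutions of \eqref{eq:stepsizerule-convex-constant} and \eqref{eq:stepsizerule-convex} and the balancing conditions \eqref{eq:Kconvex-constant} and \eqref{eq:Kconvex-variable} that you perform. Your explicit bounds $N/i \le 1/\alpha$ and $N/(N-i+1) \le 1/(1-\alpha)$ for $i=\lceil\alpha N\rceil$ merely spell out bookkeeping the paper leaves implicit (and in fact they hold for all $N$, not only for sufficiently large $N$, since $\alpha N \le i \le \alpha N + 1$).
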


\section{Application to PDE-Constrained Optimization under Uncertainty}\label{sec:ModelProblem}
Let $D \subset \R^2$ be a convex polygonal domain. 
We set $\U = L^2(D)$ and $(\cdot,\cdot)_{\U} = (\cdot,\cdot)_{L^2(D)}$
and use the same notation also for vector-valued functions. Let $\mathcal{Y}^0:=H_0^1(D)$. Further, let $|\cdot|_{H^k(D)}$ and $\lVert \cdot \rVert_{H^k(D)}$ be the seminorm and norm on the Sobolev space $H^k(D)$, respectively; see \cite{Adams2003} for a definition of these norms.  We denote the set of $t$-H\"older continuous functions on $\bar{D}$ with $C^t(\bar{D})$. For $1 \leq p < \infty$, a measure space $(\Xi, \mathcal{X}, P)$ and Banach space $(X,\lVert \cdot \rVert_X)$, the Bochner spaces $L^p(\Xi,X)$ and $L^\infty(\Xi,X)$ are defined as the sets of strongly $\mathcal{X}$-measurable functions $y:\Xi \rightarrow X$ such that
\begin{align}
 \lVert y \rVert_{L^p(\Xi,X)}&:=\left( \int_{\Xi}\lVert y(\xi) \rVert_X^p \D P(\xi) \right)^{1/p},\\
 \lVert y \rVert_{L^\infty(\Xi,X)}&:=\esssup_{\xi \in \Xi} \lVert y(\xi) \rVert_X
\end{align}
are finite, respectively.

Let $(\Omega, \mathcal{F}, \mathbb{P})$ be a probability space. We consider the constraint, to be satisfied $\mathbb{P}$-a.s., of the form 
\begin{equation}\label{eq:randomPDE}
 \begin{aligned}
  \quad -   \nabla \cdot  (a(x,\omega) \nabla y(x,\omega)) &= u(x), \qquad x \in D, \\
   y(x,\omega) &= 0, \phantom{tex}\qquad x \in \partial D,
 \end{aligned}
\end{equation}
where $a: D \times \Omega \rightarrow \R$ is a random field representing conductivity on the domain. To facilitate simulation, we will make a standard finite-dimensional noise assumption, meaning the random field has the form 
  $$a(x,\omega) = a(x,\xi(\omega)) \quad \text{ in } D \times \Omega$$
where $\xi(\omega) = (\xi_1(\omega), \dots, \xi_m(\omega))$ is a vector of real-valued uncorrelated random variables $\xi_{i}:\Omega \rightarrow \Xi_i \subset\R$.\footnote{We use $\xi_i$ to denote the $i^{\text{th}}$ element of the vector $\xi$ and $\xi^n$ to denote the $n^{\text{th}}$ realization of the vector $\xi^n=(\xi_1^n, \dots, \xi_m^n)$.} The support of the random vector will be denoted with $\Xi = \prod_{i=1}^m \Xi_i$ and its probability distribution with $P$. By assumption on $a$, it is possible to reparametrize $y$ as likewise depending on $\xi$, see \cite[Lemma 9.40]{Lord2014}. Therefore, we can associate the random field $y$ with a function $y = y(x,\xi)$ belonging to the space $L^2(\Xi,\mathcal{Y}^0).$ Now, the problem of finding a $u \in \Uad$ bounded by $u_a, u_b \in \U$ such that the corresponding $y \in L^2(\Xi, \mathcal{Y}^0)$ best approximates a target temperature $y^D \in L^2(D)$ with cost $\lambda \geq 0$ is formulated in \eqref{eq:stationaryheatsourceproblemrandom}.
\begin{equation}\label{eq:stationaryheatsourceproblemrandom}
 \begin{aligned}
   \min_{u \in \Uad} \quad \Bigg\lbrace j(u):= \E[ J(u, \xi) ] &:= \E \left[\frac{1}{2} \lVert y  - y^D\rVert_{\U}^2 \right]  + \frac{\lambda}{2} \lVert u \rVert_{\U}^2 \Bigg\rbrace
   \\
   \text{s.t.} \quad -   \nabla \cdot  (a(x,\xi) \nabla y(x,\xi)) &= u(x), \qquad (x,\xi) \in D \times \Xi, \\
   y(x,\xi) &= 0, \phantom{tex}\qquad (x,\xi) \in \partial D \times \Xi,\\
   \Uad:= \{ u \in \U:  \,u_a(x) & \leq u(x) \leq u_b(x)\,\,\text{ a.e. } x\in D\}.
 \end{aligned}
\end{equation}

We will often suppress dependence on $x$ and simply write $a(\xi) = a(\cdot,\xi)$ and $y(\xi) = y(\cdot,\xi)$ for a realization of the random field and temperature, respectively. The random field is subject to the following assumption.
\begin{assumption}
\label{assumption:regularity-random-field}
There exist $a_{\min}, a_{\max}$ such that for almost every $(x,\xi) \in D \times \Xi,$
$0 < a_{\min} < a(x,\xi) < a_{\max} < \infty$.
Additionally, $a \in L^\infty(\Xi,C^t(\bar{D}))$ for some $0<t\leq 1$.
\end{assumption}

\begin{remark}
Assumption~\ref{assumption:regularity-random-field} allows for modeling with log-normal random fields with truncated Gaussian noise, as in for instance \cite{Gunzburger2014} and \cite{Ullmann2012}.  The H\"older condition $a \in L^\infty(\Xi,C^t(\bar{D}))$ is weaker than the typical assumption, where the fields are assumed to be almost surely continuously differentiable with uniformly bounded gradient; see for instance \cite{Babuska2004} and \cite{Martin2018}.
\end{remark}

\begin{lemma}\label{lemma:poissonrandom}
Let Assumption~\ref{assumption:regularity-random-field} be satisfied for some $t \in
(0,1]$. Then there exists some $s_0 \in (0,t]$ such that 
for any $0\le s < s_0$, any $u \in H^{s_0-1}(D)$, and almost every $\xi \in \Xi$ there exists a unique solution
$y(\xi) \in \mathcal{Y}^0\cap H^{1+s}(D)$ to
\begin{equation}
\label{eq:Poissonrandom}
b^\xi(y,v):=\int_D a(x,\xi) \nabla y(x,\xi) \cdot \nabla v(x) \D x = \int_D u(x) v(x) \D x =: (u,v)_{\U}
\end{equation}
for all  $v \in \mathcal{Y}^0$. Moreover, for any such $s$ there exists
  $C_s$ independent of $\xi$ and $u$ such that
\begin{equation} \label{eq:Aprioribounds_Poisson_stochastic}
 \lVert y(\xi)\rVert_{H^{1+s}(D)} \leq C_s \lVert u
 \rVert_{H^{s-1}(D)}.
\end{equation}
Additionally, if $D$ is convex and $t = 1$, then the statement remains
true for $s = s_0 = 1$.
\end{lemma}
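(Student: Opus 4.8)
The plan is to split the statement into well-posedness with the energy estimate (the exponent $s=0$) and the genuine gain of fractional regularity for $0<s<s_0$, keeping track throughout that every constant depends on the coefficient only through the $\xi$-uniform quantities $a_{\min}$, $a_{\max}$, and $\lVert a\rVert_{L^\infty(\Xi,C^t(\bar D))}$ furnished by Assumption~\ref{assumption:regularity-random-field}. First I would fix a realization $\xi$ for which the assumption holds and apply the Lax--Milgram lemma to the bilinear form $b^\xi$ from \eqref{eq:Poissonrandom} on $\mathcal{Y}^0=H_0^1(D)$. Boundedness follows from $a\le a_{\max}$ and coercivity from $a\ge a_{\min}$ together with Poincar\'e's inequality, so the continuity and coercivity constants depend only on $a_{\min}$, $a_{\max}$, and $D$. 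Since $s_0-1\in(-1,0]$ and $H_0^1(D)\hookrightarrow H_0^{1-s_0}(D)$, the right-hand side $v\mapsto(u,v)_{\U}$ is interpreted as the duality pairing and is bounded on $\mathcal{Y}^0$ for $u\in H^{s_0-1}(D)$. This yields the unique solution $y(\xi)$ and the bound $\lVert y(\xi)\rVert_{H^1(D)}\le C\lVert u\rVert_{H^{-1}(D)}\le C\lVert u\rVert_{H^{s_0-1}(D)}$, which is \eqref{eq:Aprioribounds_Poisson_stochastic} at $s=0$, with $C$ independent of $\xi$.

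The main part is the shifted estimate. The approach I would take is a freezing-of-coefficients argument: near each interior or boundary point $x_0$, write the operator as the constant-coefficient operator $-\nabla\cdot(a(x_0,\xi)\nabla\,\cdot\,)$ plus the perturbation $-\nabla\cdot((a(\cdot,\xi)-a(x_0,\xi))\nabla\,\cdot\,)$. On a ball of radius $\rho$ the H\"older bound gives $|a(x,\xi)-a(x_0,\xi)|\lesssim\lVert a\rVert_{C^t}\rho^t$, so the perturbation is small in the relevant fractional norm and can be absorbed, while the frozen problem obeys the classical shift theorem on the polygon, whose threshold is dictated by the largest interior angle of $D$. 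A covering argument then combines the local estimates with the energy bound of the first step to give $\lVert y(\xi)\rVert_{H^{1+s}(D)}\le C_s\lVert u\rVert_{H^{s-1}(D)}$ for every $s$ below the resulting threshold $s_0=\min\{t,s_{\mathrm{geom}}\}\in(0,t]$. Equivalently, one may invoke the fractional regularity theorems for divergence-form operators with H\"older coefficients on polygonal or Lipschitz domains and simply record that their constants are explicit functions of $a_{\min}$, $a_{\max}$, and the H\"older seminorm. Either way, because these quantities are uniformly bounded in $\xi$, both $s_0$ and $C_s$ are independent of $\xi$.

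For the final assertion, when $D$ is convex and $t=1$ the frozen constant-coefficient problem enjoys full $H^2$ regularity on the convex domain; the Lipschitz perturbation is then lower order and can be absorbed, so the threshold $s_0=1$ is attained and, for $u\in H^{s_0-1}(D)=L^2(D)$, one obtains $y(\xi)\in H^2(D)$ with $\lVert y(\xi)\rVert_{H^2(D)}\le C\lVert u\rVert_{L^2(D)}$ uniformly in $\xi$.

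The step I expect to be the main obstacle is the shift estimate, and within it the verification that $C_s$ can be chosen independent of $\xi$: the covering and perturbation steps must be organized so that the final constant involves the coefficient only through $a_{\min}$, $a_{\max}$, and $\lVert a(\cdot,\xi)\rVert_{C^t(\bar D)}$, for which Assumption~\ref{assumption:regularity-random-field} supplies a uniform bound, rather than through a modulus of continuity that could degenerate as $\xi$ ranges over $\Xi$. Citing an off-the-shelf theorem removes the technical covering work but still requires checking this uniform dependence of its constants.
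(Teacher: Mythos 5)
Your proposal is correct in substance, but for the key step it takes a genuinely different route than the paper. The first part coincides: the paper also disposes of existence, uniqueness, and the $s=0$ bound via Lax--Milgram, with constants depending only on $a_{\min}$, $a_{\max}$, and $D$ (your explicit remark that $(u,v)_{\U}$ must be read as a duality pairing for $u \in H^{s_0-1}(D)$ is a detail the paper leaves implicit). For the fractional shift, however, the paper does not carry out any localization: it simply cites Lemma~1 and Theorem~1 of Haller-Dintelmann, Meinlschmidt, and Wollner (2018) to obtain the existence of $s_0$ and the $H^{1+s}$ estimate, and for the convex case with $t=1$ it cites Grisvard's Theorems~3.2.1.2 and 3.1.3.1, noting that inspection of Grisvard's proof gives a bound uniform over convex domains. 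Your freezing-of-coefficients argument is the classical constructive alternative: it makes completely transparent \emph{why} $s_0$ and $C_s$ depend on $\xi$ only through $a_{\min}$, $a_{\max}$, and $\lVert a \rVert_{L^\infty(\Xi, C^t(\bar D))}$ --- precisely the uniformity the paper needs but must extract from the hypotheses of the cited theorems --- at the price of nontrivial technical work (cutoffs and commutators in the covering argument, corner analysis of the frozen operator to identify the geometric threshold, and a multiplier estimate in $H^s$ of the form $\lVert (a - a(x_0)) w \rVert_{H^s} \lesssim \rho^{t-s} \lVert a \rVert_{C^t}^{s/t} \lVert w \rVert_{H^s}$, which is where the strict inequality $s < t$ enters). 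One point deserves emphasis: that fractional absorption mechanism degenerates at the endpoint $s = t = 1$ (the smallness factor $\rho^{t-s}$ becomes $1$), so your convex-case argument necessarily switches to a different mechanism --- expanding $\nabla \cdot ((a - a(x_0))\nabla y)$ so that $\nabla a \cdot \nabla y$ appears as a genuinely lower-order term controlled by the energy bound --- which is in effect a reproof of the Grisvard result the paper invokes; you do state this, so it is not a gap, but if written out in full it is a separate argument, not the $s<t$ case pushed to its limit.
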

\begin{proof}
  It is an immediate consequence of the Lax--Milgram Lemma and the
  bounds on $a(\xi)$ from Assumption~\ref{assumption:regularity-random-field}
  that~\eqref{eq:Poissonrandom} has a unique solution in $\mathcal{Y}^0$
  and~\eqref{eq:Aprioribounds_Poisson_stochastic} holds with $s=0$.
  The existence of $s_0$ and the regularity in $H^{1+s}$ follows  
  from~\cite[Lemma~1 and
  Theorem~1]{HallerDintelmannMeinlschmidtWollner:2018}.

  In the case of a convex
  domain and $t=1$,~\cite[Theorem~3.2.1.2]{Grisvard:1985}
  provides the regularity $y(\xi) \in H^2(D)$ for the solution
  of~\eqref{eq:Poissonrandom}. The a priori
  bound~\eqref{eq:Aprioribounds_Poisson_stochastic} follows
  from~\cite[Theorem~3.1.3.1]{Grisvard:1985} and inspection of the 
  proof of~\cite[Theorem~3.2.1.2]{Grisvard:1985}, showing that the
  bound also remains true for an arbitrary convex domain.
\end{proof}

Note that similar estimates, even with $s_0 = t$, can be shown for smooth
domains, see, e.g.,~\cite[Proposition 3.1]{Charrier2013}. 

Using standard arguments, it can be shown that for $\xi \in \Xi$, the stochastic gradient $\nabla_u J(u,\xi)$ for problem \eqref{eq:stationaryheatsourceproblemrandom} is given by
\begin{equation}\label{eq:stochastic-gradient}
\nabla_u J(u,\xi) = \lambda u - p(\cdot,\xi),
\end{equation}
where $p(\cdot,\xi) \in \mathcal{Y}^0$ solves $b^\xi(v,p) = (y^D-y(\xi),v)_{\U}$ for all $v \in \mathcal{Y}^0$; see \cite{Geiersbach2018}.

\subsection{Discretization}
\label{subsection:discretization}
We now define a discretization
of~\eqref{eq:stationaryheatsourceproblemrandom} by finite elements.
To this end, let $\mathcal T_h$ be a decomposition of $D$ into
shape regular triangles $T$ with $h = \max_{T \in \mathcal T_h}
\operatorname{diam}(T)$, see,
e.g.,~\cite{Ciarlet:1978,BrennerScott:2008}. 

Now, we can define standard $H^1$-conforming finite element spaces,
where $\mathcal{P}_i$ denotes the space of polynomials of degree up to
$i$, 
\begin{align*}
\mathcal{Y}_h &:= \lbrace v \in H^1(D): v|_{T} \in \mathcal{P}_1(T) \text{
  for all } T\in \mathcal T_h \rbrace,\\
\mathcal{Y}_h^0 &:= \mathcal{Y}_h \cap \mathcal{Y}^0
\end{align*}
of piecewise linear finite elements.
For the controls, we choose a discretization of $\U$ by piecewise
constants, i.e., 
\begin{align*}
  \U_h &:= \lbrace u \in \U: v|_{T} \in \mathcal{P}_0(T) \text{
  for all } T\in \mathcal T_h \rbrace,\quad
  \Uad_h = \U_h \cap \Uad.
\end{align*}
Further, we define $P_h\colon \U \rightarrow \U_h$ as the
$L^2$-projection, i.e., for $v \in L^2(D)$ it is 
\[
  P_h(v)\bigl\lvert_T = \frac{1}{|T|}\int_T v\,\mathrm{d}x.
\]

Then the (spatially) discretized version of \eqref{eq:stationaryheatsourceproblemrandom} becomes
\begin{equation}\label{eq:stationaryheatsourceproblemrandom-discretized}
 \begin{aligned}
   \min_{u_h \in \Uad_h} \quad \Bigg\lbrace j_h(u_h)&:= \E[ J_h(u_h, \xi) ] = \E \left[\frac{1}{2} \lVert y_h  - y^D\rVert_{\U}^2 \right] + \frac{\lambda}{2} \lVert u_h \rVert_{\U}^2 \Bigg\rbrace
   \\
    \text{s.t. $\pP$-a.s.} \quad  b^\xi_h(y_h, v_h) &= (u_h,v_h)_{\U} \quad \forall v_h \in \mathcal{Y}_h^0.
 \end{aligned}
\end{equation}
Here $b^\xi_h$ is given by
\[
b^\xi_h(y,v) := \int_{D} I_ha(\xi) \nabla y\cdot \nabla
v \D x
\]
where $I_h$ is either the interpolation into
element wise constants or continuous linear finite elements. 
As it will be useful later, we state some well-known error estimates
for the interpolation.
As it will make calculations more easily accessible, we will use
so-called generic constants $c > 0$ which may have a different value
at each appearance but are independent of all relevant quantities.
\begin{lemma}\label{lem:interpolation-error-a}
  Given Assumption~\ref{assumption:regularity-random-field}, there
  exists a constant $C_r$ such that for almost every $\xi \in \Xi$, the expression
  \[
    \|a(\xi) - I_ha(\xi)\|_{L^\infty(D)} \le C_r h^t 
  \]
  is satisfied for both the interpolation by constants as well as the
  interpolation by piecewise linear functions.
\end{lemma}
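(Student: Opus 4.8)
The plan is to reduce the global $L^\infty$ bound to a local estimate on each triangle $T \in \mathcal{T}_h$ and to invoke the H\"older continuity of $a(\xi)$ directly. The point to keep in mind is that under Assumption~\ref{assumption:regularity-random-field} the field $a(\xi)$ need not be differentiable (we only have $0 < t \le 1$), so the classical interpolation theory based on $H^2$- or $C^2$-regularity is unavailable; the whole argument must be carried out at the level of the H\"older seminorm.

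First I would record the uniform regularity. Since $a \in L^\infty(\Xi, C^t(\bar{D}))$, there is a constant $C_a := \esssup_{\xi \in \Xi} \lVert a(\xi) \rVert_{C^t(\bar{D})} < \infty$; in particular the H\"older seminorm $[a(\xi)]_{C^t} = \sup_{x \ne y} |a(\xi)(x) - a(\xi)(y)| / |x-y|^t$ is bounded by $C_a$ for almost every $\xi$. Every constant produced below will be a multiple of $C_a$ and hence independent of $\xi$ and of $h$, which is exactly the uniformity the lemma demands.

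Next, fix such a $\xi$ and a triangle $T$ with $\operatorname{diam}(T) \le h$. For the piecewise-constant interpolant, $I_h a(\xi)$ equals a nodal value $a(\xi)(x_T)$ on $T$ (e.g.\ the value at the centroid), so for $x \in T$ the H\"older estimate gives $|a(\xi)(x) - a(\xi)(x_T)| \le [a(\xi)]_{C^t} |x - x_T|^t \le C_a h^t$. For the continuous piecewise-linear interpolant I would write $I_h a(\xi)(x) = \sum_i \lambda_i(x)\, a(\xi)(v_i)$ in terms of the barycentric coordinates $\lambda_i$ on $T$ with vertices $v_i$; since the $\lambda_i$ form a partition of unity, $a(\xi)(x) - I_h a(\xi)(x) = \sum_i \lambda_i(x)\bigl(a(\xi)(x) - a(\xi)(v_i)\bigr)$, and bounding each difference by $[a(\xi)]_{C^t}|x-v_i|^t \le C_a h^t$ and summing against $\sum_i \lambda_i(x) = 1$ yields the same local bound.

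Finally, since the pointwise estimate $|a(\xi)(x) - I_h a(\xi)(x)| \le C_a h^t$ holds on every $T$ with one and the same constant, taking the supremum over $x \in D$ gives $\lVert a(\xi) - I_h a(\xi) \rVert_{L^\infty(D)} \le C_a h^t$, and we set $C_r := C_a$. I do not expect a genuine obstacle here: the only point requiring care is to avoid any appeal to derivatives of $a$ and instead exploit the nodal-reproduction/partition-of-unity structure of the two interpolants together with the H\"older seminorm; the uniformity in $\xi$ is then immediate from the $L^\infty(\Xi, \cdot)$ assumption.
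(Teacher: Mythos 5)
Your proof is correct, but it takes a genuinely different route from the paper. The paper does not argue locally by hand: it invokes the general interpolation estimate of \cite[Theorem 4.4.20]{BrennerScott:2008} (with $s=0$, $p=\infty$, $m=0,1$), which is formulated for fractional-order function spaces, and must therefore combine it with function-space identification results --- \cite[Section~4.5.2]{Triebel:1995} for smooth domains and \cite[Example 1.9]{Lunardi:2018} for convex polygons --- to place $C^t(\bar D)$ inside the framework where that theorem applies; the conclusion is $\|a(\xi)-I_ha(\xi)\|_{L^\infty(D)} \le c h^t \|a(\xi)\|_{C^t(D)}$, followed by the same almost-sure bound $\|a(\xi)\|_{C^t(D)} \le \|a\|_{L^\infty(\Xi;C^t(D))}$ that you use. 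Your argument instead works triangle by triangle directly with the H\"older seminorm: nodal reproduction for the piecewise-constant interpolant and the barycentric partition of unity $\sum_i \lambda_i \equiv 1$, $\lambda_i \ge 0$, for the piecewise-linear one, giving $|a(\xi)(x)-I_ha(\xi)(x)| \le [a(\xi)]_{C^t}\,\mathrm{diam}(T)^t$ pointwise. This is more elementary and self-contained: it produces the explicit constant $C_r = \esssup_\xi \|a(\xi)\|_{C^t(\bar D)}$, needs no shape-regularity (only $\mathrm{diam}(T)\le h$), and sidesteps the Besov-space identification machinery entirely; it also covers without change the variant where the elementwise-constant interpolant is the cell average rather than a point value, since $|a(x)-\tfrac{1}{|T|}\int_T a| \le C_a h^t$ by the same seminorm bound. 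What the paper's citation-based route buys is generality: the Brenner--Scott framework delivers the whole family of $W^{m,p}$ interpolation estimates at once, which is the standard toolbox the rest of the discretization analysis draws on, whereas your argument is tailored to this one $L^\infty$ bound.
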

\begin{proof}
We use a well-known interpolation estimate \cite[Theorem 4.4.20]{BrennerScott:2008} with $s =0$, $p = \infty$ and the cases $m=0$ and $m=1$ in combination with \cite[Section~4.5.2]{Triebel:1995} for the case of smooth domains and \cite[Example 1.9]{Lunardi:2018} for the case of convex polygons. Then, it follows that
\[
    \|a(\xi) - I_ha(\xi)\|_{L^\infty(D)} \le ch^t \|a(\xi)\|_{C^t(D)} 
  \]
  with the almost sure bound
  \[
    \|a(\xi)\|_{C^t(D)} \le \|a\|_{L^\infty(\Xi;C^t(D))}.
  \]
\end{proof}

It is then easy to see a representation of the gradient for the
reduced discretized functional $j_h \colon \U_h \rightarrow
\R$. Analogously to \eqref{eq:stochastic-gradient}, one obtains
\begin{lemma}\label{lemma:gradient-discretized}
  For $\xi \in \Xi$ and any $u_h \in \U_h$,
  the stochastic gradient $\nabla_u J_h(u_h,\xi) \in \U_h$ for problem \eqref{eq:stationaryheatsourceproblemrandom-discretized} is given by
$$\nabla_u J_h(u_h,\xi) = \lambda u_h - P_hp_h(\xi),$$
where $p_h(\xi) \in \mathcal{Y}^0_h$ solves the PDE
\begin{equation}\label{eq:Poissonrandom-adjoint-discretized}
b^\xi_h(v_h,p_h(\xi)) = (y^D - y_h(\xi),v_h)_{\U} \quad \forall v_h \in \mathcal{Y}^0_h
\end{equation}
and $P_h$ denotes the $L^2$-projection onto $\U_h$. 
\end{lemma}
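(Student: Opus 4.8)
The plan is to mirror the derivation of the continuous gradient formula in \eqref{eq:stochastic-gradient}, but carried out entirely within the finite element spaces. The object is to differentiate the reduced discrete functional
\[
j_h(u_h) = \E\!\left[\tfrac12\lVert y_h(u_h,\xi)-y^D\rVert_{\U}^2\right] + \tfrac{\lambda}{2}\lVert u_h\rVert_{\U}^2,
\]
where for each fixed $\xi$ the discrete state $y_h(\xi)=y_h(u_h,\xi)\in\mathcal Y_h^0$ is the unique solution of the discrete forward problem $b_h^\xi(y_h,v_h)=(u_h,v_h)_{\U}$ for all $v_h\in\mathcal Y_h^0$. First I would fix $\xi$ and establish that the discrete control-to-state map $S_h^\xi\colon\U_h\to\mathcal Y_h^0$, $u_h\mapsto y_h(\xi)$, is well defined, linear, and bounded: well-posedness of the discrete PDE follows from Lax--Milgram applied to $b_h^\xi$ on $\mathcal Y_h^0$, using that $I_ha(\xi)$ inherits the uniform bounds $a_{\min}\le I_ha(\xi)\le a_{\max}$ from Assumption~\ref{assumption:regularity-random-field} (a nodal or elementwise interpolant of a function bounded between two constants stays between those constants), so $b_h^\xi$ is coercive and bounded uniformly in $\xi$.

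Next I would compute the Fr\'echet derivative of $J_h(\cdot,\xi)$ at $u_h$ in a direction $\delta u_h\in\U_h$. Since $S_h^\xi$ is linear and bounded, its derivative is itself: differentiating the discrete state equation gives that the sensitivity $\delta y_h = (S_h^\xi)'\,\delta u_h$ solves $b_h^\xi(\delta y_h,v_h)=(\delta u_h,v_h)_{\U}$ for all $v_h$. The chain rule then yields
\[
\langle \nabla_u J_h(u_h,\xi),\delta u_h\rangle
= (y_h(\xi)-y^D,\delta y_h)_{\U} + \lambda(u_h,\delta u_h)_{\U}.
\]
To eliminate $\delta y_h$ I introduce the discrete adjoint state $p_h(\xi)\in\mathcal Y_h^0$ defined by \eqref{eq:Poissonrandom-adjoint-discretized}, namely $b_h^\xi(v_h,p_h(\xi))=(y^D-y_h(\xi),v_h)_{\U}$ for all $v_h\in\mathcal Y_h^0$, which is again well posed by the same Lax--Milgram argument applied to the (symmetric) bilinear form $b_h^\xi$. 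Testing the adjoint equation with $v_h=\delta y_h$ and the sensitivity equation with $v_h=p_h(\xi)$, the common term $b_h^\xi(\delta y_h,p_h(\xi))$ lets me substitute $(y_h(\xi)-y^D,\delta y_h)_{\U}=-(\delta u_h,p_h(\xi))_{\U}$, so that
\[
\langle \nabla_u J_h(u_h,\xi),\delta u_h\rangle
= \lambda(u_h,\delta u_h)_{\U} - (p_h(\xi),\delta u_h)_{\U}.
\]

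Finally I would identify this Riesz representer inside $\U_h$. Since $\delta u_h$ ranges only over $\U_h$ and $\nabla_u J_h(u_h,\xi)$ must by definition lie in $\U_h$, the term $(p_h(\xi),\delta u_h)_{\U}$ should be replaced by its $L^2$-projection: for any $\delta u_h\in\U_h$ we have $(p_h(\xi),\delta u_h)_{\U}=(P_h p_h(\xi),\delta u_h)_{\U}$ by the defining property of $P_h$. This gives $\nabla_u J_h(u_h,\xi)=\lambda u_h - P_h p_h(\xi)$, which is the claim; note $u_h\in\U_h$ already, so no projection is needed on the first term. The one point that deserves care, and which I regard as the main subtlety rather than a genuine obstacle, is precisely this projection step: it is what distinguishes the discrete formula from the continuous one in \eqref{eq:stochastic-gradient}, and it is essential to observe that the gradient is understood as an element of $\U_h$ with respect to the $L^2$ inner product, so that testing is restricted to $\U_h$ and $P_h$ appears naturally.
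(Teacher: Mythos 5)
Your proof is correct and follows exactly the route the paper intends: the paper gives no explicit proof of Lemma~\ref{lemma:gradient-discretized}, stating only that the representation follows analogously to \eqref{eq:stochastic-gradient} by standard arguments, and your derivation --- Lax--Milgram for the discrete forward and adjoint problems (using that $I_h a(\xi)$ inherits the bounds $a_{\min}$, $a_{\max}$), the sensitivity/adjoint duality, and identification of the Riesz representer in $\U_h$ --- is precisely that standard adjoint calculus carried out in the discrete spaces. You also correctly isolate the only genuinely discrete ingredient, namely that restricting test directions to $\U_h$ replaces $p_h(\xi)$ by its projection $P_h p_h(\xi)$.
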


We notice that $u_h \in \U_h \subset \U$ and thus one could simply
apply Algorithm~\ref{alg:PSG_Hilbert} to this discrete problem.
However, the gradient of $j$ at $u_h^n$ is 
\begin{align*}
  \nabla j(u_h^n) &=  \lambda u_h^n - \E[p^n(\xi)] \\
  &= \lambda u_h^n - \E[p^n(\xi)] \pm p^n(\xi^n) \pm P_h p_h^n(\xi^n)\\
  &=\underbrace{\lambda u_h^n - P_h p_h^n(\xi^n)}_{\nabla_u
    J_h(u_h^n,\xi^n)} + \underbrace{p^n(\xi^n) - \E[p^n(\xi)]}_{w^n} +
  \underbrace{P_h p_h^n(\xi^n) -  p^n(\xi^n)}_{r^n},
\end{align*}
highlighting that suitable mesh refinement needs to be added to assert that $r^n$ and thus $K_n = \esssup_{\omega \in \Omega} \lVert
r^n(\omega)\rVert_{\U}$ vanishes sufficiently fast in view
of the equations~\eqref{eq:Kstrongly-convex}, \eqref{eq:Kconvex-constant},
or~\eqref{eq:Kconvex-variable}.

To this end, we need to provide an estimate for
\[
  K_n =  \|P_h p_h^n(\xi) -
  p^n(\xi)\|_{L^\infty(\Xi,\U)}.
\]
In view of the $L^2(D) = \U$ stability of $P_h$ we have
\begin{equation}\label{eq:discretization-Kbound1}
\begin{aligned}
  \|P_h p_h^n (\xi) - p^n(\xi)\|_{\U} &\le \|P_h p_h^n(\xi)  -
  P_hp^n(\xi)\|_{\U}+\|P_h p^n(\xi) - p^n(\xi)\|_{\U}\\
  &\le \|p_h^n(\xi)-p^n(\xi)\|_{\U}+\|P_h p^n(\xi) -
  p^n(\xi)\|_{\U}\\
  & \le \|p_h^n(\xi)-p^n(\xi)\|_{\U}+ ch\|\nabla
  p^n(\xi)\|_{\U}
  \\
  & \le \|p_h^n(\xi)-p^n(\xi)\|_{\U}+ ch\Bigl(\|y^D\|_{\U} + \|u_h\|_{\U}\Bigr)
\end{aligned}
\end{equation}
using well-known error estimates for $P_h$ and the stability
estimate~\eqref{eq:Aprioribounds_Poisson_stochastic} for $p(\xi)$
and $y(\xi)$. To bound the first term on the right
of~\eqref{eq:discretization-Kbound1} we need a bit of preparation.

\begin{lemma}\label{lemma:feconvergence}
Under Assumption~\ref{assumption:regularity-random-field} there exists $s \in (0,1]$ and $c > 0$ such that 
  \begin{align*}
    \|y_h(\xi) - y(\xi)\|_{\U} &\le ch^{\min(2s,t)}\|u_h\|_{\U},\\
    \|p_h(\xi) - p(\xi)\|_{\U} &\le ch^{\min(2s,t)}\Bigl( \|y^D\|_{\U}+\|u_h\|_{\U}\Bigr)
  \end{align*}
  holds for almost every $\xi \in \Xi$.
\end{lemma}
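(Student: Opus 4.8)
The plan is to derive both finite element error estimates by combining the regularity result of Lemma~\ref{lemma:poissonrandom} with a duality (Aubin--Nitsche) argument, while carefully accounting for the fact that the discrete bilinear form $b^\xi_h$ uses the \emph{interpolated} coefficient $I_ha(\xi)$ rather than $a(\xi)$ itself. This introduces a consistency error that Lemma~\ref{lem:interpolation-error-a} controls, contributing the $h^t$ term that appears in the exponent $\min(2s,t)$.

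First I would treat the state equation. Fix a realization $\xi$, let $y(\xi)\in\mathcal Y^0\cap H^{1+s}(D)$ be the exact solution from Lemma~\ref{lemma:poissonrandom} and $y_h(\xi)\in\mathcal Y^0_h$ the discrete one solving $b^\xi_h(y_h,v_h)=(u_h,v_h)_{\U}$. I would split the error using a strang-type estimate: writing the discrete Galerkin orthogonality for $b^\xi_h$ and comparing with $b^\xi$, the difference $\int_D (a(\xi)-I_ha(\xi))\nabla(\cdot)\cdot\nabla(\cdot)\,\D x$ is bounded in $L^\infty(D)$ by $C_r h^t$ via Lemma~\ref{lem:interpolation-error-a}. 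An $H^1$ (energy) estimate then yields $\lVert y_h(\xi)-y(\xi)\rVert_{\mathcal Y^0}\le c\,(h^s + h^t)\lVert u_h\rVert_{\U}$, where the $h^s$ comes from best approximation of $H^{1+s}$ functions by piecewise linears and the $h^t$ from the coefficient consistency error. To upgrade the $H^1$ estimate to the $L^2=\U$ estimate with the improved exponent $\min(2s,t)$, I would run a duality argument: introduce the auxiliary adjoint problem with right-hand side $y_h(\xi)-y(\xi)$, use its $H^{1+s}$ regularity (again Lemma~\ref{lemma:poissonrandom}), and pair it against the error. The $H^1$ error contributes a factor $h^s$ and the dual approximation another $h^s$, giving $h^{2s}$, while the coefficient consistency term contributes $h^t$ — hence the exponent $\min(2s,t)$. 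Throughout, the a priori bound \eqref{eq:Aprioribounds_Poisson_stochastic} converts $\lVert u\rVert$-type norms into the stated right-hand side, and the constants are uniform in $\xi$ since $C_s$ and $C_r$ are.

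For the adjoint variable $p(\xi)$ I would argue essentially identically, noting that $p(\xi)$ solves a problem of the same elliptic type with right-hand side $y^D-y(\xi)$, so Lemma~\ref{lemma:poissonrandom} again supplies $H^{1+s}$ regularity and the bound $\lVert p(\xi)\rVert_{H^{1+s}}\le c(\lVert y^D\rVert_{\U}+\lVert u_h\rVert_{\U})$. One extra subtlety is that the discrete adjoint $p_h(\xi)$ is driven by $y^D-y_h(\xi)$ rather than $y^D-y(\xi)$, so I would add and subtract to separate a genuine adjoint discretization error from the already-estimated state error $\lVert y_h(\xi)-y(\xi)\rVert_{\U}$; both are of order $h^{\min(2s,t)}$, so they combine cleanly.

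The main obstacle I expect is handling the non-Galerkin-orthogonal, variable-coefficient consistency error precisely in the duality step: because $b^\xi_h\ne b^\xi$, the standard Aubin--Nitsche cancellation is imperfect, and one must track the coefficient perturbation term through the duality pairing without losing a power of $h$ or introducing a $\xi$-dependent constant. Keeping the $h^t$ from Lemma~\ref{lem:interpolation-error-a} from degrading the $h^{2s}$ rate — and confirming that $\min(2s,t)$ is indeed the sharp combined exponent — is the delicate bookkeeping at the heart of the proof.
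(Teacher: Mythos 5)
Your plan is correct, and the bookkeeping you flag as delicate does close: in the duality step the non-orthogonality term equals $\int_D (I_ha(\xi)-a(\xi))\,\nabla y_h\cdot\nabla z_h\,\mathrm{d}x$, which Lemma~\ref{lem:interpolation-error-a} plus discrete stability bounds by $ch^t\|u_h\|_{\U}\|e\|_{\U}$, so the total error is $c\bigl(h^{2s}+h^{s+t}+h^t\bigr)\|u_h\|_{\U}\le ch^{\min(2s,t)}\|u_h\|_{\U}$, as claimed. However, your route differs from the paper's. The paper never invokes a Strang-type estimate: it introduces an intermediate function $y^h(\xi)\in\mathcal{Y}^0$ solving the \emph{continuous} problem with the interpolated coefficient, $b^\xi_h(y^h(\xi),v)=(u,v)_{\U}$ for all $v\in\mathcal{Y}^0$, and splits $y_h-y=(y_h-y^h)+(y^h-y)$. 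The first piece is a genuine Galerkin error for the form $b^\xi_h$, so the standard Aubin--Nitsche argument applies verbatim (no consistency term inside the duality identity) and yields $ch^{2s}\|u_h\|_{\U}$; the second piece is handled entirely at the continuous level, since $e=y^h-y$ solves $b^\xi(e,v)=((a(\xi)-I_ha(\xi))\nabla y^h,\nabla v)_{\U}$, and an $H^{-1}$-bound on this residual gives $ch^t\|u_h\|_{\U}$. What each approach buys: the paper's splitting keeps both sub-estimates textbook-standard, but its duality step tacitly requires uniform $H^{1+s}$-regularity and coercivity for the problem with coefficient $I_ha(\xi)$, uniformly in $h$ and $\xi$ (a point the paper passes over by citing Lemma~\ref{lemma:poissonrandom}); your version poses the dual problem with the exact coefficient $a(\xi)$, so Lemma~\ref{lemma:poissonrandom} applies directly, at the price of the messier duality identity carrying the consistency term and the extra cross term $h^{s+t}$, which is harmless since $s+t\ge\min(2s,t)$. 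Your treatment of the adjoint (adding and subtracting to separate the perturbed right-hand side $y^D-y_h$ from the adjoint discretization error) is a correct way to make precise what the paper dismisses as ``analogous calculations.''
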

\begin{proof}
  We split the error by introducing the intermediate function
  $y^h(\xi) \in \mathcal{Y}^0$ solving
  \[
    b^\xi_h(y^h(\xi),v) = (u,v)_{\U}\qquad \forall v \in \mathcal{Y}^0.
  \]
  Then to estimate $\| y_h(\xi) - y^h(\xi)\|_{\U}$, 
  we employ a standard duality argument (Aubin-Nitsche trick)
  using the uniform $H^{1+s}$-regularity of the problem, see
  Lemma~\ref{lemma:poissonrandom}, and obtain
  \[
    \|y_h(\xi) - y^h(\xi)\|_{\U} \le ch^{2s}\|u_h\|_{\U}.
  \]

  To estimate $\|y^h(\xi) - y(\xi)\|_{\U}$, we notice that
  $e = y^h(\xi) - y(\xi)$ solves the equation
  \[
    b^\xi(e,v) = ((a(\xi)-I_ha(\xi))\nabla y^h,\nabla v)_{\U}\quad
    \forall v \in \mathcal{Y}^0.
  \]
  In view of Lemma~\ref{lemma:poissonrandom}, it is sufficient to
  estimate the $H^{-1}$-norm of the right-hand side $f =-\nabla \cdot
  ((a(\xi)-I_ha(\xi))\nabla y^h(\xi))$.
  It is immediately clear by definition, and
  Lemma~\ref{lemma:poissonrandom}, that 
  \begin{align*}
    \|f\|_{H^{-1}(D)} &\le \|\nabla
    y^h(\xi)\|_{\U}\|a(\xi)-I_ha(\xi)\|_{L^\infty(D)}\\
    &\le C_0C_r\|u_h\|_{\U}h^t,
  \end{align*}
  showing
  \[
    \|y^h(\xi) - y(\xi)\|_{\U} \le c h^t \|u_h\|_{\U}.
  \]
The triangle inequality yields the estimate for $y_h(\xi)-y(\xi)$.
  
  Analogous calculations give the estimate for $p_h(\xi)-p(\xi)$.
\end{proof}

Combining Lemma~\ref{lemma:feconvergence}
with~\eqref{eq:discretization-Kbound1}, we obtain the bound
\begin{equation}\label{eq:Kn-bound}
  K_n \le c h^{\min(2s,t,1)}\Bigl(\|y^D\|_{\U}+\|u_h\|_{\U}\Bigr).
\end{equation}
From this it is easy to derive relations for the selection of the mesh size $h_n$ in the $n^{\text{th}}$ iteration based on the estimates obtained in section~\ref{sec:PSG-Efficiency} and the bound \eqref{eq:Kn-bound}. 

For the strongly convex case, \eqref{eq:Kstrongly-convex} implies that we need for a fixed $K>0$
$$c h^{\min(2s,t,1)}\Bigl(\|y^D\|_{\U}+\|u_h\|_{\U}\Bigr) \leq \frac{K}{n+\nu}.$$
We note that the strongly convex parameter for
\eqref{eq:stationaryheatsourceproblemrandom} is $\mu=\lambda$. From
Theorem~\ref{thm:strongly-convex-step-size-bias-rule}  we get with
$\theta > 1/(2\lambda)$ and $\nu \geq 2\theta K/(2\lambda \theta -1)
-1$ the rule
\begin{equation}
 \label{eq:strongly-convex-bias-stepsize-rule}
 t_n = \frac{\theta}{n+\nu}, \quad  h_n \le \Bigl(\frac{c}{n+\nu}\Bigr)^{1/\min(2s,t,1)}.
\end{equation}

For the convex case with constant step sizes, from \eqref{eq:Kconvex-constant} we have the requirement that 
\begin{equation}
\label{eq:bound-convex-case-constant-step}
 \sum_{n=1}^N c h_n^{\min(2s,t,1)} \propto \sqrt{N}.
\end{equation}
Thus we get from  \eqref{eq:stepsizerule-convex-constant} and \eqref{eq:bound-convex-case-constant-step} the rule
\begin{equation}
\label{eq:convex-bias-constant-stepsize-rule}
t = \frac{\Dad}{\sqrt{M N}}, \quad h_n \leq \left(c (\sqrt{n} - \sqrt{n-1})\right)^{1/\min(2s,t,1)}.
\end{equation}
For the convex case with variable step sizes, choosing $i = \lceil \alpha N \rceil$ for a fixed $\alpha \in (0,1)$, \eqref{eq:Kconvex-variable} requires 
\begin{equation}\label{eq:Kconvex-variable-new}
  \sum_{n=i}^N \frac{1}{\sqrt{n}}c h_n^{\min(2s,t,1)}\propto 1.
\end{equation}
Therefore with a similar argument, we get for a constant $\theta > 0$
\begin{equation}
  \label{eq:convex-bias-variable-stepsize-rule}
  \begin{aligned}
    t_n &= \frac{\theta \Dad}{\sqrt{M n}}, \\
    h_n &\leq \left(c (\sqrt{n}
      - \sqrt{n-1})\right)^{1/\min(2s,t,1)} \\
    &= \left(\frac{c}{\sqrt{n}+\sqrt{n-1}}\right)^{1/\min(2s,t,1)}.
  \end{aligned}
\end{equation}

Summarizing, by suitable control of the mesh size, and thus the
discretization bias, we can recover the convergence rates proven
in~Theorem~\ref{thm:strongly-convex-step-size-bias-rule} and
~Theorem~\ref{thm:convex-averaging} as follows:
\begin{thm}
\label{thm:main-result}
If $j$ is $\mu$-strongly convex, $\theta$ and $\nu$ are chosen such that $\theta > 1/(2\mu)$ and $\nu \geq 2\theta K/(2\mu \theta -1) -1$, and step sizes and mesh fineness is chosen to satisfy \eqref{eq:strongly-convex-bias-stepsize-rule}, then 
\begin{equation*}
\label{eq:efficiency-iterates-strongly-convex-mesh}
 \E[\lVert u^n - u \rVert_{\U}] \leq \mathcal{O}\left(\frac{1}{\sqrt{n+\nu}}\right).
\end{equation*}
If $j$ is $\mu$-strongly convex, $\nabla j$ is Lipschitz continuous, and $\nabla j(u)=0$, then
$$\E[j(u^n)-j(u)] \leq  \mathcal{O}\left(\frac{1}{n+\nu}\right).$$

If $j$ is generally convex, and step sizes and mesh fineness are chosen to satisfy \eqref{eq:convex-bias-constant-stepsize-rule}, then
$$\E[j(\tu_1^N)-j(u)]  \leq  \mathcal{O}\left(\frac{1}{\sqrt{N}}\right).$$

Finally, if $j$ is generally convex, and step sizes and mesh fineness are chosen to satisfy \eqref{eq:convex-bias-variable-stepsize-rule}, then 
$$\E[j(\tilde{u}_i^N) - j(u)] \leq \mathcal{O}\left(\frac{1}{\sqrt{N}}\right)$$
as long as $i = \lceil\alpha N\rceil$ for some $\alpha \in (0,1)$.
\end{thm}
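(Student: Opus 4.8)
The plan is to read the theorem as a bookkeeping step: each of the four assertions follows immediately from the corresponding efficiency estimate of Section~\ref{sec:PSG-Efficiency} (Theorem~\ref{thm:strongly-convex-step-size-bias-rule} or Theorem~\ref{thm:convex-averaging}), provided one checks that the prescribed mesh-refinement rule forces the discretization bias $K_n$ to decay at precisely the rate those theorems require. The common starting point is the bound \eqref{eq:Kn-bound}, $K_n \le c\,h_n^{\min(2s,t,1)}(\|y^D\|_{\U}+\|u_h\|_{\U})$. First I would exploit the standing hypothesis that $\Uad$ is bounded: every iterate lies in $\Uad_h \subset \Uad$, so $\|u_h^n\|_{\U}$ is bounded by a fixed constant, the factor $\|y^D\|_{\U}+\|u_h\|_{\U}$ is uniformly bounded, and it may be absorbed into a generic constant to give simply $K_n \le c\,h_n^{\min(2s,t,1)}$. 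Along the way I would also confirm that Assumption~\ref{assumption:gradient} holds for the discretized gradient of Lemma~\ref{lemma:gradient-discretized}: $\mathcal{F}_n$-measurability of $u_h^n$ and $r^n$ is clear from the decomposition preceding \eqref{eq:Kn-bound}; condition (ii) follows since $K_n \to 0$; and the second-moment bound (iii) follows from boundedness of $\Uad$ together with the stability estimate \eqref{eq:Aprioribounds_Poisson_stochastic} applied to $p_h(\xi)$.

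For the strongly convex assertions I would substitute the mesh rule from \eqref{eq:strongly-convex-bias-stepsize-rule}, which yields $h_n^{\min(2s,t,1)} \le c/(n+\nu)$ and hence $K_n \le K/(n+\nu)$, exactly matching \eqref{eq:Kstrongly-convex}. Identifying the strong convexity parameter $\mu = \lambda$ and using the stated choices of $\theta$ and $\nu$, Theorem~\ref{thm:strongly-convex-step-size-bias-rule} applies directly and delivers both the iterate rate $\mathcal{O}(1/\sqrt{n+\nu})$ and, under the additional Lipschitz continuity of $\nabla j$ and the stationarity hypothesis $\nabla j(u)=0$, the objective rate $\mathcal{O}(1/(n+\nu))$.

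For the convex case with constant step size I would insert the mesh rule in \eqref{eq:convex-bias-constant-stepsize-rule}, giving $K_n \le c(\sqrt{n}-\sqrt{n-1})$. The decisive point is that this sum telescopes: $\sum_{n=1}^N K_n \le c\sum_{n=1}^N(\sqrt{n}-\sqrt{n-1}) = c\sqrt{N}$, which is precisely condition \eqref{eq:Kconvex-constant}, so the first part of Theorem~\ref{thm:convex-averaging} yields the $\mathcal{O}(1/\sqrt{N})$ rate.

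The variable step size case is where I expect the only genuine subtlety. Here the mesh rule \eqref{eq:convex-bias-variable-stepsize-rule} again gives $K_n \le c(\sqrt{n}-\sqrt{n-1}) = c/(\sqrt{n}+\sqrt{n-1})$, but the relevant quantity to control is now $\sum_{n=i}^N K_n/\sqrt{n}$ rather than a clean telescoping sum. The hard part is turning this into an $\mathcal{O}(1)$ bound: I would use $K_n/\sqrt{n} \le c/(\sqrt{n}(\sqrt{n}+\sqrt{n-1})) \le c/n$, so that $\sum_{n=i}^N K_n/\sqrt{n} \le c\sum_{n=i}^N 1/n \le c\log(N/(i-1))$, and then observe that the choice $i = \lceil \alpha N\rceil$ keeps the ratio $N/(i-1)$ bounded (tending to $1/\alpha$) as $N\to\infty$. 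This confirms \eqref{eq:Kconvex-variable}, and the second part of Theorem~\ref{thm:convex-averaging} closes the argument with the $\mathcal{O}(1/\sqrt{N})$ rate.
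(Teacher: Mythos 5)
Your proposal is correct and follows essentially the same route as the paper: the paper's proof is a one-line reduction to Theorem~\ref{thm:strongly-convex-step-size-bias-rule} and Theorem~\ref{thm:convex-averaging}, with the bias verification done in the surrounding text exactly as you do it --- absorbing the bounded factor $\|y^D\|_{\U}+\|u_h\|_{\U}$ via boundedness of $\Uad$ into \eqref{eq:Kn-bound}, matching \eqref{eq:Kstrongly-convex} in the strongly convex case, telescoping $\sum_n(\sqrt{n}-\sqrt{n-1})$ for the constant-step case, and bounding $\sum_{n=i}^N K_n/\sqrt{n} \le c\sum_{n=i}^N 1/n$ with $i=\lceil\alpha N\rceil$ for the variable-step case (the paper uses the crude bound $(N-i+1)/i$ where you use a logarithm, an immaterial difference). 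Your explicit check of Assumption~\ref{assumption:gradient} for the discretized gradient is a small extra piece of rigor the paper leaves implicit, but it does not change the argument.
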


\begin{proof}
This result immediately follows from Theorem~\ref{thm:strongly-convex-step-size-bias-rule} and Theorem~\ref{thm:convex-averaging}.
\end{proof}

Theorem~\ref{thm:main-result} allows for an a priori coupling of the mesh refinement with the progress of the projected stochastic gradient method, and we obtain the discretized version of Algorithm~\ref{alg:PSG_Hilbert}. The
resulting algorithm is given in
Algorithm~\ref{alg:PSG_Hilbert_discretized}.
\begin{algorithm}
\begin{algorithmic}[1] 
\STATE \textbf{Initialization:} Select $h_1 > 0$, $u_h^1 \in \Uad_{h}$
\FOR{$n=1,2,\dots$}
\STATE Generate $\xi^n$, independent from $\xi^1, \dots, \xi^{n-1}$,
and $t_n >0$, $K_n > 0$
\IF{$h=h_n$ is too large per \eqref{eq:strongly-convex-bias-stepsize-rule}, \eqref{eq:bound-convex-case-constant-step}, or \eqref{eq:convex-bias-variable-stepsize-rule}}
\STATE Refine mesh $\mathcal T_{h_n}$ until $h=h_n$ is small enough.
\ENDIF
\STATE Calculate $(y_h^n,p_h^n)$ solving
\[
  b^{\xi^n}(y_h^n,v_h) = (u_h^n,v_h)_{\U}, \quad b^{\xi^n}(v_h,p_h^n) = (y^D-y_h^n,v_h)_{\U}
\]
for all $v_h \in \mathcal{Y}_h^0$.
\STATE $u_h^{n+1} := \pi_{\Uad_h}(u_h^n - {t_n}(\lambda u_h^n - P_hp_h^n))$
\ENDFOR
\end{algorithmic}
\caption{Projected Stochastic Gradient (PSG) - Discretized Version}
\label{alg:PSG_Hilbert_discretized}
\end{algorithm}
Let us note that in both cases the scaling of the mesh size
parameters $h_n$ is identical, and boundedness
of~\eqref{eq:Kconvex-variable} follows by the particular choice $i =
\lceil\alpha N\rceil$ since then
\[
  h_n^{\min(2s,t,1)} \le \frac{c}{\sqrt{n}}
\]
and consequently
\[
  \sum_{n=i}^N \frac{h_n^{\min(2s,t,1)}}{\sqrt{n}} \le c\sum_{n=i}^N
  \frac{1}{n}
  \le c \frac{(N-i+1)}{i} \le c \frac{(1-\alpha) N + 1}{\alpha N}
  \rightarrow c 
\]
as $(N \rightarrow \infty)$.

\begin{remark}\label{rem:meshrefinement-lower-regularity}
  While in some situations, the constant $s$ can be calculated, in general it
  is unknown. Hence it appears to be natural to guess, probably
  mistakenly, that $\min(2s,t,1) = 1$. Now, for large values of $n$
  \[
    \frac{c}{\sqrt{n}+\sqrt{n-1}} < 1
  \]
  and thus
  \begin{align*}
    \frac{c}{\sqrt{n}+\sqrt{n-1}} &\ge
    \frac{c}{\sqrt{n}+\sqrt{n-1}}
    \left(\frac{c}{\sqrt{n}+\sqrt{n-1}}\right)^{1/\min(2s,t,1)-1}\\
    &=\left(\frac{c}{\sqrt{n}+\sqrt{n-1}}\right)^{1/\min(2s,t,1)}.
  \end{align*}
  Consequently, having $h_n \simeq \frac{c}{\sqrt{n}+\sqrt{n-1}}$
  while $\min(1,2s,t) = p < 1$
  will give
  \[
    h_n^{\min(2s,t,1)} \simeq \frac{1}{(n+\nu)^{p}} \gg \frac{1}{n+\nu},
  \]
  slowing the convergence of the algorithm.
  An analogous argument can be made for the rule \eqref{eq:strongly-convex-bias-stepsize-rule}. 
\end{remark}

\begin{remark}
  Note that our above coupling does not require the mesh to be
  uniform, i.e., it is possible that $\min_{T \in \mathcal T_h} h_T
  \ll \min_{T \in \mathcal T_h} h_T $. This allows to handle
  singularities in the problem, e.g., boundary values or jumping
  coefficients by suitably graded meshes.

  Further, for a reliable a posteriori error estimator $\eta(\xi)$,
  i.e., for some $c$ independent of $h$ and $\xi$ it holds
  \[
    \|a(\xi)-I_ha(\xi)\|_{L^\infty(D)}+\|y_h(\xi)-y(\xi)\|_{\mathcal U}+\|p_h(\xi) - p(\xi)\|_{\U} \le c \eta(\xi),
  \]
  one can easily obtain an analogous coupling between $n$ and a
  tolerance for $\eta(\xi)$  by replacing $h^{\min(2s,t,1)}$ with
  $\eta(\xi_n)$ in~\eqref{eq:bound-convex-case-constant-step}
  and~\eqref{eq:Kconvex-variable-new}.
  Then the a posteriori controlled version of the algorithm is
  immediately obtained replacing line~5 in Algorithm~\ref{alg:PSG_Hilbert_discretized}
  by \emph{Refine mesh $\mathcal T_{h_n}$ until $\eta(\xi^n)$ is small
  enough}.
\end{remark}
\section{Numerical Experiments}\label{sec:Experiments}
Let the domain be given by $D=(0,1)\times(0,1)$ and $\Uad = \{ u \in \U \,|\, -1 \leq u(x) \leq 1  \quad \forall x \in D\}.$ For all simulations, we choose $u^1 \equiv 0$. For the strongly convex case, we define
$y^D(x) = -\left(8 \pi^2 + \frac{1}{8 \pi^2 \lambda} \right)\sin(2 \pi x_1) \sin(2 \pi x_2)$. For the convex case, we use $\lambda =0$ and the following modified PDE constraint
\begin{align}
\label{eq:modified-PDE-constraint}
 \quad -\nabla \cdot (a(x,\xi) \nabla y(x,\xi)) &= u(x)+e^D(x), \quad (x,\xi) \in D \times \Xi\\
 y(x,\xi) &=0,\phantom{(x)+e^D(x),}  \quad(x,\xi) \in \partial D \times \Xi.
\end{align}
with $y^D(x) = \sin(\pi x_1) \sin(\pi x_2)+3\sin(2 \pi x_1) \sin(2\pi x_2)$ and the function $e^D(x) =6 \pi^2 \sin(\pi x_1) \sin(\pi x_2) - \text{sign}(\sin(2\pi x_1) \sin(2\pi x_2))$.

\subsection{Random Field Choices}
To demonstrate the effect of the random field choice on the convergence, we observe three different random fields. Example realizations of the fields are shown in Figure~\ref{fig:random-field-realizations}. We recall that for a random field $a$, the Karhunen--Lo\`eve expansion takes the form 
\begin{equation}
\label{eq:KL-expansion}
 a(x,\omega) = a_0 + \sum_{i=1}^\infty \sqrt{\lambda_i} \phi_i(x) \xi_i(\omega),
\end{equation}
where $\xi_i$ is a random variable with given probability distribution, and $\lambda_i$ and $\phi_i$ denote the eigenvalues and eigenfunctions associated with the compact self-adjoint operator defined via the covariance function $C \in L^2(D\times D)$ by
$$\mathcal{C}(\phi)(x) = \int_D C(x,y) \phi(y) \D y, \quad x\in D.$$ For simulations, we use a finite dimensional noise assumption to replace \eqref{eq:KL-expansion} with
\begin{equation}
\label{eq:truncated-KL-expansion}
 a(x,\xi) = a_0 + \sum_{i=1}^m \sqrt{\lambda_i} \phi_i(x) \xi_i(\omega).
\end{equation}
For an interval $[a,b]$ where $a<b$, we denote the uniform distribution by $U(a,b)$ and the truncated normal distribution with parameters $\mu$ and $\sigma$ by $\mathcal{N}(\mu, \sigma, a, b)$ \footnote{The parameters $\mu$ and $\sigma$ correspond to the mean and standard deviation of the standard normal distribution $N(\mu, \sigma)$; see  \cite{Kroese2011} for a definition of the truncated distribution.}
\begin{remark}
Of course, truncating the Karhunen--Lo\`eve expansion after
$m$ summands will introduce an additional error, in general.
This can be included in the error estimates in
Lemma~\ref{lemma:feconvergence} analogous to the error
in the uncertain coefficient due to interpolation.
\end{remark}
\paragraph{Example 1}
For the first example (cf.,~\cite[Example 9.37]{Lord2014}), we choose $a_0 = 5,  m = 20$, and $\xi_i \sim U(-\sqrt{3},\sqrt{3})$ for $i=1, \dots, m$. The eigenfunctions and eigenvalues are given by 
$$\tilde{\phi}_{j,k}(x):= 2\cos(j \pi x_2)\cos(k \pi x_1), \quad \tilde{\lambda}_{k,j}:=\frac{1}{4} \exp(-\pi(j^2+k^2)l^2), \quad j,k \geq 1,$$
where we reorder terms so that the eigenvalues appear in descending order (i.e., $\phi_1 = \tilde{\phi}_{1,1}$ and $\lambda_1 = \tilde{\lambda}_{1,1}$) and we choose the correlation length $l=0.5$.
\paragraph{Example 2}
For the second example, we generate a log-normal random field with truncated Gaussian noise by first generating a truncated expansion for a Gaussian field with a separable exponential, i.e.,~the covariance function has the form
$$C(x,y) = e^{-|x_1-y_1|/l_1 -|x_2-y_2|/l_2}$$
on $D=[-\tfrac{1}{2},\tfrac{1}{2}]^2.$ The eigenfunctions are given by $\phi_j(x) = \phi_{i,1}(x_1)\phi_{k,2}(x_2)$ and the eigenvalues are $\lambda_j = \lambda_{i,1} \lambda_{k,2}$, where $\phi_{i,m}, \lambda_{i,m}$ are for $m=1,2$ solutions to 
\begin{equation}
\label{eq:eigenvalue-problem-gauss-RF}
\int_{-1/2}^{1/2} e^{-|x_m-y_m|/l_m} \phi^m(y_m) \D y_m = \lambda^m \phi^m(x_m), \quad x_m \in [-\tfrac{1}{2}, \tfrac{1}{2}].
\end{equation}
Solutions to \eqref{eq:eigenvalue-problem-gauss-RF} have the analytic expression (cf.,~\cite[Example 7.55]{Lord2014})
\begin{equation}
 \label{eq:eigenfunctions-eigenvalues-Gauss}
\begin{aligned}
 \phi^{i,m} &= \begin{cases}
               \sqrt{1/2+\sin(\omega_i)/(2 \omega_i)}^{-1} \cos(\omega_i x_m), & i \text{ odd}\\ 
                \sqrt{1/2-\sin(\omega_i)/(2 \omega_i)}^{-1} \sin(\omega_i x_m), & i \text{ even} 
              \end{cases}\\
\lambda_{i,m} &= \frac{2 l_m^{-1}}{\omega_i^2 + l_m^{-2}}, \quad \quad \quad  \omega_i = \begin{cases}
             \hat{\omega}_{\lceil{i/2}\rceil}, & i \text{ odd}\\
             \tilde{\omega}_{i/2}, & i \text{ even}
            \end{cases},
\end{aligned}
\end{equation}
where $\hat{\omega}_j$ is the $j^{\text{th}}$ positive root of $l^{-1}-\omega \tan(\omega/2)$ and $\tilde{\omega}_j$ is the $j^{\text{th}}$ positive root of $l^{-1} \tan(\omega/2) + \omega.$ Sorting terms in \eqref{eq:eigenfunctions-eigenvalues-Gauss} by decreasing eigenvalues and reindexing, we define the log-normal field with truncated Gaussian noise by
\begin{equation}
\label{eq:lognormal}
a(x,\xi) = e^{a_0+\sum_{i=1}^m \sqrt{\lambda_i} \phi_i(x) \xi_i(\omega)} 
\end{equation}
with $a_0=1$, $l_1=l_2=1$, $m=100$, and $\xi_i \sim \mathcal{N}(0,0.1,-100,100).$ In simulations, the random fields are additionally transformed to $(0,1)\times(0,1)$. For this choice, the trajectories of $a$ belong to $C^t(\bar{D})$ for all $t<1/2$; see \cite[Lemma 2.3]{Charrier2013}.

\paragraph{Example 3}
We observe an example that does not satisfy Assumption~\ref{assumption:regularity-random-field}. We partition $D$ into two non-overlapping subdomains $D_1$, $D_2$ and define a piecewise constant field by
\begin{equation}
\label{eq:RF3}
a(x,\omega) = \xi_1(\omega) \mathds{1}_{D_1}(x) + \xi_2(\omega) \mathds{1}_{D_2}(x)
\end{equation}
where $\mathds{1}_{D_i}$ is the indicator function of the set $D_i \subset D$ and $\xi_i$ are bounded, positive and independent random variables. In simulations, we let $D_1 = (0,1) \times (1/2,1)$ and $D_2 = (0,1) \times (0,1/2)$; we let $\xi_1 \sim U(3, 4)$ and $\xi_2 \sim U(1, 2)$.

\begin{figure}
\centering
  \begin{subfigure}[b]{0.32\linewidth}
  \centering
    \includegraphics[height = 3.7cm]{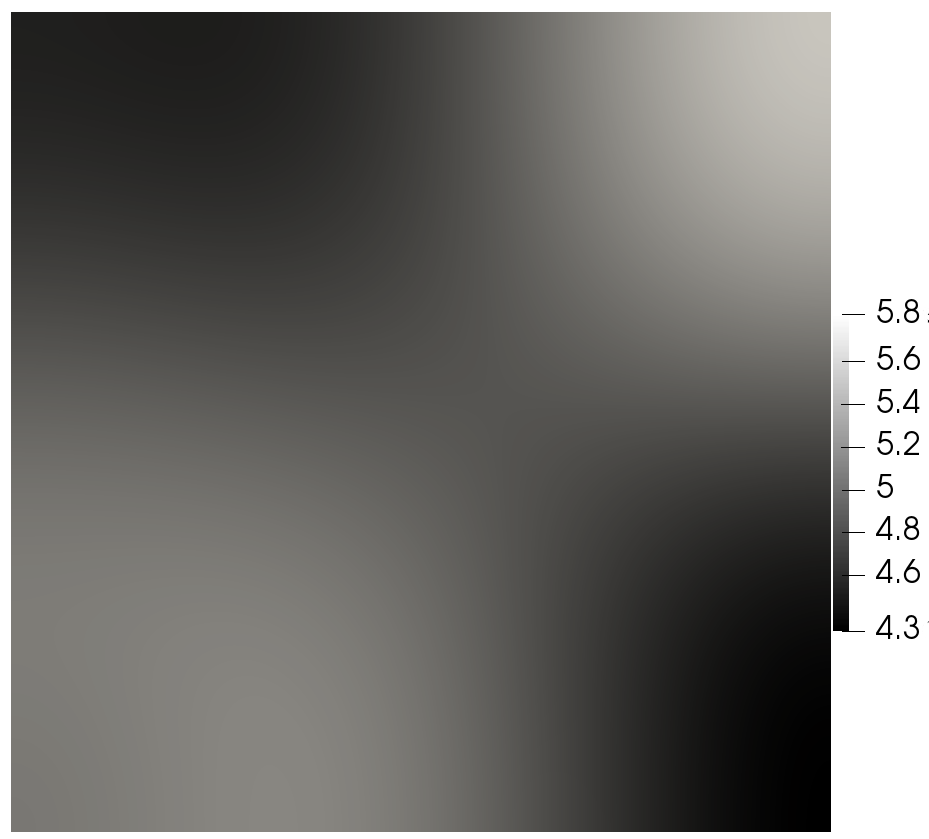}
    \caption{Example 1}  
  \end{subfigure}
  \begin{subfigure}[b]{0.32\linewidth}
  \centering
    \includegraphics[height = 3.7cm]{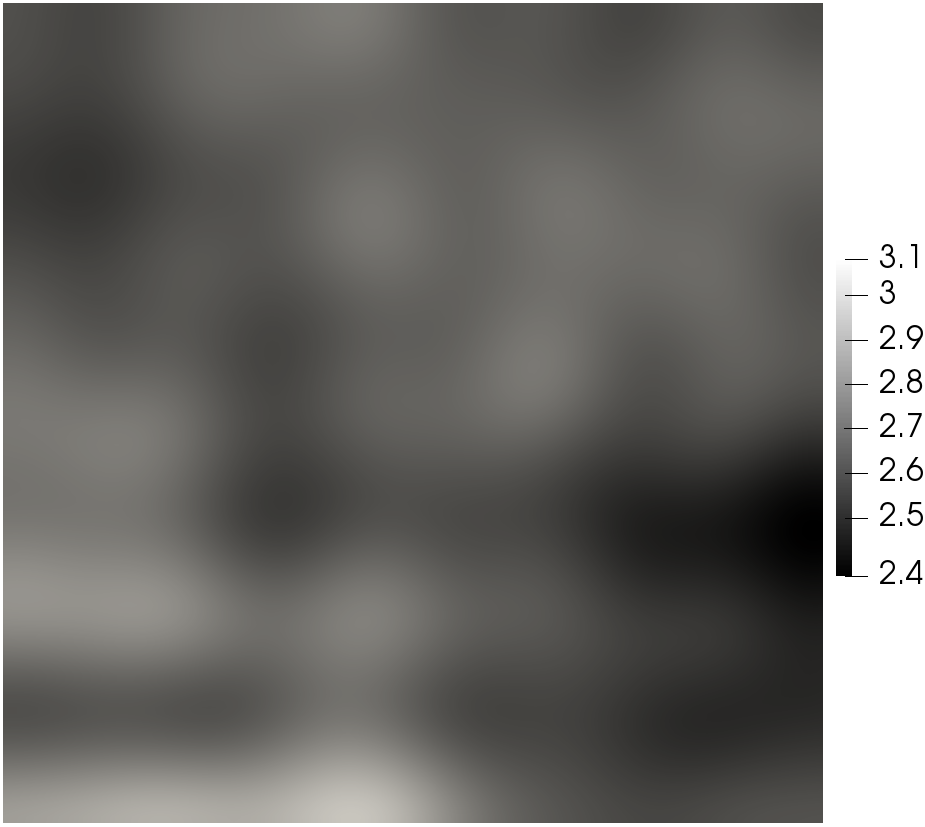}
    \caption{Example 2}  
  \end{subfigure}
   \begin{subfigure}[b]{0.32\linewidth}
  \centering
    \includegraphics[height = 3.7cm]{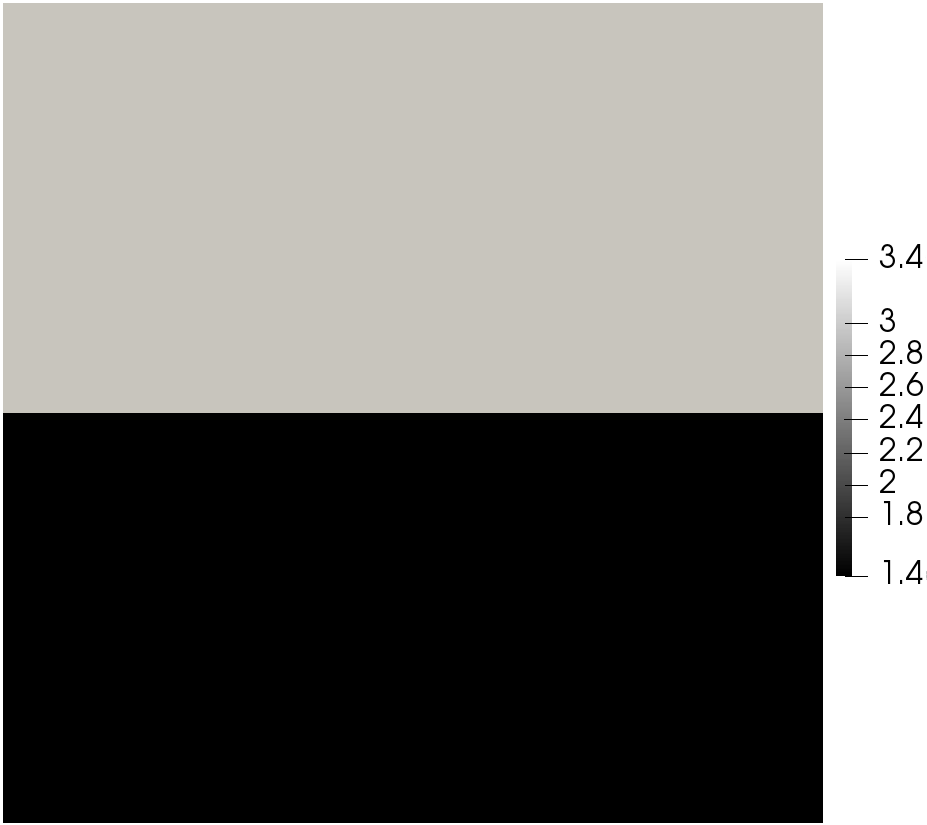}
    \caption{Example 3}  
  \end{subfigure}
  \caption{Single realizations of each random field.}
  \label{fig:random-field-realizations}
\end{figure}

\subsection{Experiments}
Simulations were run on FEniCS \cite{Alnes2015} on a laptop with Intel
Core i7 Processor (8 x 2.6 GHz) with 16 GB RAM. 
In all experiments, the initial mesh contained eight triangles and was uniformly refined using newest vertex bisection.

\paragraph{Effect of mesh refinement on objective function value}
In the first experiment, we observe objective function values with and without mesh refinement for the random field in example 1. The strongly convex case is observed with $\lambda = 0.1.$  A total of 1000 samples is taken at iteration $n=100$ and objective function values are compared. We use step sizes \eqref{eq:strongly-convex-bias-stepsize-rule} where $\theta = 1/(2\lambda) +1$, $\nu = 2\theta K/(2\lambda \theta -1) -1$ and $K=5$. Without refinement, where the mesh is constant $h \approx 0.18$, $\hat{j}_{100} \approx 779.503$. With refinement, where the mesh is refined according to \eqref{eq:strongly-convex-bias-stepsize-rule}, we get  $h_{100} \approx 0.04$ and $\hat{j}_{100} \approx 779.479$. Figure \ref{fig:experiment1} shows clear jumps where the mesh is refined. 

\begin{figure}
 \centering
  \begin{subfigure}[b]{0.45\linewidth}
   \centering
  \includegraphics[height = 4.0cm]{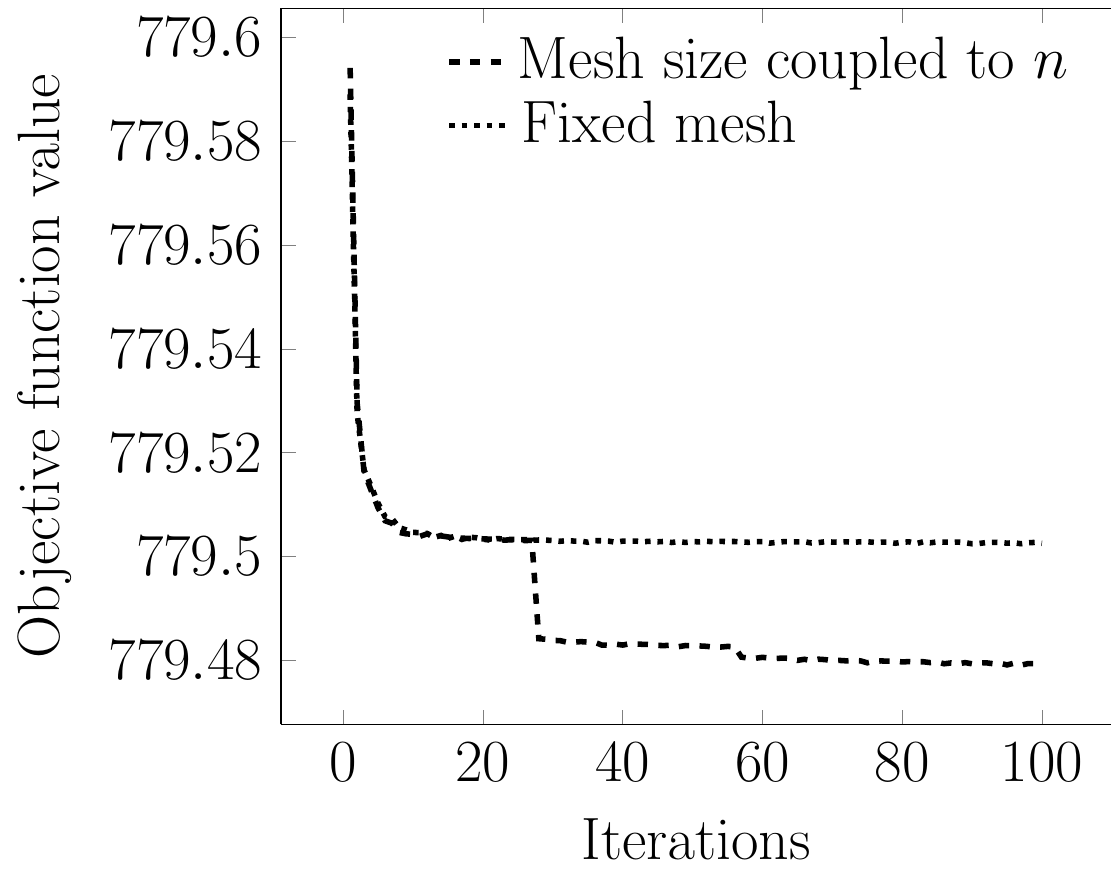}
  \caption{Objective function}
    \end{subfigure}
  \begin{subfigure}[b]{0.45\linewidth}
  \centering
    \includegraphics[height = 4.0cm]{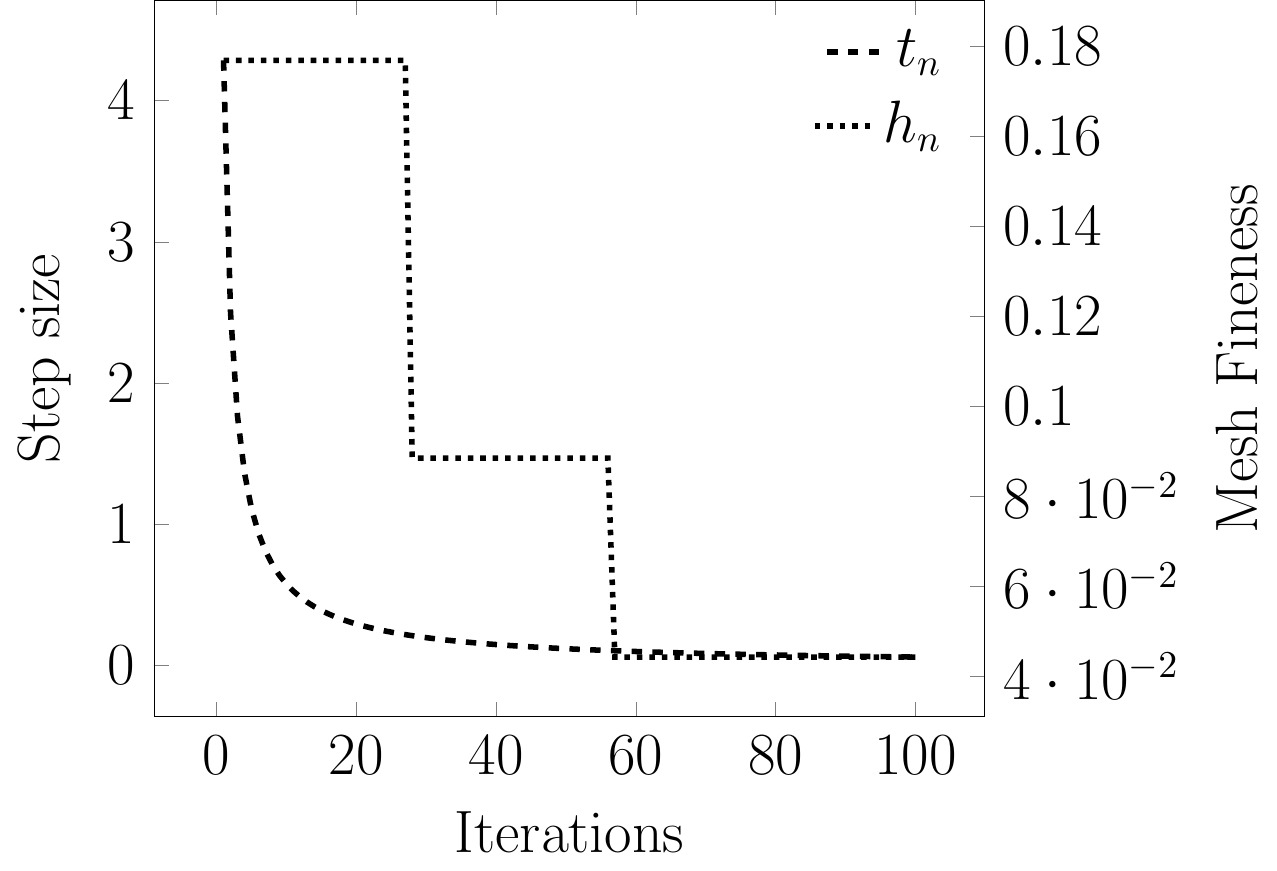}
    \caption{Step size and mesh fineness}  
  \end{subfigure}
  \caption{Behavior of objective function with and without mesh refinement.}
      \label{fig:experiment1}
\end{figure}

\paragraph{Convergence plots - Strongly Convex Case}
To demonstrate Algorithm~\ref{alg:PSG_Hilbert_discretized} using \eqref{eq:strongly-convex-bias-stepsize-rule}, we choose the example for the strongly convex case with $\lambda = 0.2$, $\theta > 1/(2\lambda) +1$, $K=1$, and $\nu = 2 \theta K/(2\lambda \theta -1) -1$, and finally, $c = 17.5$, which was chosen to prevent the mesh from refining too aggressively. To generate reference solutions, the algorithm was run for $n=3000$ iterations with $h_{1000} \approx 0.0044$ to get $\bar{u}:=u_{3000}$; these solutions are shown for each of the random fields in Figure~\ref{fig:reference solutions-SC}.

\begin{figure}
\centering
  \begin{subfigure}[b]{0.32\linewidth}
  \centering
    \includegraphics[height = 3.4cm]{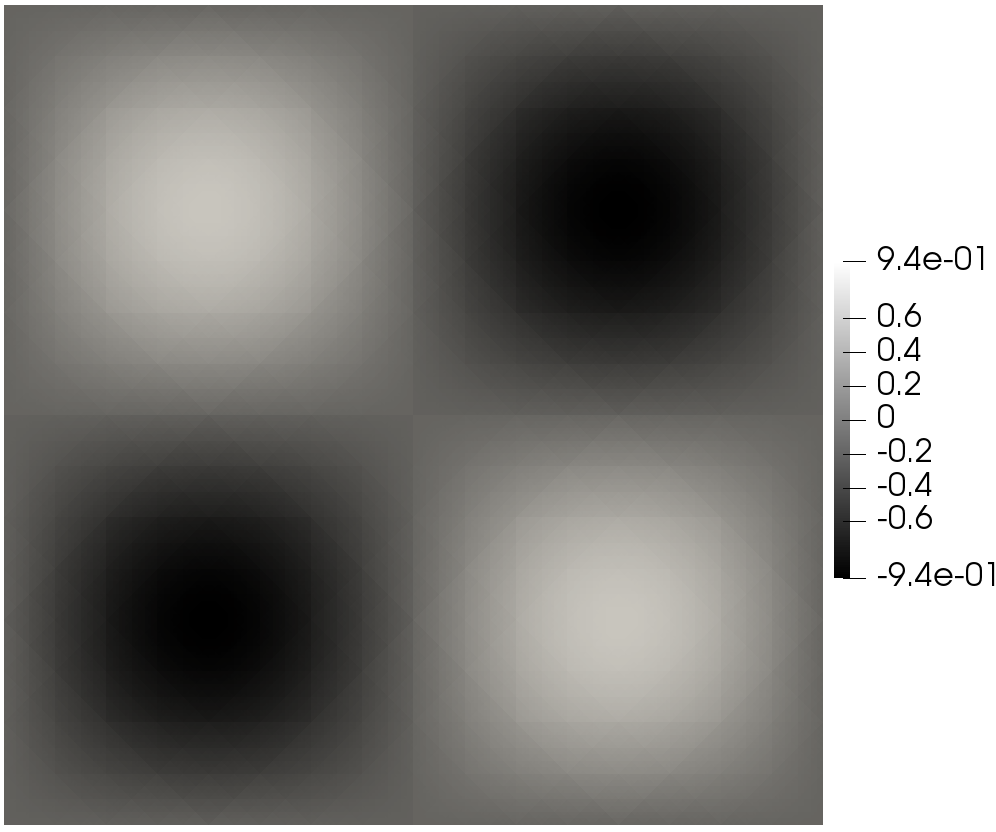}
    \caption{Example 1}  
  \end{subfigure}
  \begin{subfigure}[b]{0.32\linewidth}
  \centering
    \includegraphics[height = 3.4cm]{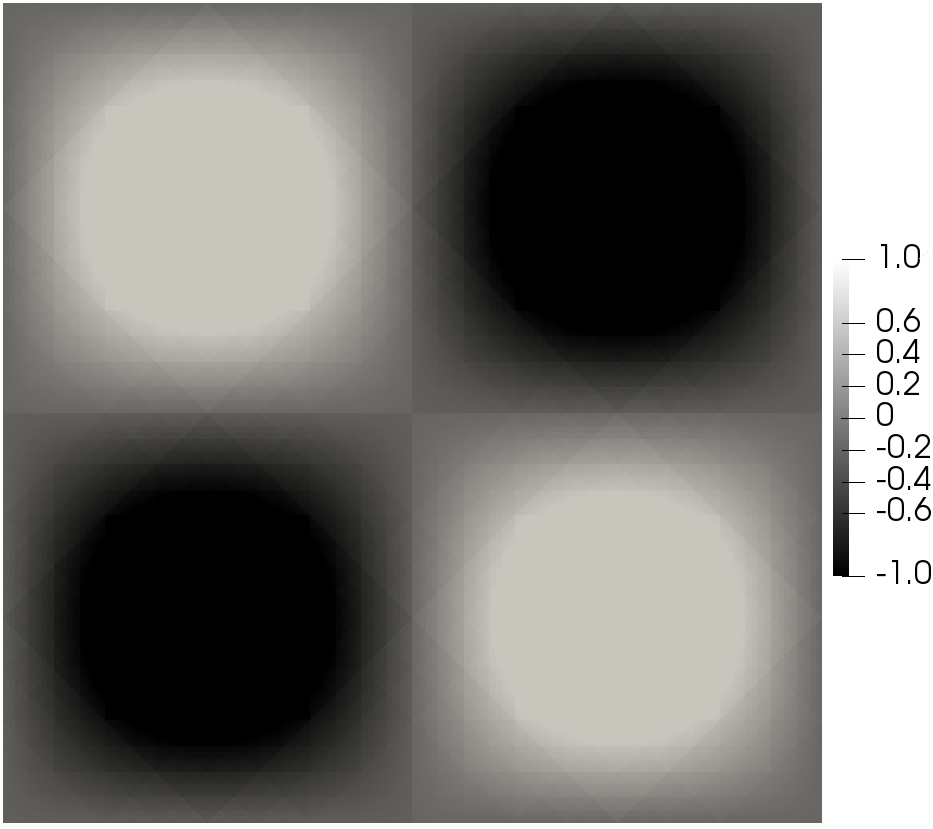}
    \caption{Example 2}  
  \end{subfigure}
   \begin{subfigure}[b]{0.32\linewidth}
  \centering
    \includegraphics[height = 3.4cm]{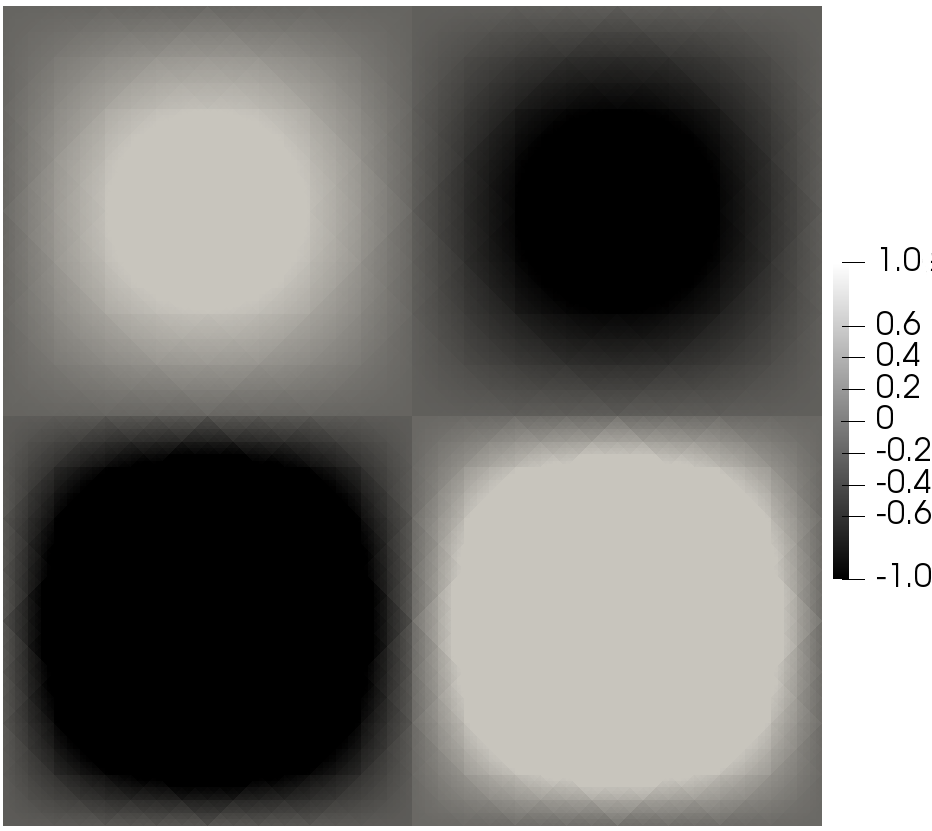}
    \caption{Example 3}  
  \end{subfigure}
  \caption{Reference solutions for strongly convex case.}
  \label{fig:reference solutions-SC}
\end{figure}

We observe behavior of the algorithm for a single run with $300$ iterations. To approximate objective function values, $m=1000$ samples are generated to get
$\hat{j}(u_h^n) = \tfrac{1}{m} \sum_{i=1}^m J(u_h^n,\xi^{n,i})$, where
$\xi^{n,i}$ denotes a newly generated $i^{\text{th}}$ sample at
iteration $n$. We set $\hat{\bar{j}}:=\hat{j}(u_h^{3000}).$ We observe
objective function decay and convergence rates $\lVert u_h^n - \bar{u}
\rVert_{\U}$  and $|\hat{j}(u_h^n) - \hat{\bar{j}}|$ for a single run
of the algorithm for each of the random fields; see
Figure~\ref{fig:SC-RF-1-convergence}--\ref{fig:SC-RF-3-convergence}. 
To approximate $\lVert u_h^n - \bar{u} \rVert_{\U}$, we project $u_h^n$ onto the fine mesh used for $\bar{u}$ and compute the error on the fine mesh. In each example, we see clear jumps in the objective function value when the mesh is refined, followed by decay at or better than the expected rate. A single run of 1000 iterations with mesh refinement took 36\% of the CPU time when compared to computations on a fixed mesh (corresponding to $h_{1000} \approx 0.011$).

\begin{figure}
\centering
  \begin{subfigure}[b]{0.32\linewidth}
  \centering
    \includegraphics[height = 2.8cm]{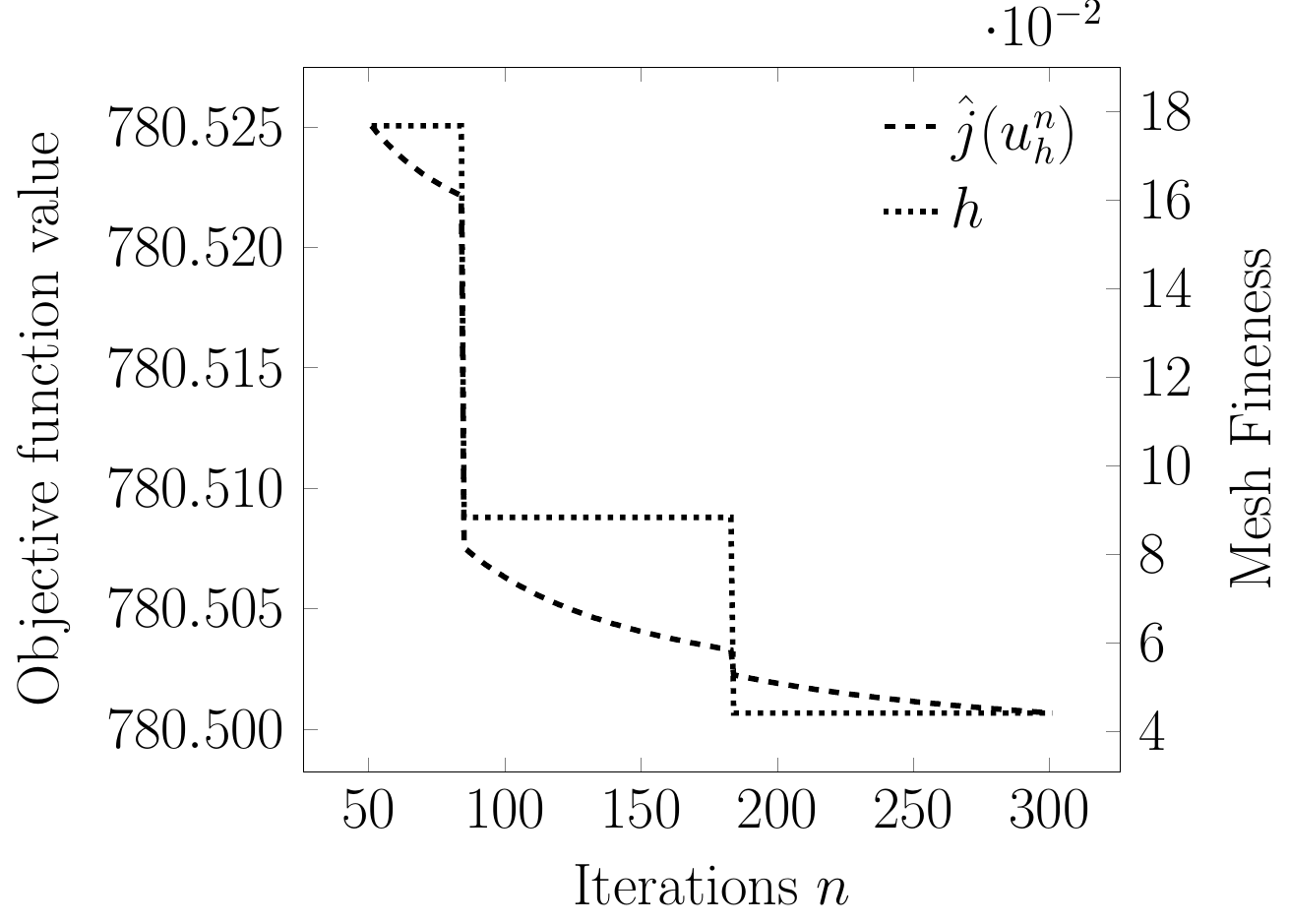}
    \caption{Objective function}  
  \end{subfigure}
   \begin{subfigure}[b]{0.32\linewidth}
  \centering
    \includegraphics[height = 2.8cm]{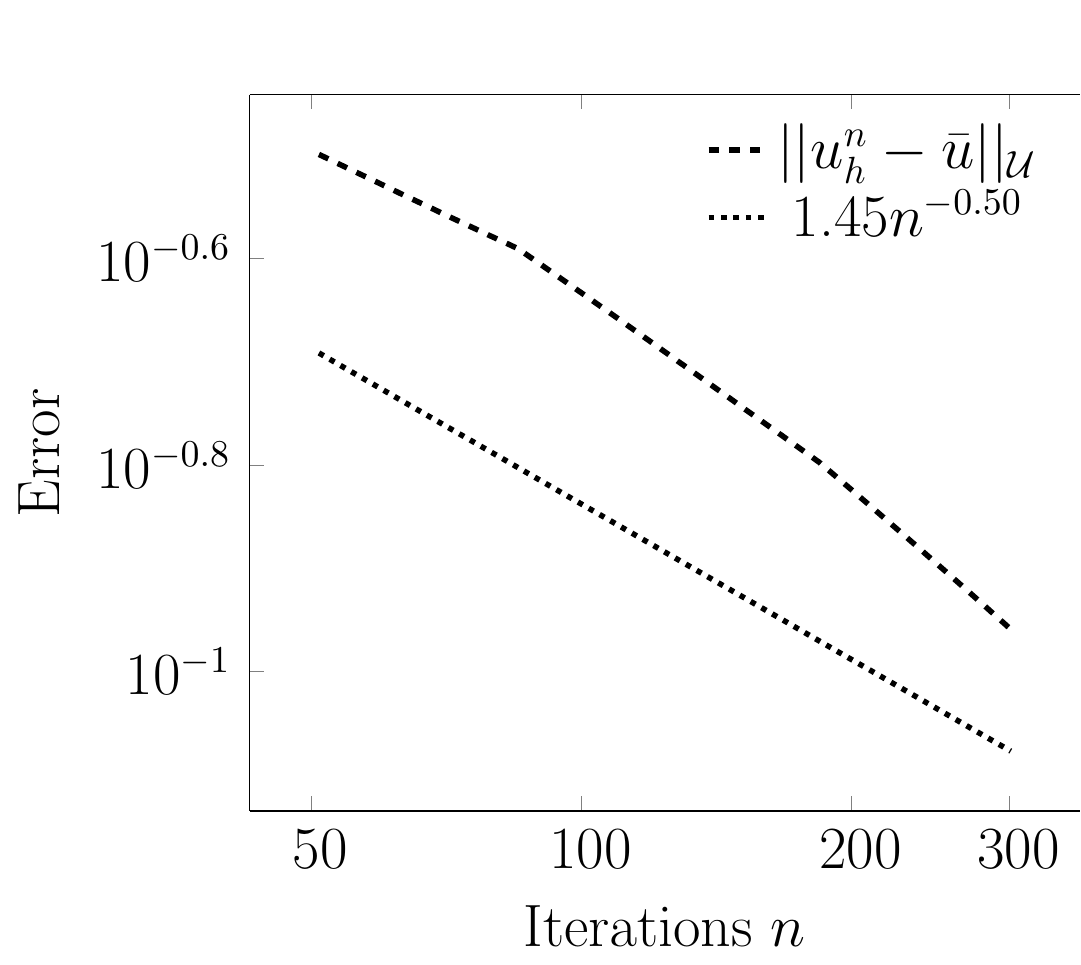}
    \caption{Error in iterates}  
  \end{subfigure}
 \begin{subfigure}[b]{0.32\linewidth}
  \centering
    \includegraphics[height = 2.8cm]{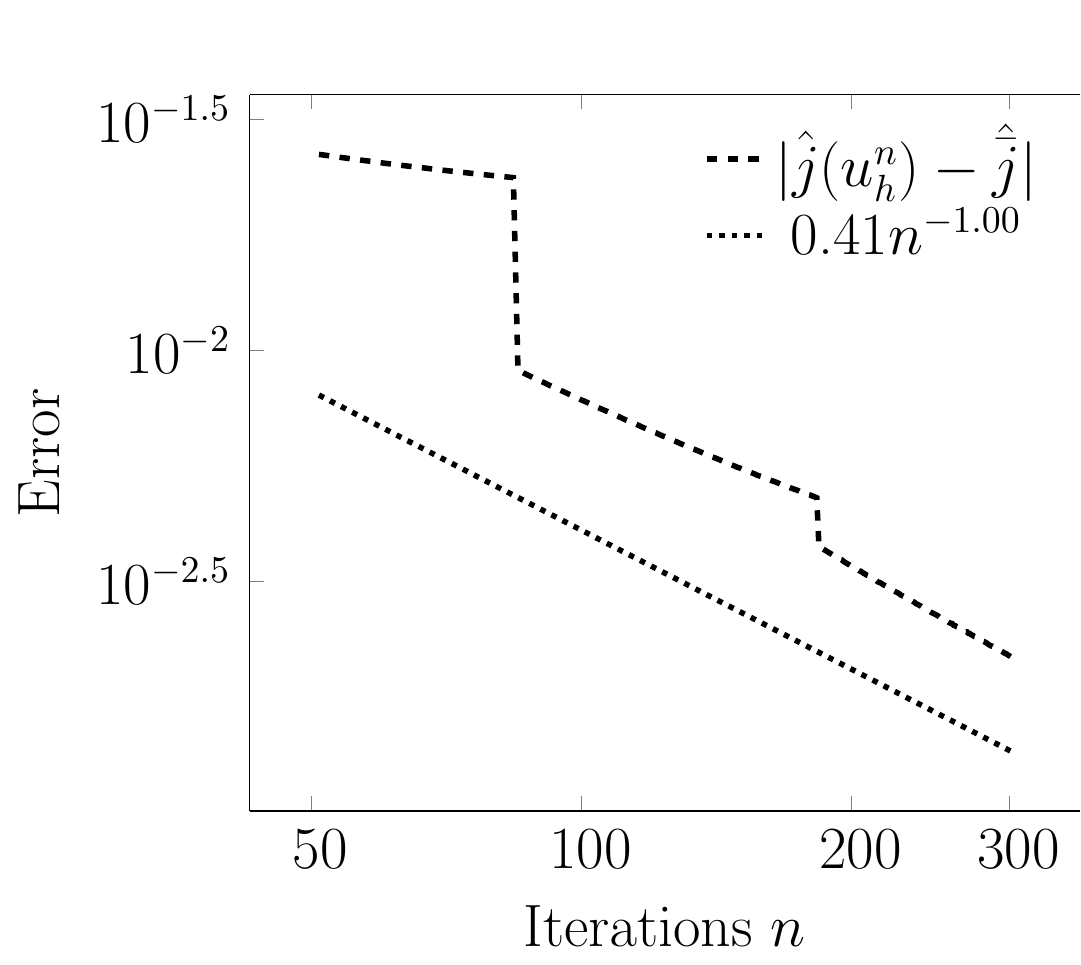}
    \caption{Error in objective function}  
  \end{subfigure}
  \caption{Strongly convex functional with smooth random field
    (example 1).}
  \label{fig:SC-RF-1-convergence}
\end{figure}

\begin{figure}
\centering
  \begin{subfigure}[b]{0.32\linewidth}
  \centering
    \includegraphics[height = 2.8cm]{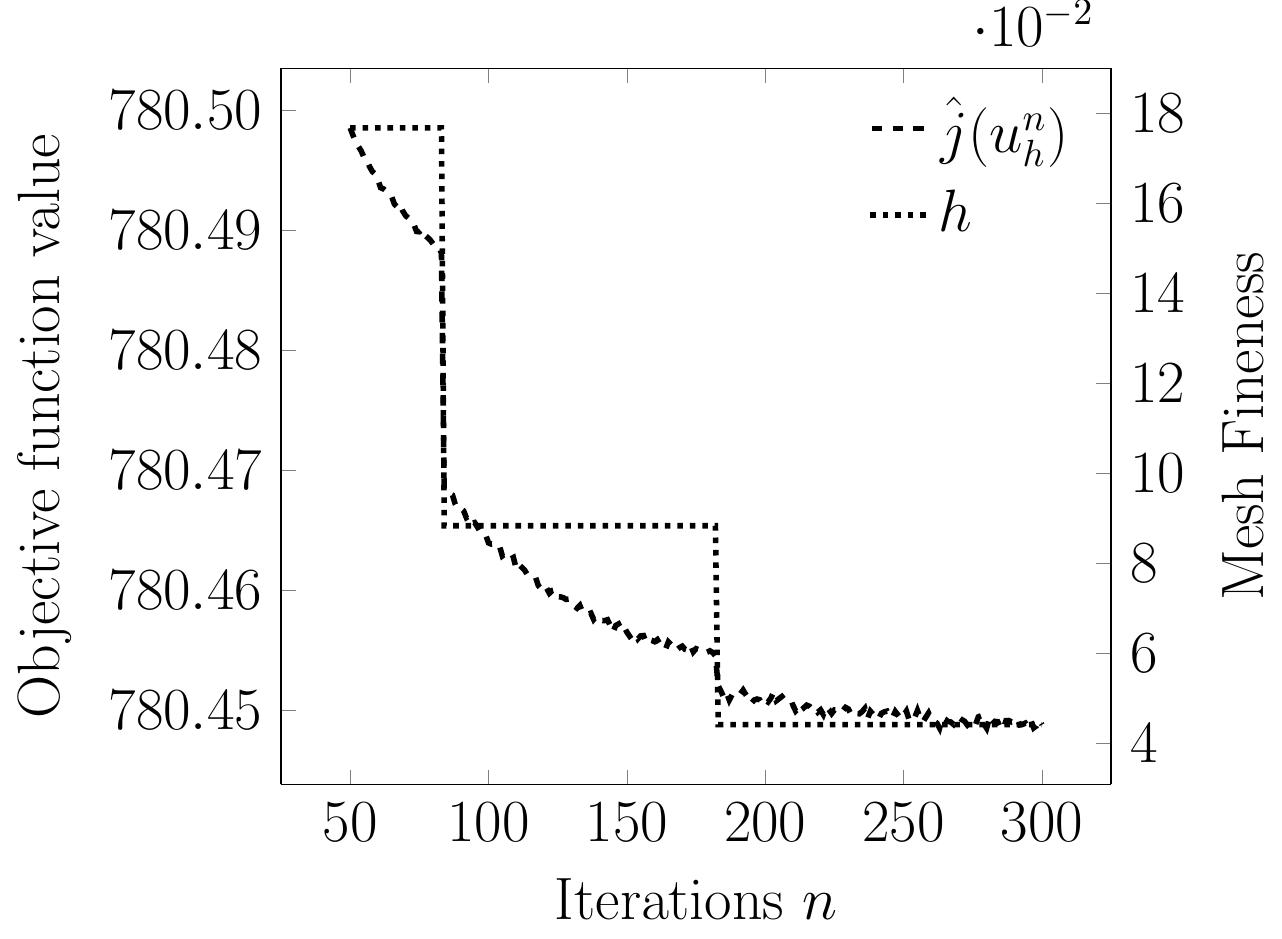}
    \caption{Objective function}  
  \end{subfigure}
   \begin{subfigure}[b]{0.32\linewidth}
  \centering
    \includegraphics[height = 2.8cm]{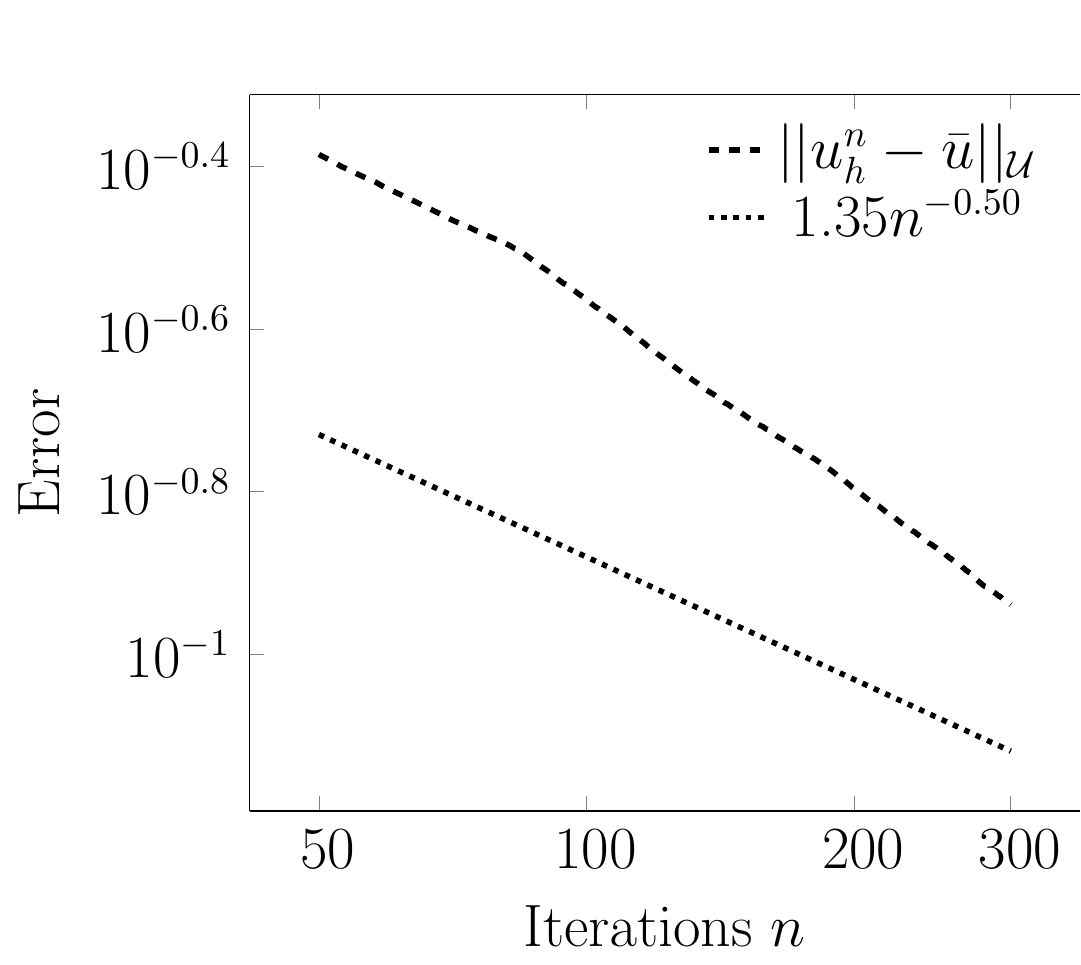}
    \caption{Error in iterates}  
  \end{subfigure}
 \begin{subfigure}[b]{0.32\linewidth}
  \centering
    \includegraphics[height = 2.8cm]{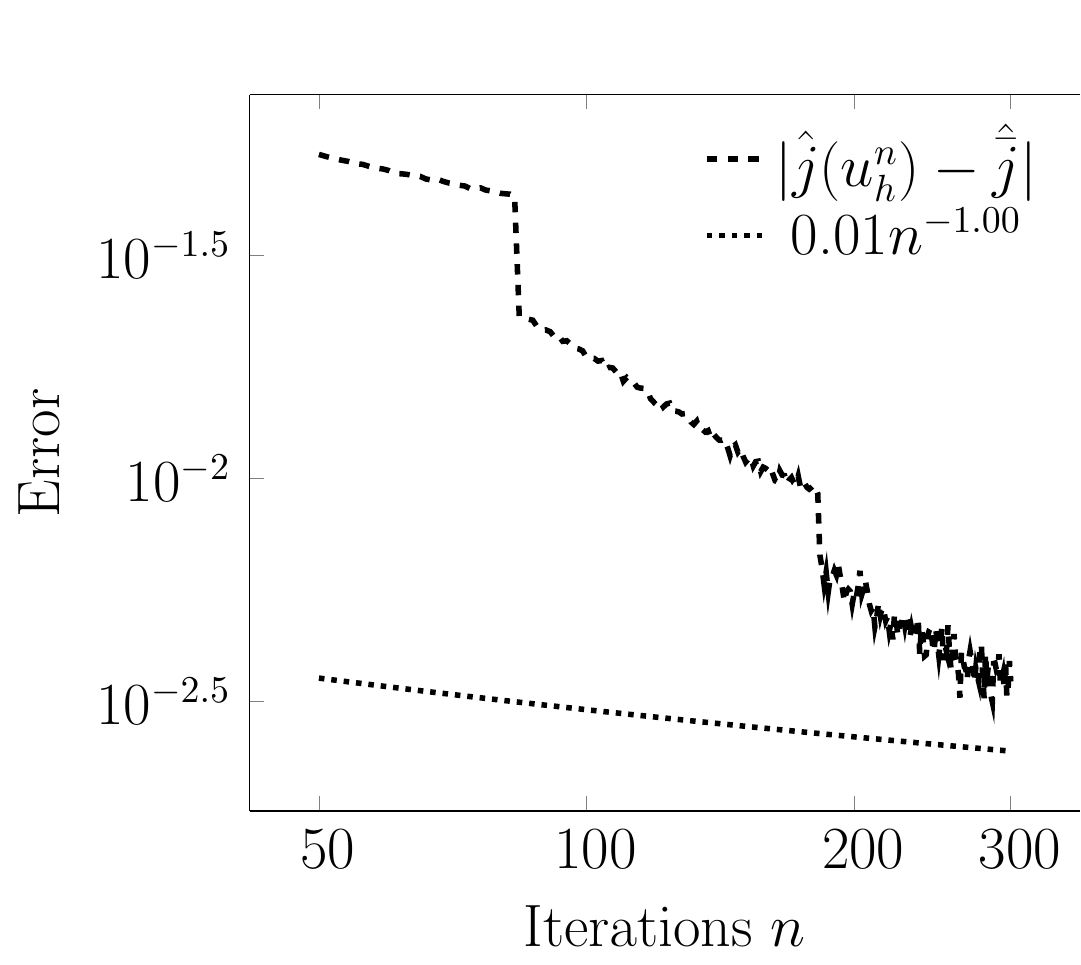}
    \caption{Error in objective function}  
  \end{subfigure}
  \caption{Strongly convex functional with log-normal random field (example 2).}
  \label{fig:SC-RF-2-convergence}
\end{figure}

\begin{figure}
\centering
  \begin{subfigure}[b]{0.32\linewidth}
  \centering
    \includegraphics[height = 2.8cm]{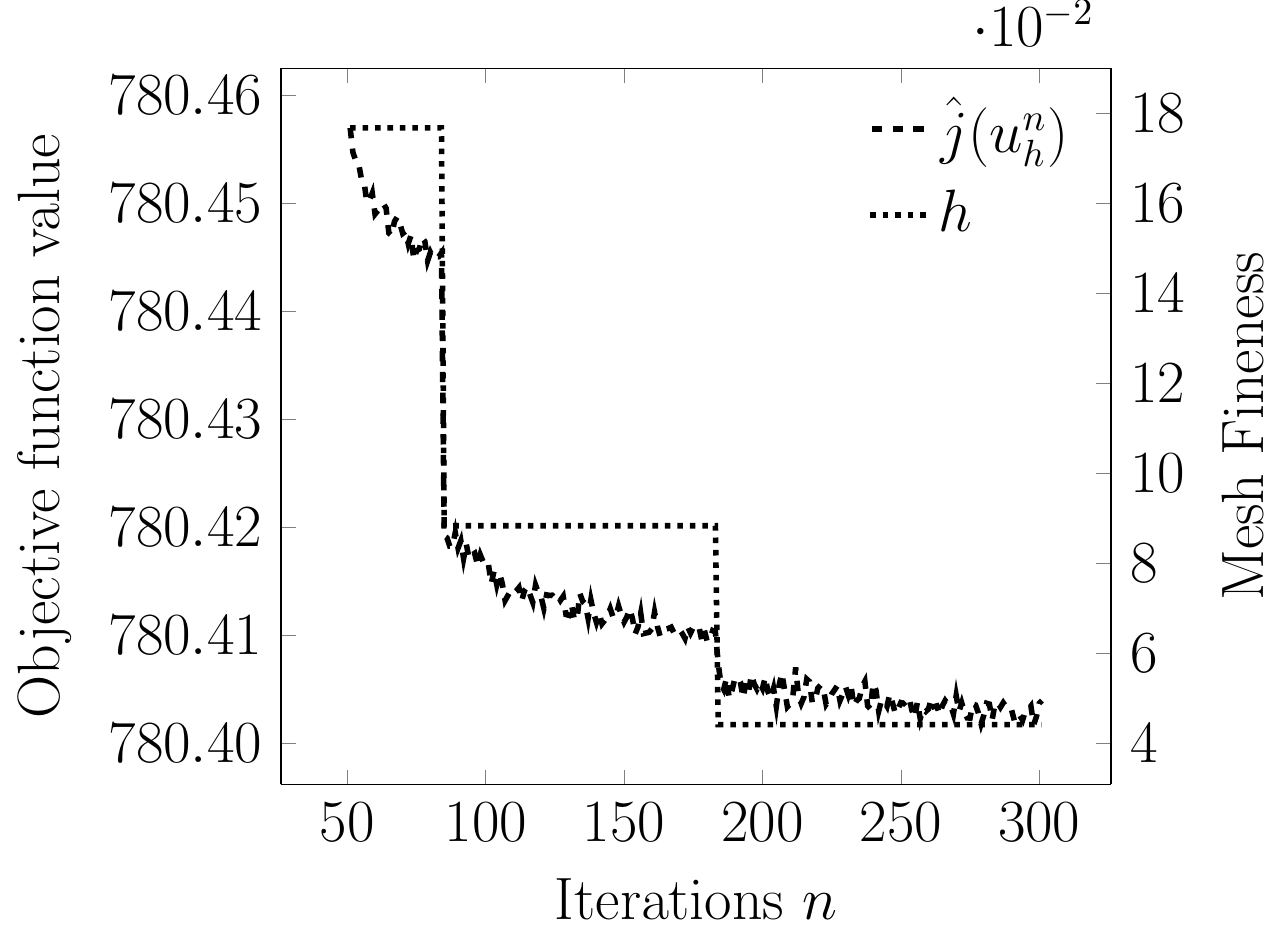}
    \caption{Objective function}  
  \end{subfigure}
   \begin{subfigure}[b]{0.32\linewidth}
  \centering
    \includegraphics[height = 2.8cm]{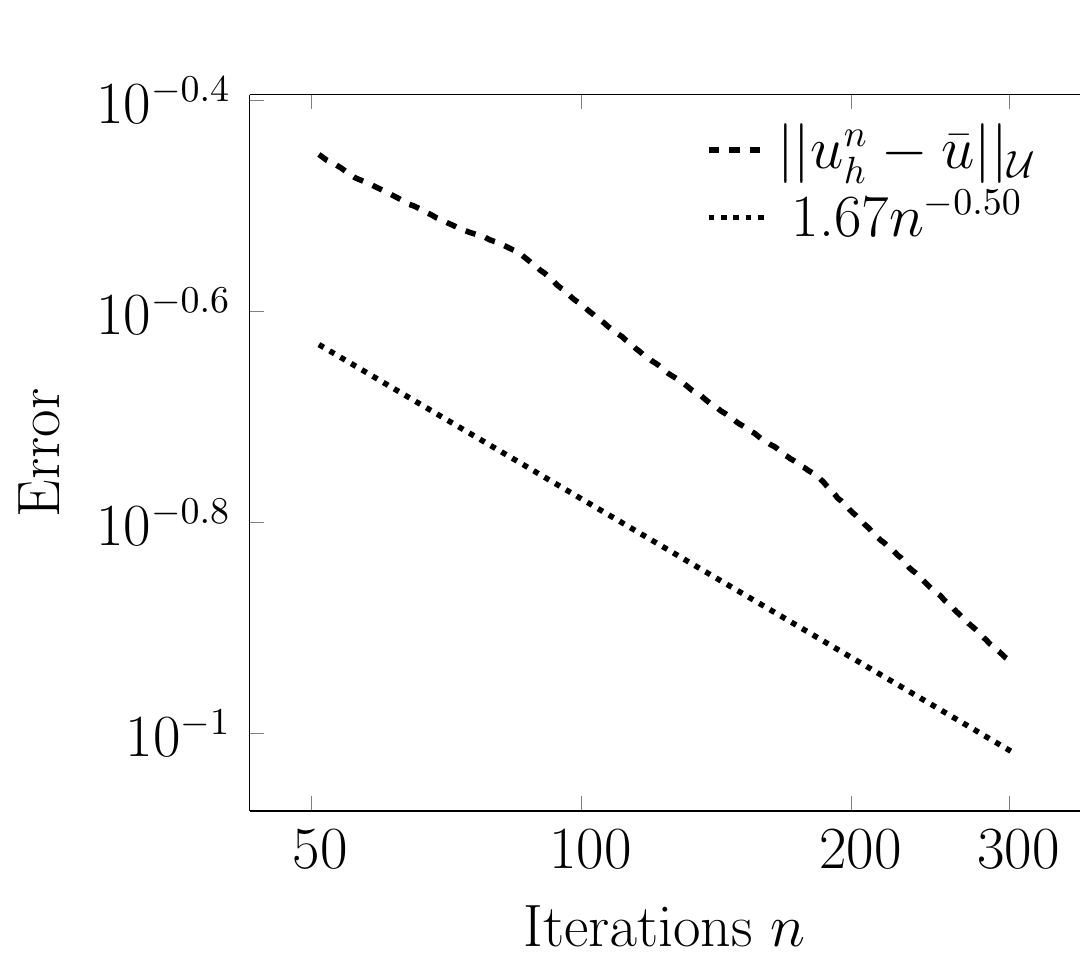}
    \caption{Error in iterates}  
  \end{subfigure}
 \begin{subfigure}[b]{0.32\linewidth}
  \centering
    \includegraphics[height = 2.8cm]{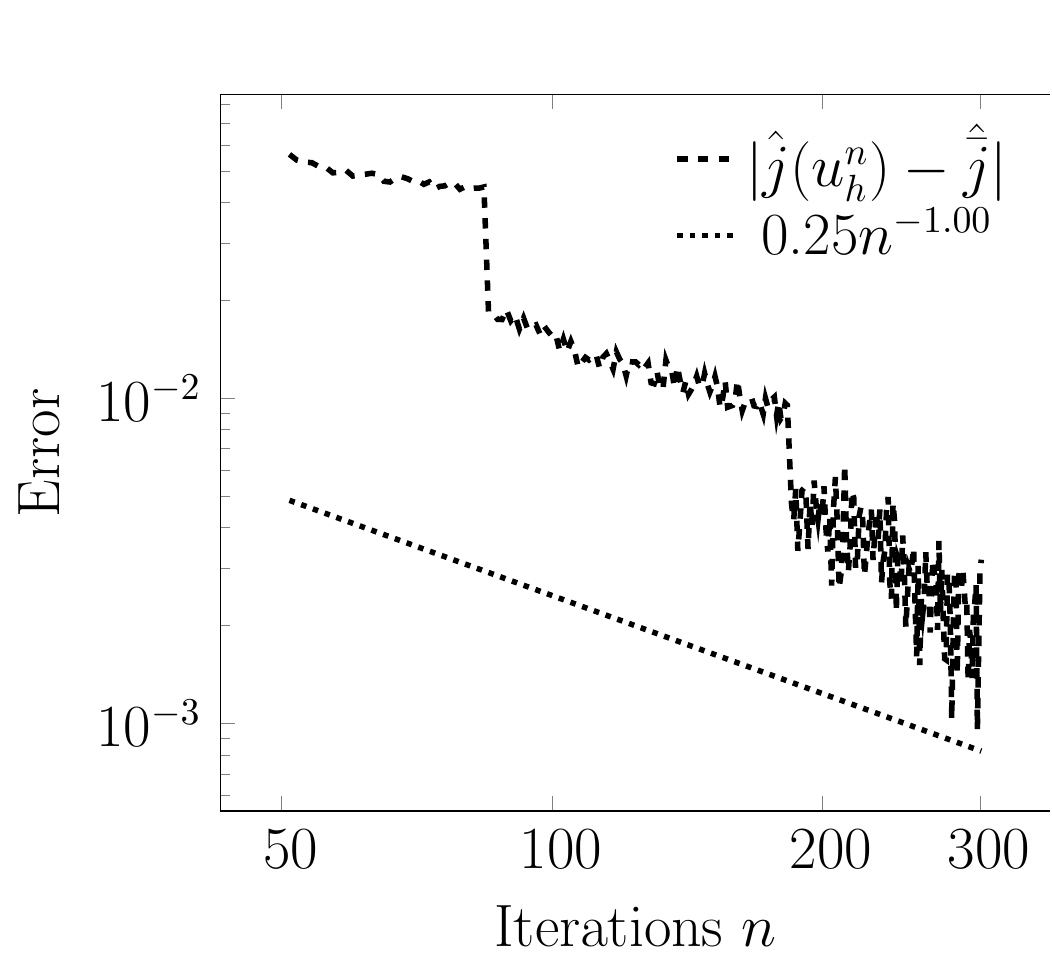}
    \caption{Error in objective function}  
  \end{subfigure}
 
  \caption{Strongly convex functional with piecewise constant random field (example 3).}
  \label{fig:SC-RF-3-convergence}
\end{figure}

\paragraph{Convergence Plots - Averaging}
For the general convex case, we choose the convex example with the modified constraint \eqref{eq:modified-PDE-constraint}. We denote the discretization of the average of iterates $i$ to $N$ $\tilde{u}_{i}^{N}$, defined in  \eqref{eq:averagediterates}, as $\tilde{u}_{i,h}^N.$  We note that the bound on the second moment of the stochastic gradient $M$ can be analytically computed as in \cite{Geiersbach2018} by 
$M=\lVert G(u,\xi) \rVert_{\U}^2 \leq [C(\lVert y^D \rVert_{\U} + C(\lVert u \rVert_{\U} +\lVert e^D\rVert_{\U}))]^2$ with $C = C_p^2/a_{\min}$, where $C_p$ is the Poincar\'e constant, which can be bounded by $\text{diam}(D)/\pi = \sqrt{2}/\pi$ \cite{Payne1960}. Note that $\lVert y^D\rVert_{\U}^2=5/2$, $\lVert e^D\rVert_{\U}^2=1+9\pi^4$ and $\lVert u \rVert_{\U} \leq 1$ for all $u \in \U$.  In addition, for example 1, $a_{\min} \approx 3.55$; for example 2, $a_{\min} \approx 2.72$; for example 3, $a_{\min} = 1$.

To generate reference solutions, the algorithm is run with the variable step size rule \eqref{eq:convex-bias-variable-stepsize-rule} with $\theta = 50$ for $n=5000$ iterations with $h_{5000} \approx 0.0055$ and $\alpha=0.1$ for the averaging factor to get $\bar{u}=\tilde{u}_{4500, h}^{5000}$; see Figure~\ref{fig:reference solutions-C} for the solutions for each random field. To approximate objective function values, $m=5000$ samples were generated to get $\hat{j}(\tilde{u}_{\lceil \alpha N \rceil, h}^N) = \tfrac{1}{m} \sum_{i=1}^m J(\tilde{u}_{\lceil \alpha N \rceil, h}^N,\xi^{n,i})$, where $\xi^{n,i}$ denotes a newly generated $i^{\text{th}}$ sample at iteration $n$. We set $\hat{\bar{j}}:=\hat{j}(\bar{u})$ and use $\alpha = 0.5$ for the experiments. We choose a fixed number of iterations $N \in \{25, 50, \dots, 250\}$ and for each of these iteration numbers, we ran a separate simulation using the step sizes and mesh refinement rules informed by \eqref{eq:convex-bias-constant-stepsize-rule} and \eqref{eq:convex-bias-variable-stepsize-rule}. To prevent the mesh from refining too quickly, we choose $c=2$. For the variable step size rule \eqref{eq:convex-bias-variable-stepsize-rule} we use $\theta=1.$ Plots of convergence for example 1 and example 2 are shown in Figure~\ref{fig:C-RF-1-convergence}--Figure~\ref{fig:C-RF-2-convergence}. Again we see agreement with the theory, with clear jumps when the mesh is refined, both with constant and variable step sizes. We also note that positive jumps in the objective function value are possible when the mesh is refined, as seen in Figure~\ref{fig:C-RF-2-convergence}--Figure~\ref{fig:C-RF-3-convergence}. 
For the third example, we modified the random field so that we can
view the effect of reduced regularity more clearly; we used $\xi \sim
U(5,5.1)$ and $U(1,1.1)$. In
Figure~\ref{fig:C-RF-3-convergence}--Figure~\ref{fig:C-RF-3-convergence},
we see a decrease in convergence rate, which could be caused by missing regularity due to the jump
discontinuity in the random field as mentioned in
Remark~\ref{rem:meshrefinement-lower-regularity}. We reran the experiment
with the guess $\min(2s,t,1)=0.5$, which results in a more aggressive
mesh refinement and convergence according to the theory; see
Figure~\ref{fig:C-RF-3-convergence-c2}. In all examples, the variable step size yields a lower error for the same number of iterations when compared to the constant step size rule.

\begin{figure}
\centering
  \begin{subfigure}[b]{0.32\linewidth}
  \centering
    \includegraphics[height = 3.5cm]{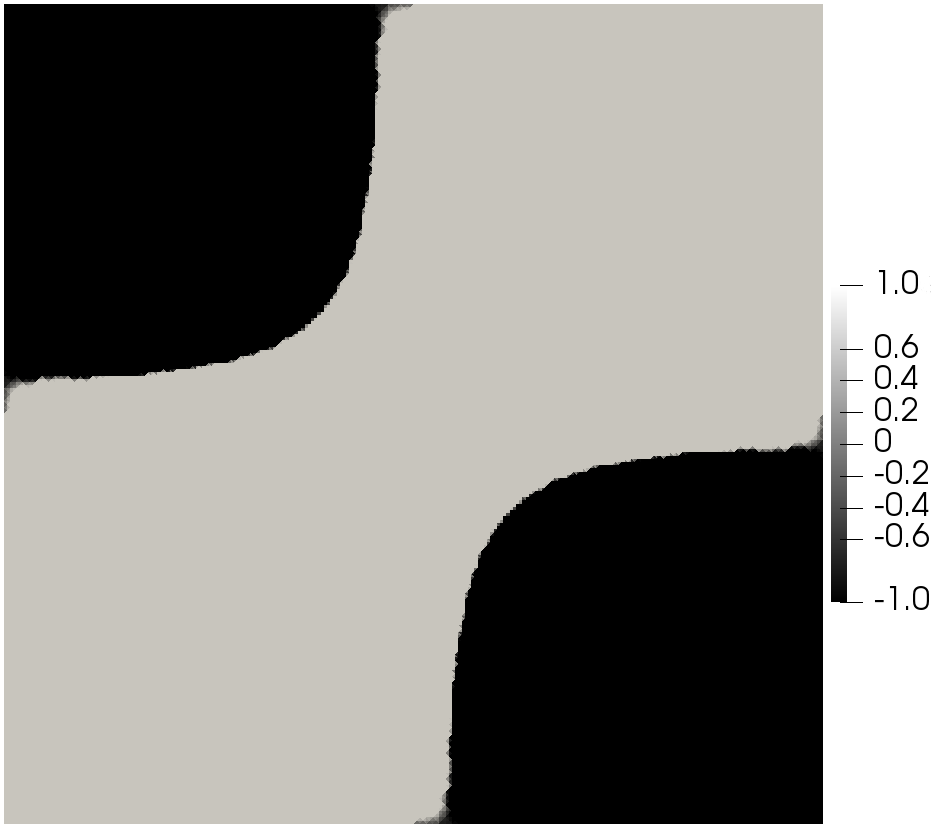}
    \caption{Example 1}  
  \end{subfigure}
  \begin{subfigure}[b]{0.32\linewidth}
  \centering
    \includegraphics[height = 3.5cm]{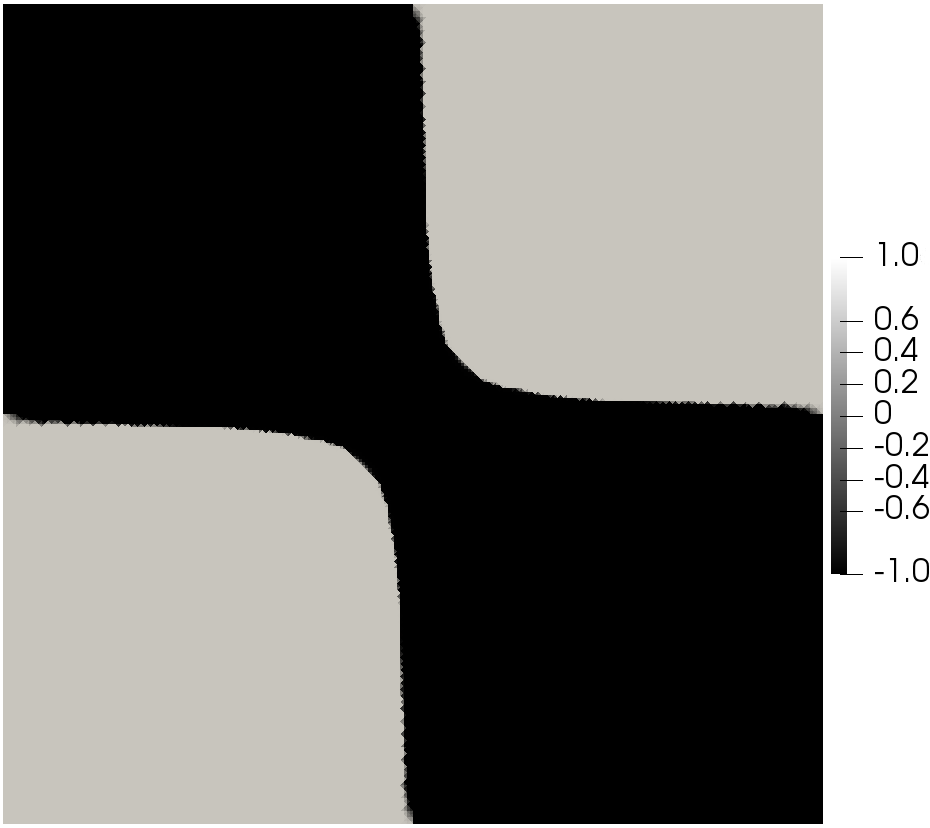}
    \caption{Example 2}  
  \end{subfigure}
   \begin{subfigure}[b]{0.32\linewidth}
  \centering
    \includegraphics[height = 3.5cm]{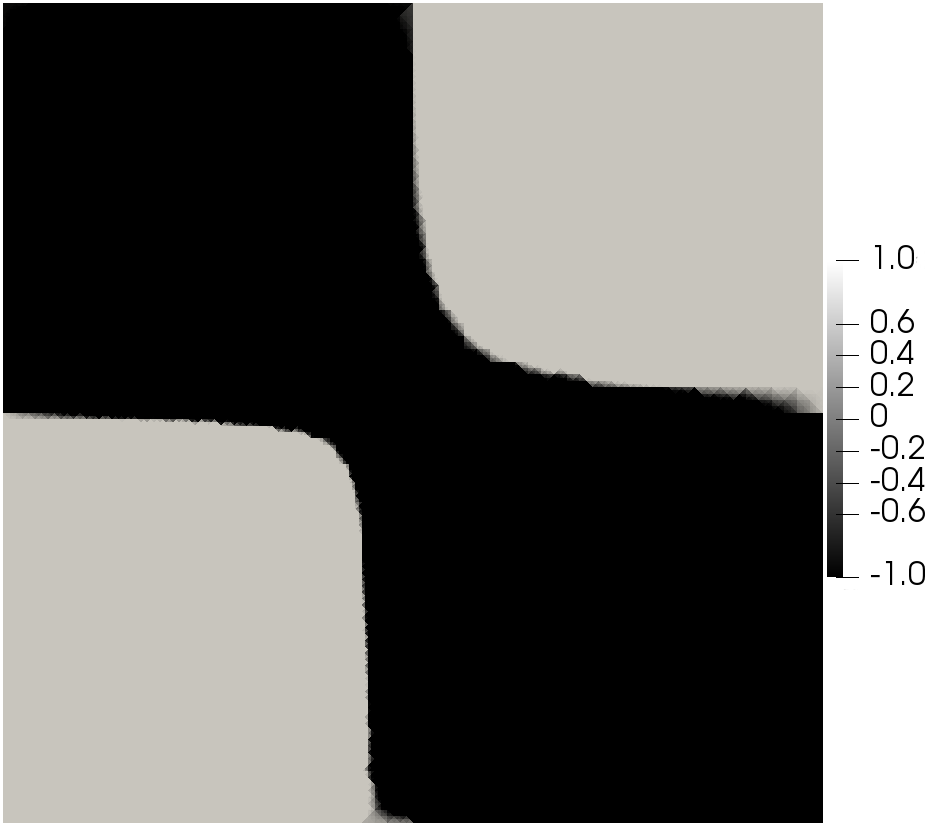}
    \caption{Example 3}  
  \end{subfigure}
  \caption{Reference solutions for general convex case.}
  \label{fig:reference solutions-C}
\end{figure}

\begin{figure}
\centering
  \begin{subfigure}[b]{0.52\linewidth}
  \centering
    \includegraphics[height = 4.0cm]{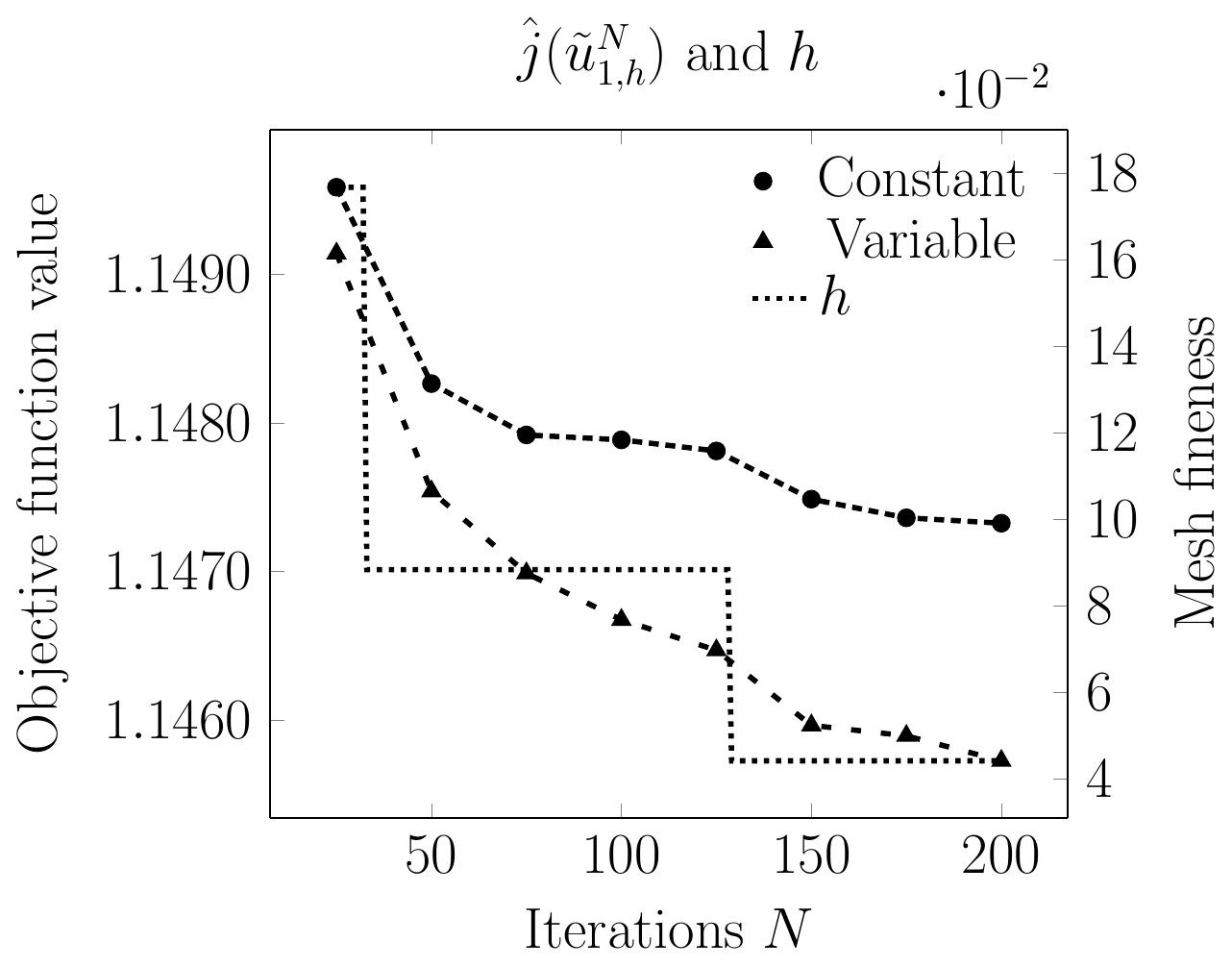}
    \caption{Objective function and mesh fineness}  
  \end{subfigure}
 \begin{subfigure}[b]{0.45\linewidth}
  \centering
    \includegraphics[height = 4.0cm]{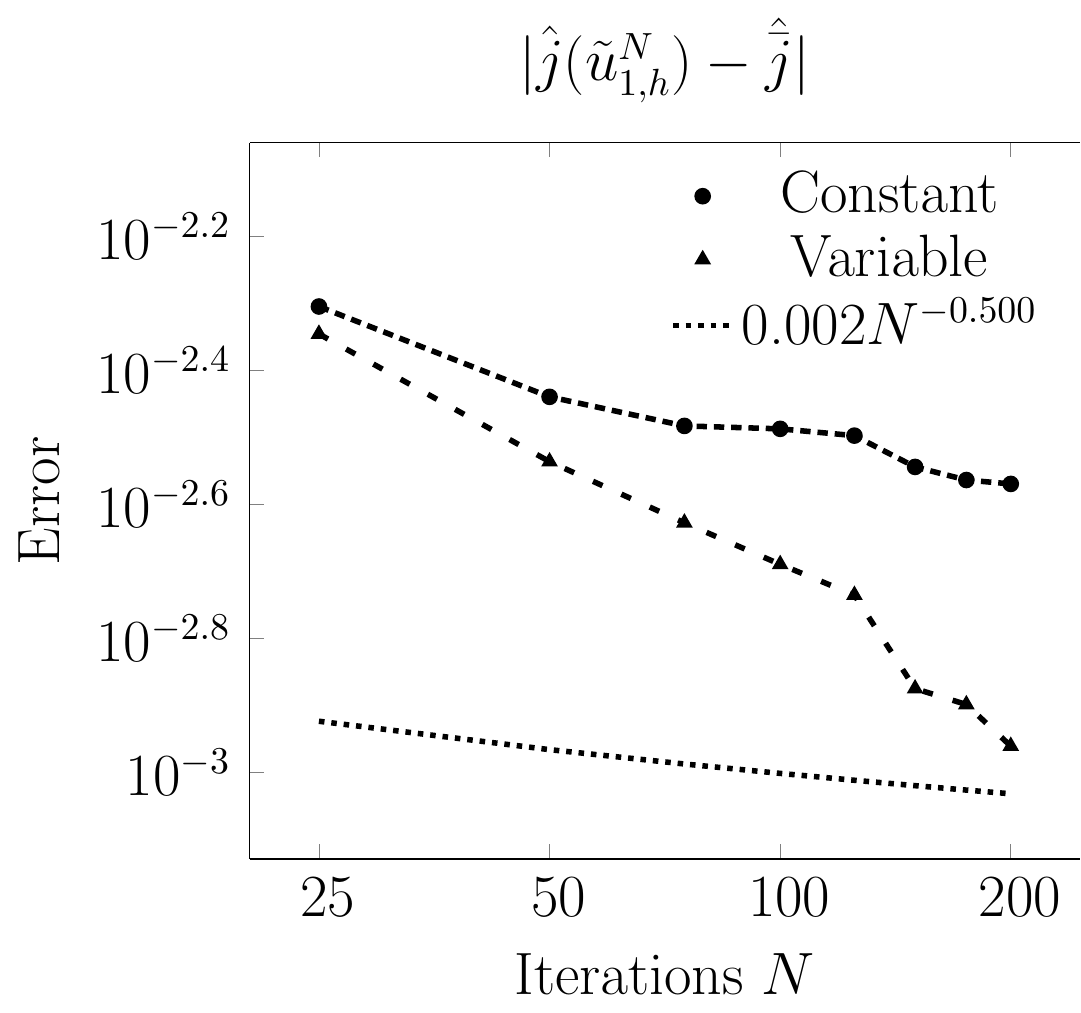}
    \caption{Error in objective function}  
  \end{subfigure}
  \caption{General convex functional with smooth random field (example
    1) using constant and variable step size rules.}
  \label{fig:C-RF-1-convergence}
\end{figure}

\begin{figure}
\centering
  \begin{subfigure}[b]{0.52\linewidth}
  \centering
    \includegraphics[height = 4.0cm]{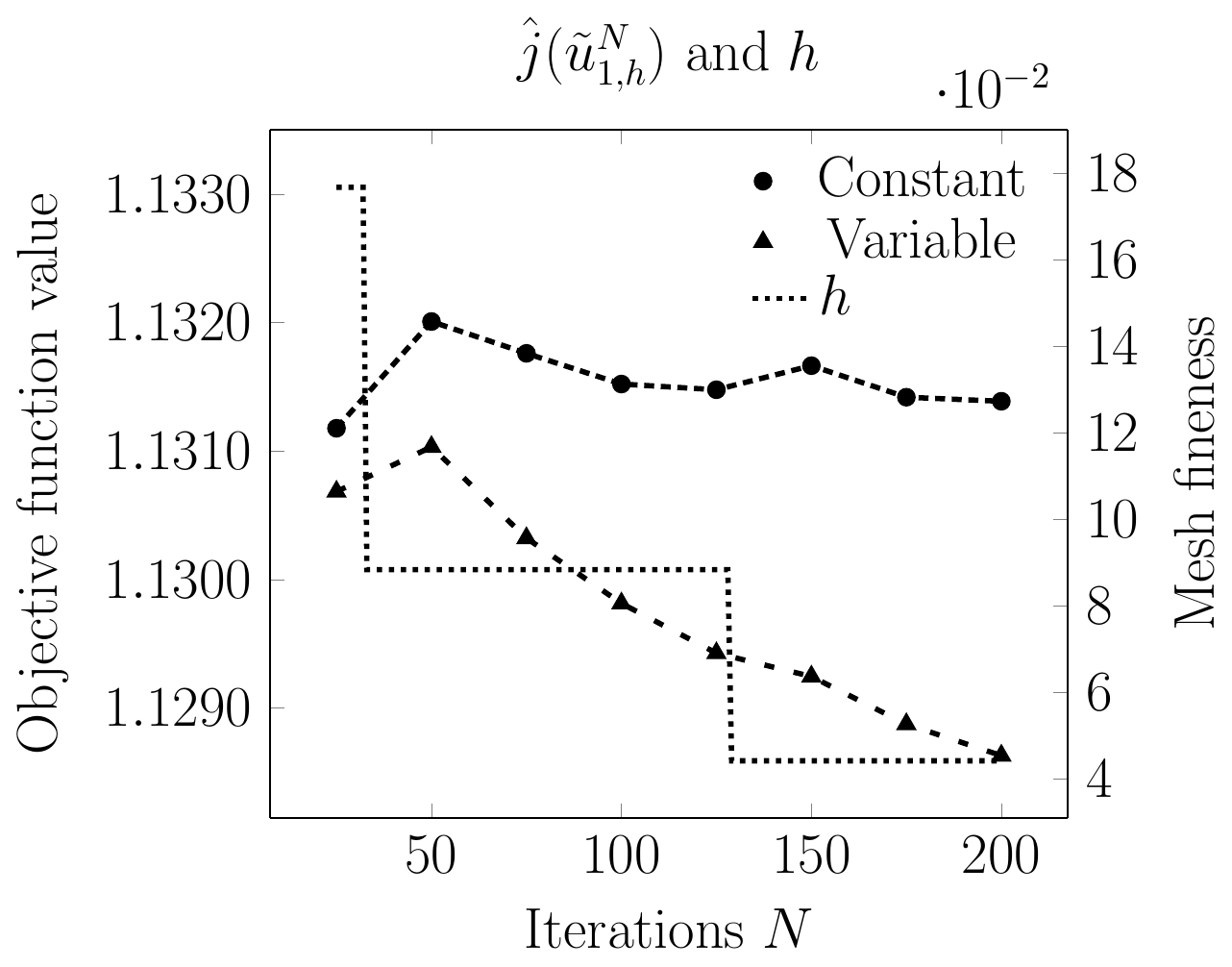}
    \caption{Objective function and mesh fineness}  
  \end{subfigure}
 \begin{subfigure}[b]{0.45\linewidth}
  \centering
    \includegraphics[height = 4.0cm]{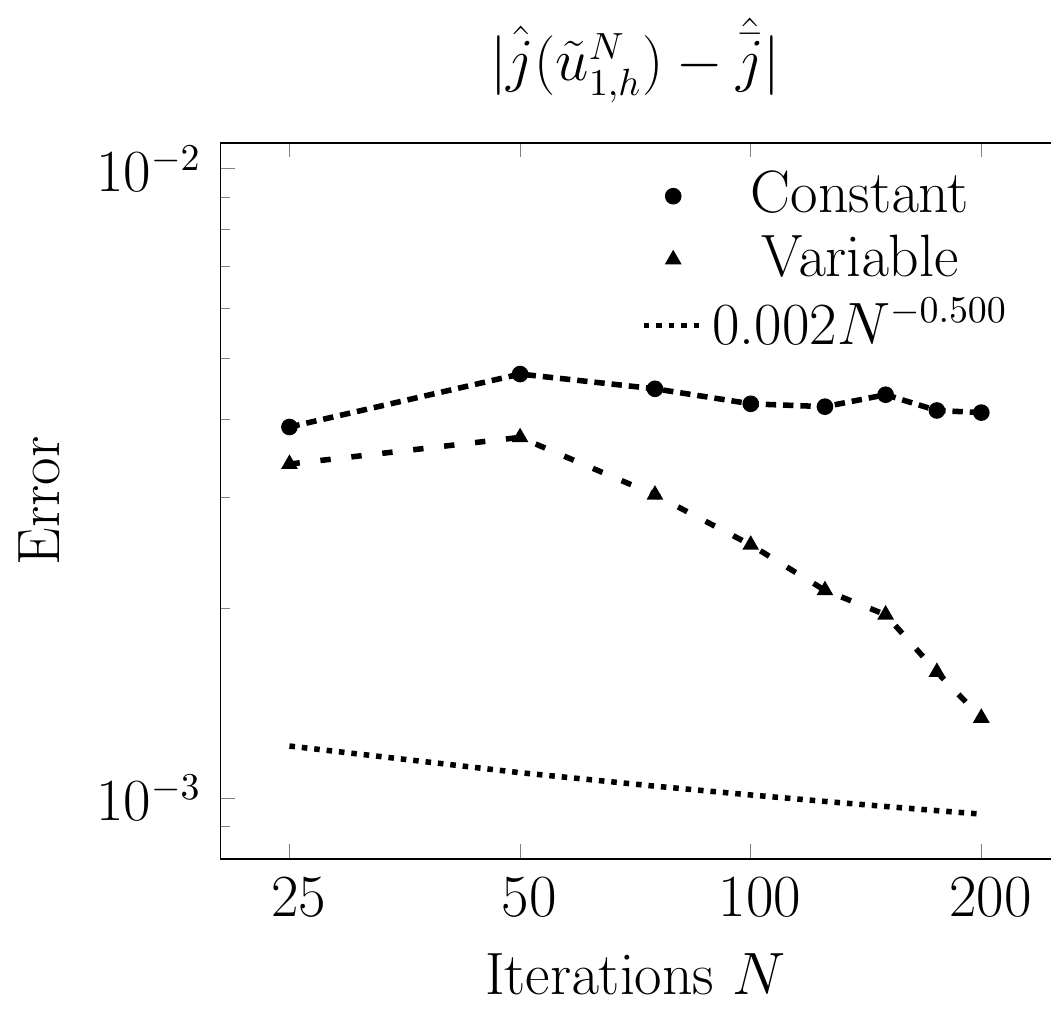}
    \caption{Error in objective function}  
  \end{subfigure}
  \caption{General convex functional with log-normal random field (example 2) using constant and variable step size rules.}
  \label{fig:C-RF-2-convergence}
\end{figure}

\begin{figure}
\centering
  \begin{subfigure}[b]{0.52\linewidth}
  \centering
    \includegraphics[height = 4.0cm]{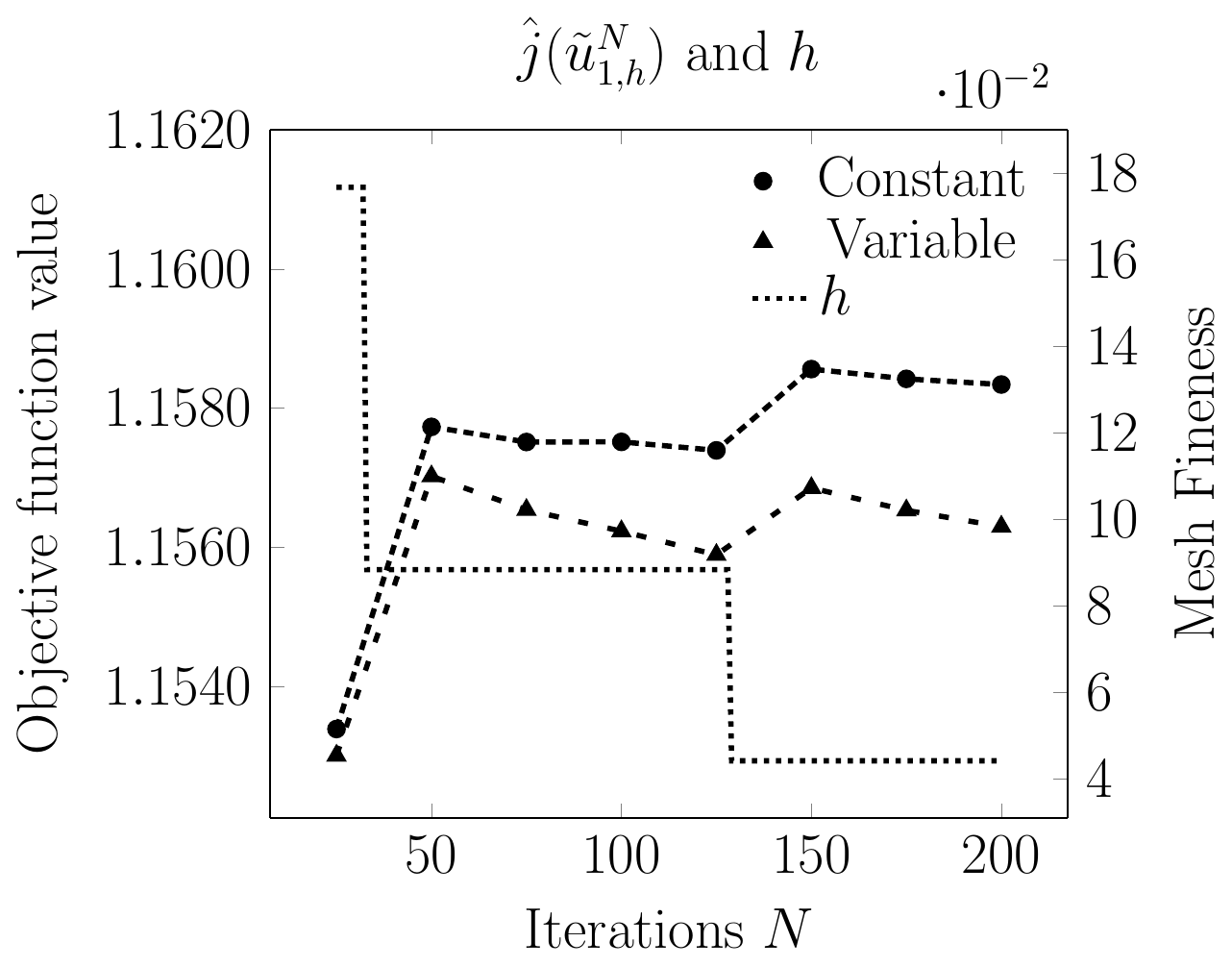}
    \caption{Objective function and mesh fineness}  
  \end{subfigure}
 \begin{subfigure}[b]{0.45\linewidth}
  \centering
    \includegraphics[height = 4.0cm]{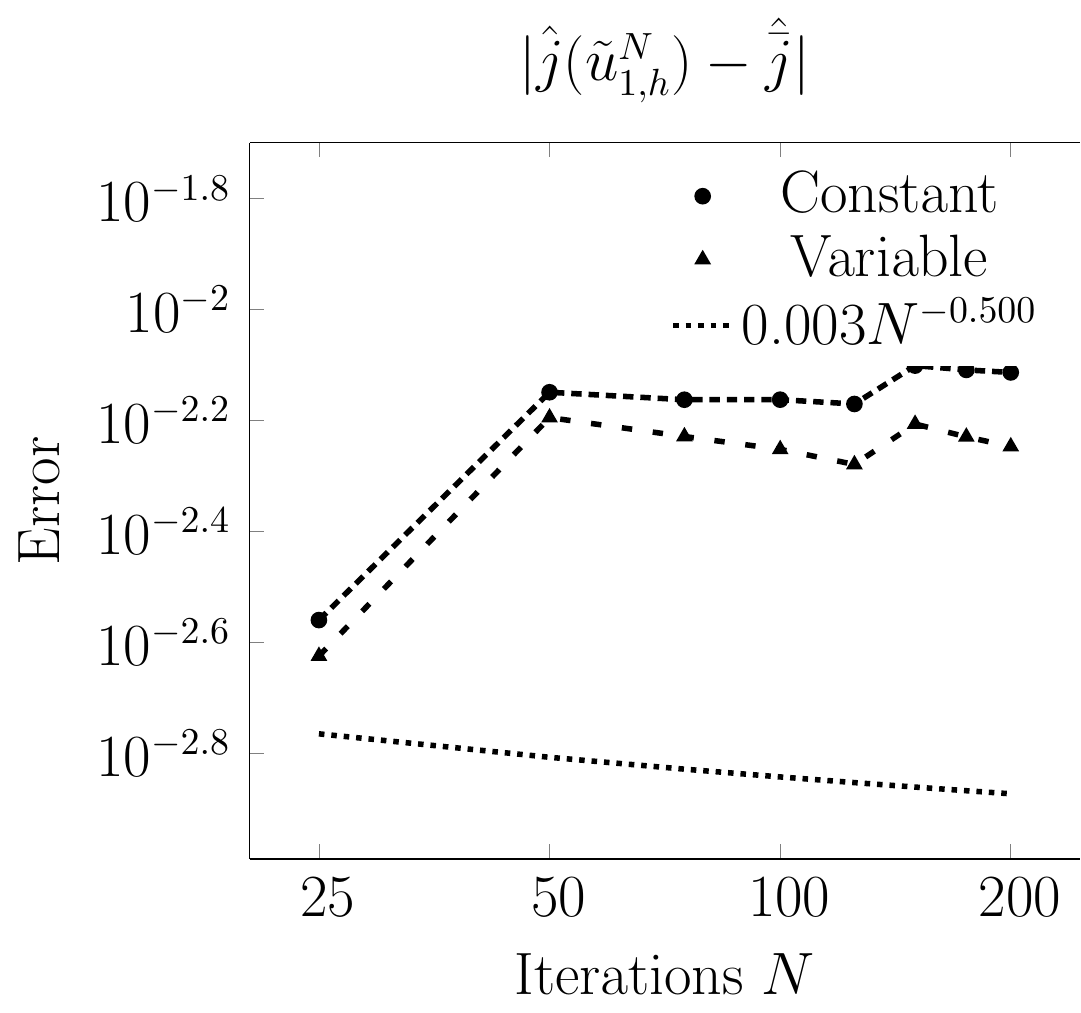}
    \caption{Error in objective function}  
  \end{subfigure}
 
  \caption{General convex functional with piecewise constant random field (example 3) using constant and variable step size rules.}
  \label{fig:C-RF-3-convergence}
\end{figure}

\begin{figure}
\centering
  \begin{subfigure}[b]{0.52\linewidth}
  \centering
    \includegraphics[height = 4.0cm]{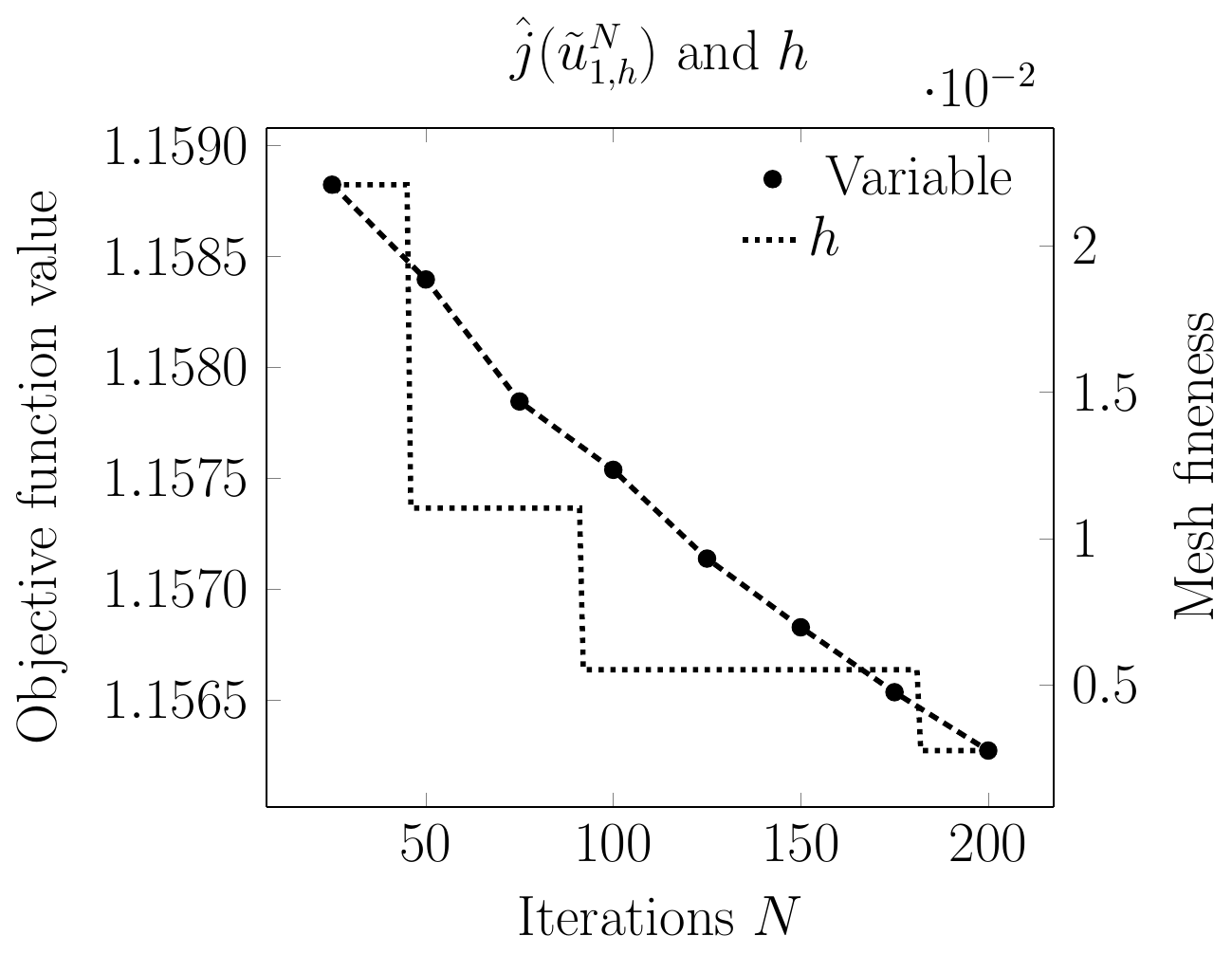}
    \caption{Objective function and mesh fineness}  
  \end{subfigure}
 \begin{subfigure}[b]{0.45\linewidth}
  \centering
    \includegraphics[height = 4.0cm]{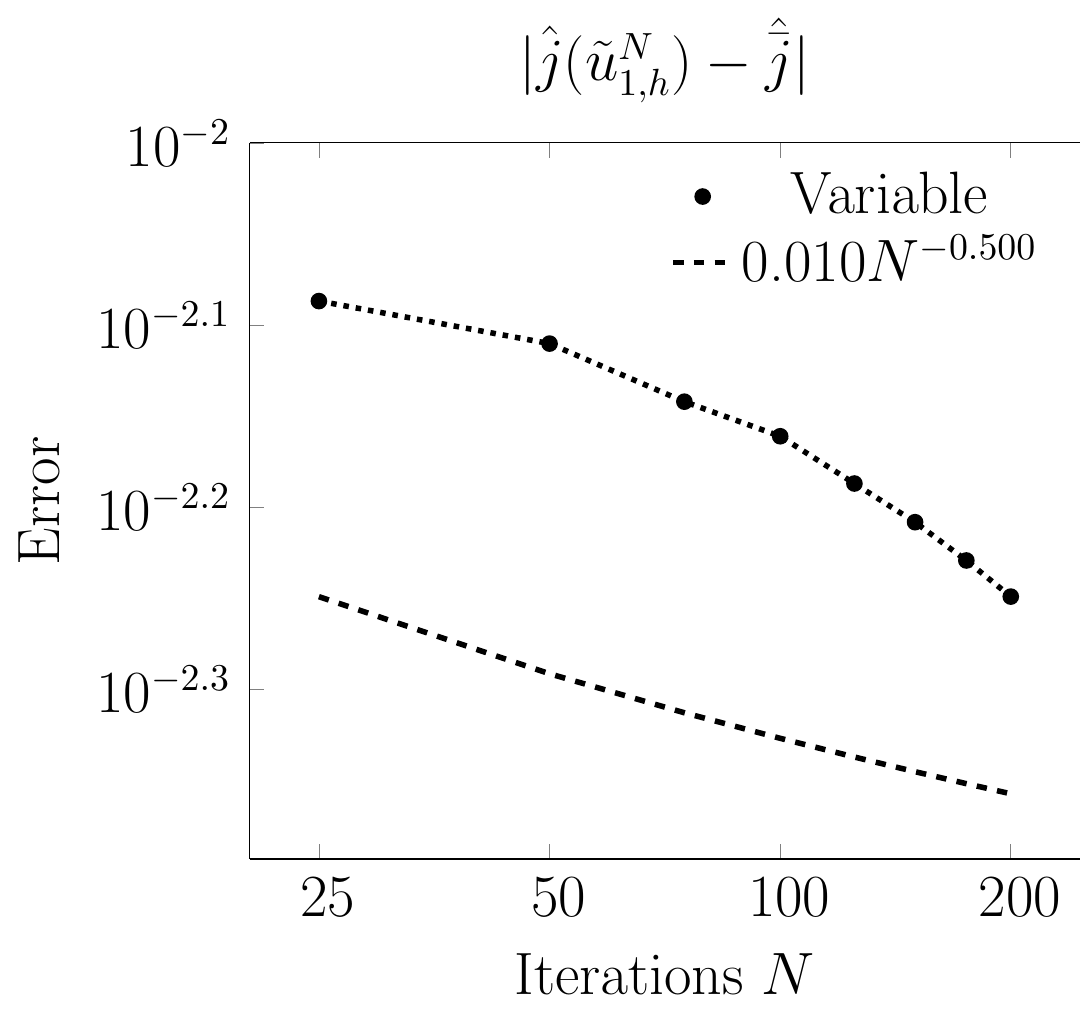}
    \caption{Error in objective function}  
  \end{subfigure}
  \caption{General convex functional with piecewise constant random field (example 3) and $\min(2s,t,1)=0.5$ using variable step size rules.}
  \label{fig:C-RF-3-convergence-c2}
\end{figure}

\section{Conclusion}
\label{section:conclusion}
In this paper, we developed efficiency estimates incorporating numerical error for the projected stochastic gradient algorithm applied to stochastic optimization problems in Hilbert spaces. We distinguish between a strongly convex functional and a general convex case, where in the latter case we use averaging to allow for larger step sizes. These estimates informed how to balance the error and step size rules for both the strongly convex case and the convex case with averaging. We introduced a model stochastic optimization problem with a PDE constraint subject to uncertain coefficients. Using a priori error estimates for the PDE constraint, we developed a mesh refinement strategy that, coupled with reducing step sizes, yields convergence rates according to our efficiency estimates. This was demonstrated using three different random fields on problems with and without a regularization term, which allowed us to test our convergence theory on a strongly convex and general convex objective function.

\bibliographystyle{siamplain}
\bibliography{references}
\end{document}